\numberwithin{equation}{section}
\newtheorem{theorem}{Theorem}[section]
\newtheorem{lemma}[theorem]{Lemma}
\newtheorem{proposition}[theorem]{Proposition}
\newtheorem{cor}[theorem]{Corollary}
\newtheorem{rem}[theorem]{Remark}
\newcommand{\ind}{\mathbf{1}}
\renewcommand{\tilde}{\widetilde}
\newcommand{\cF}{{\ensuremath{\mathcal F}} }
\newcommand{\cG}{{\ensuremath{\mathcal G}} }
\newcommand{\cE}{{\ensuremath{\mathcal E}} }
\newcommand{\cH}{{\ensuremath{\mathcal H}} }
\newcommand{\cK}{{\ensuremath{\mathcal K}} }
\newcommand{\cM}{{\ensuremath{\mathcal M}} }
\newcommand{\cL}{{\ensuremath{\mathcal L}} }
\newcommand{\cU}{{\ensuremath{\mathcal U}} }
\newcommand{\cV}{{\ensuremath{\mathcal V}} }
\newcommand{\cR}{{\ensuremath{\mathcal R}} }
\DeclareMathSymbol{\leqslant}{\mathalpha}{AMSa}{"36} % nicer `smaller or equal'
\DeclareMathSymbol{\geqslant}{\mathalpha}{AMSa}{"3E} % nicer `larger or equal'
\DeclareMathSymbol{\eset}{\mathalpha}{AMSb}{"3F}     % nicer `emptyset'
\renewcommand{\leq}{\;\leqslant\;}                   % redef. of < or =
\renewcommand{\geq}{\;\geqslant\;}                   % redef. of > or =
\newcommand{\dd}{\,\text{\rm d}}             % a straight d for differentials
\newcommand{\bbE}{{\ensuremath{\mathbb E}} }
\newcommand{\bbL}{{\ensuremath{\mathbb L}} }
\newcommand{\bbN}{{\ensuremath{\mathbb N}} }
\newcommand{\bbP}{{\ensuremath{\mathbb P}} }
\newcommand{\bbR}{{\ensuremath{\mathbb R}} }
\newcommand{\bbS}{{\ensuremath{\mathbb S}} }
\newcommand{\bbZ}{{\ensuremath{\mathbb Z}} }
\newcommand{\ga}{\alpha}
\newcommand{\gd}{\delta}
\newcommand{\gep}{\varepsilon}       % \ge already exists...
\newcommand{\gp}{\varphi}
\newcommand{\gz}{\zeta}
\newcommand{\gG}{\Gamma}
\newcommand{\gD}{\Delta}
\newcommand{\gO}{\Omega}
\newcommand{\gl}{\lambda}
\newcommand{\gs}{\sigma}
\def\captionfont@{\footnotesize}
\def\captionheadfont@{\scshape}
\long\def\@makecaption#1#2{%
  \vspace{2mm}
  \setbox\@tempboxa\vbox{\color@setgroup
    \advance\hsize-6pc\noindent
    \captionfont@\captionheadfont@#1\@xp\@ifnotempty\@xp
        {\@cdr#2\@nil}{.\captionfont@\upshape\enspace#2}%
    \unskip\kern-6pc\par
    \global\setbox\@ne\lastbox\color@endgroup}%
  \ifhbox\@ne % the normal case
    \setbox\@ne\hbox{\unhbox\@ne\unskip\unskip\unpenalty\unkern}%
  \fi
  \ifdim\wd\@tempboxa=\z@ % this means caption will fit on one line
    \setbox\@ne\hbox to\columnwidth{\hss\kern-6pc\box\@ne\hss}%
  \else % tempboxa contained more than one line
    \setbox\@ne\vbox{\unvbox\@tempboxa\parskip\z@skip
        \noindent\unhbox\@ne\advance\hsize-6pc\par}%
\fi
  \ifnum\@tempcnta<64 % if the float IS a figure...
    \addvspace\abovecaptionskip
    \moveright 3pc\box\@ne
  \else % if the float IS NOT a figure...
    \moveright 3pc\box\@ne
    \nobreak
    \vskip\belowcaptionskip
  \fi
\relax
}
\def\writefig#1 #2 #3 {\rlap{\kern #1 truecm
\raise #2 truecm \hbox{#3}}}
\newcommand{\proj}{\ensuremath{\mathtt{p}}}
\begin{document}

\title[random long time dynamics for mean
field plane rotators]{Synchronization and  random long time dynamics  \\ for mean-field plane rotators}

\author{Lorenzo Bertini}
\address{Dipartimento di Matematica, Universit\`a di Roma La Sapienza
P.le A. Moro 2, 00185 Roma, Italy }

\author{Giambattista Giacomin}
\address{
  Universit\'e Paris Diderot, Sorbonne Paris Cit\'e,  Laboratoire de Probabilit{\'e}s et Mod\`eles Al\'eatoires, UMR 7599,
            F- 75205 Paris, France
}

\author{Christophe Poquet}
\address{
  Universit\'e Paris Diderot, Sorbonne Paris Cit\'e,  Laboratoire de Probabilit{\'e}s et Mod\`eles Al\'eatoires, UMR 7599,
            F- 75205 Paris, France
}

\date{\today}

\begin{abstract}
We consider the natural Langevin dynamics which is reversible with respect to the mean-field plane rotator (or classical
spin XY) measure. It is well known that this model exhibits a phase transition at a critical value of the interaction strength parameter $K$, in the limit of the number
$N$
of rotators going to infinity. A Fokker-Planck PDE   captures 
the evolution of the empirical measure of the system as $N \to \infty$,  at least for finite times and when the empirical measure of the system
at time zero satisfies a law of large numbers. The phase transition is reflected in the fact that the PDE for $K$ above the critical value
has several stationary solutions, notably a stable manifold -- in fact, a circle -- of stationary solutions that are equivalent up to  rotations. These stationary solutions are actually unimodal  densities parametrized by the position of their maximum (the synchronization phase or center). We  characterize the dynamics
on times of order $N$ and we show substantial deviations from the behavior of  the solutions of the  PDE. 
In fact, if the empirical measure at time zero
converges as $N \to \infty$ to a probability measure (which is away from a thin set that we characterize) and if time is speeded up by $N$,
the empirical measure   reaches almost instantaneously a small neighborhood of the stable manifold, to which it then sticks 
and on which  a non-trivial random dynamics takes place. In fact  the synchronization center performs a Brownian motion with 
a  diffusion coefficient that we compute. 
Our approach therefore provides, for one of the basic statistical mechanics systems with continuum symmetry,  a detailed characterization  of the macroscopic deviations from the large scale limit -- or law of large numbers -- due to finite size effects. But the interest for this  model
goes beyond statistical mechanics, since it plays a central role
 in a variety of scientific domains in which one aims at understanding synchronization phenomena.  
  \\[10pt]
  2010 \textit{Mathematics Subject Classification: 60K35, 37N25, 82C26, 82C31,  92B20}
  \\[10pt]
  \textit{Keywords:  Coupled rotators, Fokker-Planck PDE, Kuramoto synchronization model, Finite size corrections to scaling limits,   Long time dynamics, Diffusion
  on stable invariant manifold}
\end{abstract}

\maketitle

\section{Introduction}
\subsection{Overview}
In a variety of instances partial differential equations are a faithful approximation -- in fact, a law of large numbers --
 for particle systems in suitable limits. This is notably the case for stochastic interacting particle systems, for which the mathematical theory has gone very far \cite{cf:KL}. The closeness between the particle system and PDE  is typically proven in the limit of systems with a large number $N$ of particles or for infinite systems under a space rescaling involving a large parameter $N$ -- for example a spin or particle system on $\bbZ^d$ and the lattice spacing scaled down to $\frac 1N$ --
 and up to a time horizon which may depend on $N$. %In fact, it is a law of large numbers for the particle system. 
Of course the question of capturing the finite $N$ corrections has been taken up too, and the related
 central limit theorems as well as large deviations principles %associated to the law of large numbers 
 have been established (see \cite{cf:KL} and references therein).
  {\sl Sizable} deviations  from
the law of large numbers, not just small fluctuations or rare events, can be observed beyond the time horizon 
for which the PDE behavior  has been established and these phenomena can be very relevant. 

The first examples that come to mind are the ones in which
the PDE has multiple isolated stable stationary points: metastability phenomena happens on exponentially long time scales \cite{cf:OV}.
Deviations on substantially shorter time scales can also take place and this is the case for example of the 
noise induced
 escape from stationary unstable solutions, which is particularly relevant in plenty of situations: for example for the model 
 in \cite[Ch.~5]{cf:Errico} 
  phase segregation originates from homogeneous initial data via this mechanism, on times proportional  to the logarithm
  of the size of the system.
 The logarithmic factor is directly tied to the exponential instability of the stationary solution
 (see   
 \cite{cf:Errico} for more literature on this phenomenon). Of course, the type of phenomena happen also in finite dimensional
 random dynamical systems, in the limit of small noise, but we restrict this quick discussion to  infinite dimensional models and PDEs.

In the case on which we focus
the  deviations also happen on time scales substantially shorter than the exponential ones, but the mechanism of the phenomenon
does not involve exponential instabilities.
 In the system we consider there  are multiple stationary solutions, but they are not (or, at least, not all) isolated, and hence they are not stable in the standard sense. Deviations from the PDE behavior happen  as a  direct 
result of the cumulative effect of the fluctuations. More precisely, this phenomenon is due to the presence of whole stable manifold of stationary solutions: the deterministic limit dynamics has no dumping effect along the tangential direction to the manifold so, for the finite size system, the weak noise does have a macroscopic  effect on a suitable time scale that depends on how large the system is.
%This has a direct effect for the finite $N$ system because of  the lack of suppression of the fluctuations which, even if very small when $N$ is large, give a net
%effect of a diffusion on this manifold on times proportional to $N$. The net effect is  a deviation from the behavior that one predicts by using naively the law of large number beyond its timescale of validity. 
We review the mathematical literature on this type of phenomena in \S~\ref{sec:review}, after stating our results.

Apart for the general interest on deviations from the PDE behavior, the model  we consider -- mean-field plane rotators -- 
is a fundamental one in mathematical physics and, more generally, it is the basic model for synchronization phenomena.
Our results provide  a sharp description of the long time dynamics of this model for general initial data.   

%The phenomenon we will be dealing with arises, or is expected to arise, in presence of continuous symmetries and phase transition when a continuum of equivalent states are present. It has been approached in a mathematical fashion for systems that have a discrete  up--down symmetry: two homogeneous phases are present (in super-critical regimes), but a major issue is studying the evolution of configurations that mix large regions of both phases, separated by interfaces (see e.g. \cite{cf:Errico}). The simplest instance is the case of half-space that is close to one phase, and the other half that is close to the other phase. The continuum  symmetry involved is the translation symmetry of the model, that directly impacts the position of the interface, see Section~\ref{sec:review}.

%We focus instead on the mean-field rotator model that, in the super-critical regime, is driven to a {\sl synchronized} state, that is configurations of rotators that have a common preferred orientation or, with  more standard terminology, a synchronization  phase. The system is rotation invariant and we aim at understanding  how, on suitably long times, the synchronization  phase evolves, exploring in a random fashion all possible orientations.

\subsection{The model} 
Consider the set of
ordinary stochastic differential equations
\begin{equation}
\label{eq:evol}
\dd \gp_t^{j,N}\, =\, \frac 1N \sum_{i=1}^N J \left( \gp_t^{j,N}-\gp_t^{i,N}\right) \dd t + \dd W^j_t\, .
\end{equation}
with
$j=1, 2, \ldots , N$,
$\{W_j\}_{j=1,2 ,\ldots}$ is an IID collection of standard Brownian motions and 
 $J(\cdot)= -K \sin (\cdot)$.
With abuse of notation, when writing $\gp_t^{j,N}$
we will actually mean $\gp_t^{j,N}\text{mod}(2\pi)$ and 
for us \eqref{eq:evol}, supplemented with an (arbitrary) initial condition,
will give origin to a diffusion process on $\bbS^N$, where
$\bbS$ is the circle  $\bbR /(2\pi \bbZ)$.

The choice of the interaction potential $J(\cdot)$ is such that
the (unique) invariant probability of the system is 
\begin{equation}
\label{eq:Gibbs}
\pi_{N, K}(\dd \gp)
\propto \exp\left( \frac{K}{N} \sum_{i,j=1}^N \cos( \gp_i-\gp_j)\right) \gl_N(\dd \gp)\, ,
\end{equation}
where $\gl_N$ is the uniform probability measure on $\bbS^N$. Moreover, the evolution
is reversible with respect to $\pi_{N, K}$, which is the well known Gibbs measure
associated to mean-field plane rotators (or classical $XY$ model).

We are therefore considering the simplest Langevin dynamics of mean-field plane rotators
and it is well known that such a model exhibits a phase transition, for $K>K_c:=1$, that breaks the continuum symmetry of the model (for a detailed mathematical physics literature we refer
to \cite{cf:BGP}). The continuum symmetry of the model is evident both
in the dynamics \eqref{eq:evol} and in the equilibrium measure \eqref{eq:Gibbs}:
 if $\{\gp_t^{j,N}\}_{t\ge 0, j=1,\ldots, N}$ solves
 \eqref{eq:evol}, so does $\{\gp_t^{j,N}+c\}_{t\ge 0, j=1,\ldots, N}$, $c$ an arbitrary constant,
 and $\pi_{N,K} \Theta_c^{-1}= \pi_{N,K}$, where $\Theta_c$ is the rotation by an angle $c$, that is  $(\Theta_c \gp)_j= \gp_j+c$ for every $j$.

\subsection{The $N \to \infty$ dynamics and the stationary states}
The phase transition can be  understood also taking a dynamical standpoint.
Given the mean-field set up it turns out to be particularly convenient to consider
the empirical  measure
\begin{equation}
\label{eq:empm}
\mu_{N,t}( \dd \theta)\, :=\, \frac 1N \sum_{j=1}^N \gd_{\gp_t^{j,N}}(\dd \theta)\, ,
\end{equation}
which is a probability on (the Borel subsets of) $\bbS$.
It is well known, see \cite{cf:BGP} (for detailed treatment and  original references), that if  $\mu_{N,0}$ converges weakly for $N \to \infty$,
then so does $\mu_{N,t}$ for every $t>0$. Actually, the process itself
$t \mapsto \{\mu_{N, t}\}$, seen as an element of $C^0 ([0,T], \cM_1)$, where $T>0$  and $\cM_1$ is the space of probability measures on $\bbS$ equipped with the weak topology, converges to a 
non-random limit which is the process that concentrates on  
the unique solution of the non-local PDE ($*$ denotes the convolution)
\begin{equation}
\label{eq:K}
\partial_t p_t(\theta) \, =\, \frac 12 \partial_\theta ^2 p_t( \theta) - \partial_\theta \big( (J*p_t)(\theta) p_t(\theta)
\big), 
\end{equation}
with initial condition prescribed by the limit of $\{\mu_{N, 0}\}_{N=1,2, \ldots}$. If such a 
limit probability does not have a ($C^2$) density (with respect to the uniform measure),
one has to interpret \eqref{eq:K} in a weak sense, but actually, even if the initial datum 
is just in $\cM_1$, that is if it does 
not admit a density or if such a density is not smooth, the probability measure that solves \eqref{eq:K} has a density $p_t(\cdot)\in C^\infty$ for every
$t>0$, see 
 \cite{cf:GPP}.
We insist on the fact that  $p_t(\cdot)$ is a probability density:
$\int_\bbS p_t(\theta) \dd \theta=1$. We will often commit the abuse of notation of writing
$p(\theta)$ when $p\in \cM_1$ and $p$ has a density. Much in the same way,  if $p(\cdot)$ is a probability density,
$p$, or $p(\dd \theta)$, is the probability measure. 

% (definition and discussion of $H_{-1}$ spaces are  in Section~\ref{sec:linear}). 
It is worthwhile to point out  that $ (J*p)(\theta)= -\Re (\hat p_1) K \sin (\theta) + \Im (\hat p_1) K \cos(\theta)$
with $\hat p_1:= \int _\bbS p(\theta)\exp(i \theta)Ê\dd\theta$. 
This is to say that the nonlinearity enters only through the first Fourier coefficient
of the solution, a peculiarity that allows to go rather far in the analysis 
of the model. Notably, starting from this observation one can easily (once again
details and references are given in \cite{cf:BGP}) see that all the stationary solutions
to \eqref{eq:K}, in the class of probability densities, can be written, up to a rotation, as
\begin{equation}
\label{eq:q}
q(\theta)\,:=\, \frac{\exp(2Kr \cos (\theta)}{2\pi I_0(2Kr)}\, ,
\end{equation}
where $2\pi I_0(2Kr)$ is the normalization constant written in terms of the modified 
Bessel function of order zero ($I_j(x)= (2\pi)^{-1}\int_\bbS (\cos \theta)^j \exp(x \cos (\theta)) \dd \theta$, for $j=0,1$) and $r$ is a non-negative solution of
the fixed point equation $r= \Psi (2Kr)$, with $\Psi(x)=I_1(x)/I_0(x)$. 
Since $\Psi(\cdot):[0, \infty) \to [0,1)$ is increasing, concave, $\Psi(0)=0$ and $\Psi'(0)=1/2$
we readily see that if (and only if) $K>1$ there exists a non-trivial (i.e. non-constant)
solution to \eqref{eq:K}. Let us not forget however that $\Psi(0)=0$ implies that $r=0$ is a solution and therefore the constant density $\frac 1{2\pi}$ is a solution no matter what the value of $K$ is.  From now on we set $K>1$ and choose $r=r(K)$, the unique 
positive solution of the fixed point equation, so that the probability 
density $q(\cdot)$ in \eqref{eq:q} is non trivial and it achieves the unique maximum at $0$
and the minimum at $\pi$. Note that the rotation invariance of the system
immediately yields that there is a whole family of stationary solution:
\begin{equation}
\label{eq:M}
M\, =\, \{ q_\psi (\cdot):\, q_\psi (\cdot):= q(\cdot- \psi)
\text{ and } \psi \in \bbS\}\,,
\end{equation}
and, when $x \in \bbR$, $q_x(\cdot)$  of course means $q_{x\text{mod}(2\pi)}(\cdot)$.
$M$, which is more practically viewed as a manifold (in a suitable function space, see  \S~\ref{sec:Manif} below), is invariant and stable for the evolution. The proper notion of stability is
given in the context of {\sl normally hyperbolic manifolds} (see \cite{cf:SellYou} and references therein), but the full power of such a
concept is not needed for the remainder. Nevertheless  let us stress that in \cite{cf:GPP} one can find
a complete analysis of the global dynamic phase diagram, notably the fact that
unless $p_0(\cdot)$ belongs to the stable manifold $U$ of the unstable solution $\frac1{2\pi}$ --
the solution corresponding to $r=0$ in \eqref{eq:q} --
 $p_t(\cdot)$ converges (also in strong norms, controlling all the derivatives) to one of the points in $M$, see Figure~\ref{fig:1}. 
There is actually an explicit characterization of $U$:
\begin{equation}
\label{eq:Umanif}
U\, =\, \left \{ p\in \cM_1: \,  \int_\bbS  \exp(i \theta) p(\dd \theta)\,  =0\right\}\, .
\end{equation}
As a matter of fact, it is easy to realize that if $p_0(\cdot) \in U$ then \eqref{eq:K} reduces to the heat equation
$\partial_t p_t(\theta)= \frac 12 \partial_\theta^2 p_t (\theta)$ which of course relaxes to $\frac1{2\pi}$.

\begin{figure}
\begin{center}
\leavevmode
\epsfxsize =14.5 cm
\psfragscanon 
\psfrag{0}[l][l]{\small $0$}
\psfrag{p0}[l][l]{\small $p_0$}
\psfrag{pin}[l][l]{\small $p_\infty$}
\psfrag{U}[l][l]{\small $U$}
\psfrag{M0}[l][l]{\small $M$}
\psfrag{2pi}[l][l]{\small $\frac 1{2\pi}$}
\psfrag{th}[l][l]{\small $\theta$}
\psfrag{pi2}[l][l]{\small $2\pi$}
\psfrag{t=0}[l][l]{\small $t=0$}
\psfrag{tlarge}[l][l]{\small $t\to \infty$}
\psfrag{psi}[l][l]{\small $\psi$}
\epsfbox{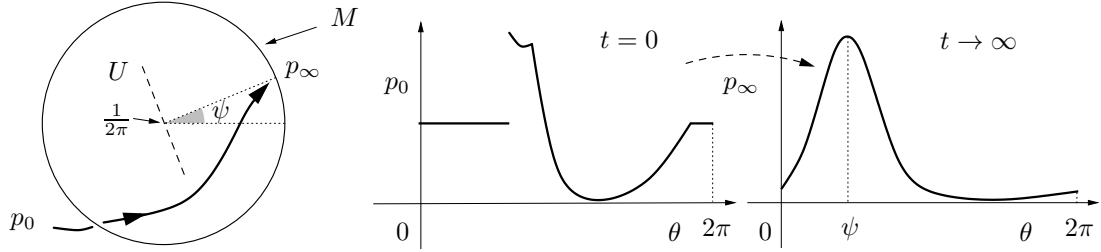}
\end{center}
\caption{\label{fig:1}
The evolution limit evolution \eqref{eq:K} instantaneously smoothens an arbitrary initial probability and, unless
the Fourier decomposition such an initial condition has zero coefficients corresponding to the first harmonics (the hyperplane $U$), it drives it
to a point $p_\infty$ -- a synchronized profile -- on the invariant manifold $M$ and of course it stays there for all times. This has been proven in \cite{cf:GPP}, here we are interested in what happens for the finite size -- $N$ -- system and we show that the PDE approximation is faithful up to times
much shorter than $N$: on times proportional to $N$ synchronization is kept and the center of synchronization $\psi$  performs a Brownian motion on $\bbS$.
}
\end{figure}

%It may be helpful to rethink of the empirical measure at equilibrium, that is under the  invariant probability  \eqref{eq:Gibbs}. One can actually show, for example by Large Deviation arguments, that the empirical measure at equilibrium converges in law to the random probability density $q_{X}(\cdot)$, with $X$ a uniform random variable on $\bbS$ (the topology in the space of probability measure is again the standard weak convergence). QUESTION: a similar result should be in the literature...? Our main result is a dynamical version of this result. 

\subsection{Random dynamics on $M$: the main result}
In spite of the stability of $M$, $q_\psi(\cdot)$ itself  is not stable, simply because if we start nearby, say from
$q_{\psi'}$, the solution of $\eqref{eq:K}$ does not converge to $q_\psi(\cdot)$. 
The important point here is that the linearized evolution operator around $q(\cdot) \in M$ ($q$ is an arbitrary element of $M$,
not necessarily the one in \eqref{eq:q}: the phase $\psi$ of $q_\psi$ is explicit only when its absence may be misleading)
\begin{equation}
\label{eq:Lq}
L_q u (\theta)\,:=\, \frac 12  u'' -[ u J*q+ q J*u]'\, ,
\end{equation}
with domain $\{u\in C^2(\bbS, \bbR):\, \int_\bbS u =0\}$ is symmetric in 
$H_{-1,1/q}$ -- a weighted $H_{-1}$ Hilbert space that we introduce in detail
in Section~\ref{sec:linear} -- and it has compact resolvent.
Moreover the  
spectrum of $L_q$, which is of course discrete, lies  in $(-\infty, 0]$ and the eigenvalue $0$
has a one dimensional eigenspace, generated by $q'$. So $q'$ is the only {\sl neutral direction}
and it corresponds precisely to the tangent space of $M$ at $q(\cdot)$: all other directions, in function
space, are contracted by the linear evolution and the nonlinear part of the evolution
does not alter substantially this fact \cite{cf:GPP,cf:Henry}.   

Let us now step back and recall that our main concern is  with the behavior of \eqref{eq:evol}, with $N$ large but finite, 
and not \eqref{eq:K}. In a sense  the finite size, i.e. finite $N$, system 
is close to a suitable stochastic perturbation of \eqref{eq:K}: the type of
stochastic PDE, with noise vanishing as $N \to \infty$, needs to be carefully {\sl guessed} \cite{cf:GLP}, keeping
in particular in mind that we are dealing with a system with one conservation law.
We will tackle directly \eqref{eq:evol}, but the heuristic picture that one obtains by
thinking of an SPDE with vanishing noise is of help. 
 In fact the considerations we have just made on $L_q$ suggest 
 that if one starts the SPDE on $M$, the solution keeps  very close to $M$,  since the deterministic part of the dynamics is contractive in the orthogonal directions to $M$, but a (slow, since the noise is small) random motion on $M$ arises because
 in the tangential direction the deterministic part of the dynamics is {\sl neutral}. This is indeed what happens for the model we consider for $N$ large.
The difficulty that arises in dealing with the interacting diffusion system  \eqref{eq:evol}
is that  one has to work with \eqref{eq:empm}, which is not a function.
Of course one can mollify it, but the evolution is naturally written and, to a certain extent, {\sl closed} 
in  terms
of the empirical measure, and we do not believe that any 
significative simplification arises in proving our
main statement for a mollified version. Working with the empirical measure imposes a clarification from now: as we explain in Section~\ref{sec:linear} and Appendix~\ref{sec:A_H}, if $\mu$ and $\nu\in \cM_1$, then 
$\mu-\nu$ can be seen as an element of $H_{-1}$ (or, as a matter of fact, also as an element of a weighted 
$H_{-1}$ space).%: the weight produces equivalent norms). 
\medskip

Here is the main result that we prove (recall that $K>1$):
\medskip

\begin{theorem}
\label{th:main}
Choose a positive constant $\tau_f$ and a probability   $p_0\in \cM_1 \setminus   U$. If for every $\gep>0$
\begin{equation}
\label{eq:init-main}
\lim_{N \to \infty}
\bbP \left(
\left \Vert \mu_{N, 0} - p_0 \right \Vert_{-1}\le \gep \right) \, =\, 1\, ,
\end{equation}
then there exist a constant $\psi_0$ that depends only on $p_0(\cdot)$ and, for every $N$, a continuous process 
$\{W_{N,\tau}\}_{\tau \ge 0}$, adapted to  the natural filtration of $\{W^j_{N\cdot }\}_{j=1,2, \ldots,N}$, 
such that $W_{N, \cdot}\in C^0([0,\tau_f]; \bbR)$ converges weakly to a standard Brownian motion and 
 for every $\gep>0$
\begin{equation}
\label{eq:main}
\lim_{N \to \infty}
\bbP \left( 
\sup_{\tau \in [\gep_N, \tau_f]} 
\left\Vert \mu_{N, \tau N} - q_{\psi_0 + D_K W_{N,\tau} } \right \Vert_{-1}\le \gep \right) \, =\, 1\, ,
\end{equation}
where $\gep_N := C /N$, $C=C(K, p_0, \gep)>0$,  and  
\begin{equation}
\label{eq:DK}
D_K\, :=\,  \frac1{\sqrt{1- \left(I_0(2Kr)\right)^{-2}}}\, .
\end{equation}
\end{theorem}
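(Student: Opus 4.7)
I would organize the argument according to three separated time scales, reflecting the three regimes visible in the statement: an $O(1)$ relaxation toward $M$, a $O(N)$ ``sticking'' regime, and the Brownian diffusion on $M$ that lives on that long scale.

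\textbf{Stage 1: deterministic entry into a neighborhood of $M$.} Working first on the original time scale $t\in[0,t_0]$ with $t_0=t_0(\gep)$ fixed (so $\tau=t_0/N\le \gep_N$ for $N$ large), the standard propagation of chaos / law of large numbers for \eqref{eq:evol} shows that $\mu_{N,t}$ stays $H_{-1}$-close to the solution $p_t$ of \eqref{eq:K} with initial condition $p_0$. Because $p_0\notin U$, the global phase portrait from \cite{cf:GPP} provides a unique $q_{\psi_0}\in M$ such that $p_t\to q_{\psi_0}$ in a strong norm; hence one can choose $t_0$ so that $\|p_{t_0}-q_{\psi_0}\|_{-1}\le \gep/3$, and then with probability $\to 1$ the same holds for $\mu_{N,t_0}$. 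This gives the ``almost instantaneous'' clause in \eqref{eq:main} and provides an initial condition that sits in a small $H_{-1}$-tube around $M$.

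\textbf{Stage 2: parametrization and Lyapunov control in the tube.} In a tubular neighborhood $\cN$ of $M$ in the weighted $H_{-1,1/q}$ geometry one can define a smooth projection $\Pi:\cN\to\bbS$ so that $\mu-q_{\Pi(\mu)}$ is orthogonal (in the weighted inner product) to the tangent vector $q'_{\Pi(\mu)}$ of $M$. Setting $\psi_N(t):=\Pi(\mu_{N,t})$ and $v_N(t):=\mu_{N,t}-q_{\psi_N(t)}$, the spectral decomposition of $L_{q_{\psi_N}}$ -- simple zero eigenvalue with eigenvector $q'_{\psi_N}$ and a strictly negative rest of spectrum -- gives an exponential contraction in the transverse direction. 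The heart of the work is a Lyapunov estimate for $\|v_N(t)\|_{-1,1/q_{\psi_N(t)}}^2$: the deterministic part contributes a strictly negative term of the size of the spectral gap, while the noise, after one applies Itô's formula to the empirical measure evolution, has infinitesimal variance of order $1/N$. An Ornstein--Uhlenbeck heuristic then gives $\|v_N(t)\|_{-1}^2=O(1/N)$ in quasi-equilibrium, and a stopping-time argument shows $\mu_{N,t}$ remains in $\cN$ for all $t\le \tau_f N$ with probability tending to $1$.

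\textbf{Stage 3: effective SDE for $\psi_N$ and Brownian limit.} Viewing $\psi_N$ as a smooth functional of the empirical measure, Itô's formula on $\bbS^N$ yields
\[
\dd \psi_N(t)\,=\, b_N(t)\,\dd t + \sigma_N(t)\cdot \dd \bW_t,
\]
where $b_N(t)$ comes from the drift of \eqref{eq:evol} plus an Itô correction, and $\sigma_N(t)$ is explicit in terms of the gradient of $\Pi$ at $\mu_{N,t}$. Rotational symmetry kills the leading contribution of $b_N$ on $M$ itself, so $b_N(t)=O(1/N)+O(\|v_N\|_{-1})$, and $\sigma_N(t)\sigma_N(t)^T$, evaluated on $M$, converges to $D_K^2/N$; the identity \eqref{eq:DK} comes from computing $\|q'\|^2$ in the $H_{-1,1/q}$ norm and using the Bessel identities $r=\Psi(2Kr)$ and $\Psi'(2Kr)= 1-r/(2Kr)\cdot (\dots)$ that relate the variance of $\cos\theta$ under $q$ to $1-I_0(2Kr)^{-2}$. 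Time-changing by $N$, so that $W_{N,\tau}:=D_K^{-1}(\psi_N(\tau N)-\psi_0)$, one obtains a continuous semimartingale with vanishing drift and quadratic variation converging to $\tau$ uniformly on $[\gep_N,\tau_f]$. Tightness of $W_{N,\cdot}$ and identification of the limit via the Stroock--Varadhan martingale problem conclude that it converges weakly to a standard Brownian motion, which together with the Stage 2 control of $v_N$ yields \eqref{eq:main}.

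\textbf{The main obstacle.} The delicate point is Stage 2: propagating the $O(1/\sqrt{N})$ bound on $v_N$ uniformly over the enormous time window $[t_0,\tau_f N]$. The spectral gap of $L_{q_{\psi_N}}$ gives contraction in the transverse direction, but since $\psi_N$ itself is drifting by amounts of order $1$ on this time scale, the linearization point moves and one must show that the projection $\Pi$ and the gap bound are stable under this motion. A careful coupling of the scalar SDE for $\psi_N$ with the Hilbert-space estimate for $v_N$, combined with a bootstrap/stopping-time scheme that rules out excursions outside $\cN$, is what makes this step work and is the crux of the proof.
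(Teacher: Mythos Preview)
Your three-stage outline matches the paper's architecture, but Stage~3 as written has a genuine gap that is precisely the crux of the whole argument.

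\textbf{The drift does not die by rotational symmetry alone.} Your claim $b_N(t)=O(1/N)+O(\|v_N\|_{-1})$ is fatal: with $\|v_N\|_{-1}\sim N^{-1/2}$, integrating $O(\|v_N\|_{-1})$ over the window $[0,\tau_f N]$ gives a cumulative drift of order $N^{1/2}$, not $o(1)$. Even if you meant $O(\|v_N\|_{-1}^2)=O(1/N)$ (which is the right order for the nonlinear part of the flow projected on the tangent), the integrated contribution is still $O(1)$ and must be shown to vanish. Rotational invariance only tells you the deterministic drift is zero \emph{exactly on $M$}; it says nothing about the It\^o correction from the curvature of $\Pi$, nor about the $O(\|v_N\|^2)$ nonlinear remainder once you are $N^{-1/2}$ off the manifold. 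The paper handles this by (i) expanding the projection $\proj$ to \emph{second} order (Lemma~\ref{lem:second order projection}), (ii) re-injecting the mild equation into the nonlinear term $\int e^{(T_k-s)L}\partial_\theta[\nu_s J*\nu_s]\,\dd s$ to produce nine pieces $A_{k,1},\dots,A_{k,9}$, and (iii) for the one piece that cannot be bounded pathwise---the noise--noise term $A_{k,3}$---running a Doob decomposition whose predictable part vanishes by a \emph{parity} argument on the semigroup kernel $\cG^\psi$ (not rotational symmetry), and whose martingale part is controlled by second-moment bounds. None of this machinery is visible in your Stage~3; ``rotational symmetry kills the leading contribution'' is where the real work hides.

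\textbf{Two smaller points.} First, Stage~1 as you wrote it only brings $\mu_{N,t_0}$ into a fixed $\gep$-neighborhood of $q_{\psi_0}$, but the subsequent estimates need the transverse component to be $O(N^{2\zeta-1/2})$. The paper inserts an additional iterative phase of length $O(\log N)$ (Proposition~\ref{th:approachM}) to contract from $\gep$ to $N^{2\zeta-1/2}$; the $\gep_N=C/N$ in the statement absorbs this. Second, the paper does \emph{not} write a continuous-time It\^o SDE for $\psi_N(t)$ as you propose: because $\mu_{N,t}$ lives only in $H_{-1}$ and the linearization point $q_{\psi_N(t)}$ moves, it instead freezes the phase on mesoscopic intervals of length $T\sim N^{1/10}$, works with the mild formulation around $q_{\psi_{k-1}}$, and passes to a piecewise-constant martingale $\sum_k (Z^k_{T_k},q'_{\psi_{k-1}})_{-1,1/q_{\psi_{k-1}}}/\|q'\|^2_{-1,1/q}$ to which the martingale CLT applies. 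Your direct-SDE route is conceptually cleaner but would require justifying It\^o's formula for $\Pi(\mu_{N,t})$ with a time-dependent weighted norm and controlling the resulting cross-terms; the paper's discretization sidesteps this.
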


\medskip

The result is saying that, unless one starts on the stable manifold of the unstable solution (see Remark~\ref{rem:fromU}
for what one expects if $p_0\in U$),
the empirical measure reaches very quickly a small neighborhood of the  manifold $M$: this happens
on a time scale of order one, as  a consequence of the properties of the deterministic 
evolution law \eqref{eq:K} (Figure~\ref{fig:1}), and, since we are looking at times of order $N$,
this happens almost instantaneously. Actually, in spite of the fact that the result just addresses
the limit of the empirical measure, the drift along $M$ is due to fluctuations: the noise 
pushes the empirical measure away from $M$ but the deterministic part of the dynamics 
{\sl projects back} the trajectory to $M$ and the net effect of the noise is a random 
shift -- in fact, a rotation -- along the manifold (this is taken up 
in more detail in the next section, where we give a
complete heuristic version  of the proof of Theorem~\ref{th:main}).

%Therefore we do need to have a rather precise control of the fluctuations and  being close to $M$ does not suffice: we need to know that we are  essentially all the times at distance of about $1/\sqrt{N}$ from the manifold and because of this $\gep_N =C/N$ (even $C$ large) does not suffice. We need in fact to exploit the contractive properties of the dynamics and carry the empirical measure at about $1/\sqrt{N}$ from the manifold. One cannot do better: the fluctuations are  $O( 1/\sqrt{N})$, and even counting on the full strength of an exponentially contracting dynamics  (which is, to a certain extent, the case here) we cannot expect less than a time of about $C \log N$ to reach the fluctuations scale, as solving $\exp(-\gl t) \le N^{-1/2}$ for $t$ suggests. \medskip

%QUESTION. We remark also, that without much effort one can upgrade the result  to $\tau_f=\tau_f(N) \to \infty$, for example for $\tau_f(N)=N^a$, $a>0$ small.  We already have the result as far as the closeness to $M$ is considered, but unlike in the case  of interfaces it is unclear what an interesting result would be here.

\begin{rem}
Without much effort, one can upgrade this result to much longer times: if we set  $\tau_f(N)=N^a$ with an arbitrary $a>1$,
there exists an adapted process $W^a_{N,\tau}$ converging to a standard Brownian motion such that
\begin{equation}
\lim_{N \to \infty}
\bbP \left( 
\sup_{\tau \in [\gep_N, \tau_f(N)]} 
\left\Vert \mu_{N, \tau N^a} - q_{\psi_0 + D_K N^{a-1}W^a_{N,\tau} } \right \Vert_{-1}\le \gep \right) \, =\, 1\, .
\end{equation}
This is due to the fact that our estimates ultimately rely on moment estimates, cf. Section~\ref{sec:apriori}. 
These estimates are obtained for arbitrary moments and we choose the moment sufficiently large
to get uniformity for times $O(N)$, but working for times $O(N^a)$ would just require choosing larger moments.
We have preferred to focus on the case $a=1$ 
this is the natural scale, 
that is the scale  in which the center of the probability density converges to a Brownian motion and not to an ``accelerated" Brownian motion
(this is really due to the fact that we work on $\bbS$ and marks a difference with \cite{cf:BBDMP,cf:BBB} where one
can rescale the space variable). 
\end{rem}

\subsection{The synchronization phenomena viewpoint}
\label{sec:sync}
The model \eqref{eq:evol} we consider is actually a particular case of the Kuramoto synchronization model
(the full Kuramoto model includes {\sl quenched disorder} in terms of random constant speeds for the rotators, see 
\cite{cf:acebron,cf:BGP} and references therein). The mathematical physics literature and the more  bio-physically oriented literature 
use somewhat different notations reflecting a slightly different viewpoint. In the synchronization literature one introduces
the synchronization degree $\boldsymbol r_{N,t}$ and the synchronization center $\boldsymbol\Psi_{N,t} $ via
\begin{equation}
\boldsymbol r_{N,t} \exp(i \boldsymbol \Psi_{N, t})\, :=\, \frac 1N \sum_{j=1}^N
\exp(i \gp^{j,N}_t) \, \left( \,= \,
\int_\bbS \exp(i \theta) \mu_{N, t} (\dd \theta)\right)\, ,
\end{equation}
which clearly correspond to the parameters $r$ and $\psi$ that appear in the definition of $M$, but 
$\boldsymbol r_{N,t}$ and  $\boldsymbol\Psi_{N,t} $ are defined for $N$ finite and also far from $M$.
Note that if \eqref{eq:init-main} holds, then both $\boldsymbol r_{N,t}$ and  $\boldsymbol\Psi_{N,t} $ converge in probability as $N \to \infty$
to the limits $r$ and $\psi$, with $r\exp(i\psi)= \int_\bbS \exp(i \theta) p_0(\dd \theta)$ and the assumption that
$p_0 \not\in U$ just means $r\not= 0$.
Here is a straightforward consequence of Theorem~\ref{th:main}:
\medskip

\begin{cor}
\label{th:cor}
Under the same hypotheses and definitions as in  Theorem~\ref{th:main} we have that
the stochastic process $ \boldsymbol \Psi_{N, N\cdot} \in C^0([\gep, \tau_f]; \bbS)$ %(CHECK: is it ok near zero or need to look at $[\gep, \tau_f]$?)  
converges weakly, for every $\gep\in (0, \tau_f]$,  to 
$(\psi_0 + D_K W_{\cdot})\mathrm{mod} (2\pi)$.
\end{cor}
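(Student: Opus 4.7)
I would derive the corollary from Theorem~\ref{th:main} by a continuous mapping argument applied to the functional $p \mapsto \arg \int_\bbS e^{i\theta}\, p(\dd\theta)$, after first checking that its modulus stays uniformly bounded away from $0$.

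The first step is to compute, for the manifold elements $q_\psi$, the first Fourier coefficient. Using \eqref{eq:q}, the fixed point relation $r = I_1(2Kr)/I_0(2Kr)$ and the even symmetry of $q$ about $0$, one obtains $\int_\bbS e^{i\theta}\, q(\dd\theta) = r$, so by rotation
\[
\int_\bbS e^{i\theta}\, q_\psi(\dd\theta) \,=\, r\, e^{i\psi},\qquad \psi\in\bbS,
\]
with $r>0$ since $K>1$.

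The second step is to exploit that $\theta\mapsto e^{i\theta}$ is smooth on $\bbS$ and therefore lies in the dual of the $H_{-1}$ space used in Theorem~\ref{th:main}; this yields a constant $C_1<\infty$ with
\[
\Big| \int_\bbS e^{i\theta}\,(\mu-\nu)(\dd\theta) \Big| \,\le\, C_1\,\|\mu-\nu\|_{-1}\quad \text{for all } \mu,\nu\in\cM_1.
\]
Fixing $\delta\in(0,\tau_f]$ (the starting time appearing in the corollary) and $\eta>0$ with $C_1\eta<r/2$, I would specialize this to $\mu = \mu_{N,\tau N}$, $\nu = q_{\psi_0 + D_K W_{N,\tau}}$ and insert \eqref{eq:main} applied with accuracy $\eta$ (valid on $[\delta,\tau_f]\subset[\gep_N,\tau_f]$ once $N$ is large enough) to obtain
\[
\bbP\Big( \sup_{\tau \in [\delta,\tau_f]} \big|\boldsymbol r_{N,\tau N}\, e^{i\boldsymbol\Psi_{N,\tau N}} - r\, e^{i(\psi_0 + D_K W_{N,\tau})}\big| \le C_1\eta \Big) \,\longrightarrow\, 1.
\]

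The third step is to pass to the argument. On the event in the previous display, $\boldsymbol r_{N,\tau N} \ge r/2$ uniformly in $\tau\in[\delta,\tau_f]$, so the Lipschitz character of $\arg$ on $\{z\in\bbC:|z|\ge r/2\}$ gives
\[
\bbP\Big( \sup_{\tau\in[\delta,\tau_f]} \mathrm{dist}_\bbS\big(\boldsymbol\Psi_{N,\tau N},\ \psi_0 + D_K W_{N,\tau}\big) \le C_2\eta \Big) \,\longrightarrow\, 1,
\]
where $\mathrm{dist}_\bbS$ is the geodesic distance on $\bbS$. Finally, since $W_{N,\cdot}$ converges weakly to standard Brownian motion in $C^0([0,\tau_f];\bbR)$, composition with the smooth projection $\bbR\to\bbS$ yields weak convergence of $(\psi_0 + D_K W_{N,\cdot})\,\mathrm{mod}(2\pi)$ in $C^0([\delta,\tau_f];\bbS)$, and combining with the previous uniform approximation and letting $\eta\downarrow 0$ gives the corollary. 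The only real obstacle is the singularity of $\arg$ at the origin: one must rule out that $\boldsymbol r_{N,\tau N}$ approach $0$ anywhere on $[\delta,\tau_f]$; Theorem~\ref{th:main} delivers this automatically, since the target modulus $r$ is strictly positive and the $H_{-1}$ control leaves an explicit $O(\eta)$ gap.
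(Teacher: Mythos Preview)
Your proof is correct and is precisely the kind of continuous mapping argument the paper has in mind: the paper does not give a proof, merely declaring the corollary a ``straightforward consequence of Theorem~\ref{th:main}'' and then discussing (but not pursuing) an alternative route via the SDE \eqref{eq:altern} for $\boldsymbol\Psi_{N,t}$. Your write-up supplies exactly the missing details---the identification $\int_\bbS e^{i\theta}q_\psi(\dd\theta)=re^{i\psi}$, the $H_{-1}$--$H_1$ duality bound, the Lipschitz control of $\arg$ away from the origin, and the Slutsky-type combination with the weak convergence of $W_{N,\cdot}$---and these are all sound.
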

\medskip

It is tempting to prove such a result by looking directly 
at the evolution of $\boldsymbol\Psi_{N,t}$:
\begin{multline}
\label{eq:altern}
\dd \boldsymbol\Psi_{N,t} \, =\, 
\left(- K + \frac 1{2N \boldsymbol r^2_{N,t}} \right) \frac 1N 
\sum_{j=1}^N \sin( 2( \gp^{j,N}_t- \boldsymbol \Psi_{N,t})) \dd t \\
+ \frac 1{ \boldsymbol  r_{N,t} N} \sum_{j=1}^N \cos(\gp^{j,N}_t- \boldsymbol\Psi_{N,t}) \dd W_j (t)\, . 
\end{multline}
But this clearly requires a control of the evolution of the empirical measure, so it does not
seem that \eqref{eq:altern} could provide an alternative way to many of the estimates that we develop, namely convergence to a neighborhood of $M$ and persistence of the proximity to $M$ (see Section~\ref{sec:apriori} and Section~\ref{sec:approachM}).
On the other hand, it seems plausible that one could use \eqref{eq:altern} to develop an alternative 
approach to the dynamics on $M$, that is an alternative to Section~\ref{sec:dyntan}. While this can be
interesting in its own right, since the notion of synchronization center   that we use in the proof and $\boldsymbol\Psi_{N, t}$ 
are almost identical (where they are both defined, that is close to $M$) we do not expect substantial simplifications. %OBSERVATION: it would be nice to develop heuristics. 

\subsection{A look at the literature and perspectives}
\label{sec:review}
Results related to our work have been obtained in the context of SPDE models with
vanishing noise. In \cite{cf:BDMP,cf:Funaki} one dimensional stochastic reaction diffusion equations with bistable potential (also called
{\sl stochastic Cahn-Allen} or {\sl model A}) are analyzed for initial data that are close to profiles that connect the two phases.
It is shown%, much like in our case,
that the location of the phase boundary performs a Brownian motion. These  results have been improved in a number of ways,
notably to include {\sl small asymmetries}   that result in a drift for the arising diffusion process \cite{cf:BBjsp} and to deal with macroscopically finite volumes \cite{cf:BBB}
(which introduce a repulsive effect approaching the boundary). Also the case of stochastic phase field equations has been considered 
\cite{cf:BBBP}.

For interacting particle  systems results  
 have been obtained  for the zero temperature limit of $d$-dimensional  Brownian particles interacting via local pair  potentials
 in \cite{cf:Funaki2}: in this case
the {\sl frozen clusters}  perform a Brownian motion and, in one dimension, also the merging of clusters is analyzed \cite{cf:Funaki3}. In this
case the very small temperature is the small noise from which cluster diffusion originates. With respect to
 \cite{cf:Funaki2,cf:Funaki3}, our results hold for any super-critical interaction, but of course our system is of mean field type.
 It is also interesting to observe that  for the model
in \cite{cf:Funaki2,cf:Funaki3} establishing the stability of the frozen clusters is the crucial issue,
because the motion of the center of mass is a martingale, i.e. there is no drift. A substantial part
of our work is in controlling that the drift of the center of synchronization  vanishes (and controlling the drift
is a substantial part also of \cite{cf:BDMP,cf:Funaki,cf:BBjsp,cf:BBB,cf:BBBP}). This is directly related to the content
of \S~\ref{sec:sync}. 

As a matter of fact,
in spite of the fact that our work deals directly with an interacting system,
 and not with an SPDE model, our approach is closer to the one in the SPDE literature. However,
 as we have already pointed out, a non negligible point is that
we are forced to perform an analysis in distribution spaces, in fact Sobolev spaces with negative exponent, in contrast to
the approach in the space of continuous functions in    \cite{cf:BDMP,cf:Funaki,cf:BBjsp,cf:BBB,cf:BBBP}.
 We point out that 
approaches to dynamical mean field type systems via Hilbert spaces of distribution has been already taken up in \cite{cf:FM}
but in our case the specific use of weighted Sobolev spaces is not only a technical tool, but it
is intimately related to the geometry of the contractive invariant manifold $M$.
In this sense and because of the iterative procedure we apply -- originally introduced in \cite{cf:BDMP} --
 our work is a natural development  of \cite{cf:BDMP,cf:BBB}.

\medskip

An important issue about our model that we have not stressed at all  is that propagation of chaos holds (see e.g.~\cite{cf:Gartner}), in the sense that
if the initial condition is given by a product measure, then this property is approximately preserved, at least for finite times.
Recently much work has been done toward establishing   quantitative estimates of chaos propagation
(see for example the references in \cite{cf:CDW}). On the other hand,  like for the model  in 
\cite{cf:CDW}, we know that, for our model,  chaos propagation eventually breaks down: this is just because 
one can  show by Large Deviations arguments
that the empirical measure at equilibrium converges in law as $N \to \infty$  to the random probability density $q_{X}(\cdot)$, with $X$ a uniform random variable on $\bbS$. %(the topology in the space of probability measure is again the standard weak convergence)
But using Theorem~\ref{th:main} one can go much farther and 
show  that  chaos propagation breaks down at times proportional to $N$. From  Theorem~\ref{th:main}  one can actually extract  also  an accurate
description of how the correlations  build up due to the random motion on $M$.

\medskip

It is natural to ask whether the type of results we have proven extend to 
the case in which random natural frequencies are present, that is to the disordered version
of the model we consider that goes under the name of Kuramoto model. The question is natural because for the limit PDE
\cite{cf:dPdH,cf:eric}
 there is a 
contractive manifold similar to $M$ \cite{cf:GLPdis}. However the results in
  \cite{cf:eric2} suggest that a nontrivial dynamics on the contractive manifold is observed rather
  on times proportional to $\sqrt{N}$ and one expects a dynamics with a nontrivial random drift.
 The role of disorder in this type of models is not fully elucidated  (see however
  \cite{cf:CdP} on the critical case) and the global long time dynamics  represents  a challenging issue.

\medskip

The paper is organized as follows: we start off (Section~\ref{sec:more}) by introducing the precise mathematical set-up and a number
of technical results. This will allow us to present  quantitative heuristic arguments and sketch of poofs.
In Section~\ref{sec:apriori} we prove that if the system is close to $M$, it stays so for a long time.
We then move on to analyzing the dynamics on $M$ (Section
\ref{sec:dyntan}) and it is here that we show that the drift is negligible.
Section~\ref{sec:approachM} provides the estimates that guarantee that we do approach $M$
and in Section~\ref{sec:proofmain} 
we collect all these estimates and complete the proof our main result (Theorem~\ref{th:main}).

\section{More on the mathematical set-up and sketch of proofs}
\label{sec:more}

\subsection{On the linearized evolution}
\label{sec:linear}
We introduce the Hilbert space $H_{-1,1/q}$ or, more generally, the space $H_{-1,w}$ for
a general weight $w\in C^1(\bbS; (0, \infty))$ by using the rigged Hilbert space structure \cite{cf:Brezis}
with pivot space  $\bbL^2_0:= \{ u\in \bbL^2: \, \int_\bbS u =0\}$.
In this way given an Hilbert space $V\subset \bbL^2_0$, $V$ dense in $\bbL^2_0$, 
for which the canonical injection of $V$ into $\bbL^2_0$  is continuous, one automatically 
obtains a representation of $V'$ -- the dual space -- in terms of a third Hilbert space into which $\bbL^2_0$ is canonically and densely injected. If  $V$ is the closure of $\{u \in C^1(\bbS; \bbR):\, \int u =0\}$
under the squared norm
$\int_\bbS (u')^2 / w$, that is $H_{1, 1/w}$, the third Hilbert space is precisely
$H_{-1,w}$. The duality between $H_{1, 1/w}$ and $H_{-1,w}$ is denoted in principle by
$\langle\, \cdot\, , \, \cdot\,\rangle_{H_{1, 1/w},H_{-1,w}}$, but less cumbersome notations
will be introduced when the duality is needed (for example, below we drop the subscripts).  

It is not difficult to see that for $u, v \in H_{-1,w}$
\begin{equation}
\left( u, v \right)_{-1,w}\, =\, \int_\bbS  w\,  \cU \cV\, ,
\end{equation}
where $\cU$, respectively $\cV$, is the primitive of $u$ (resp. $v$) such that $\int_\bbS w \cU =0$
(resp. $\int_\bbS w \cV =0$), see \cite[\S~2.2]{cf:BGP}. More precisely,  $u\in H_{-1,w}$ if there
exists $\cU \in \bbL^2(\bbS; \bbR)$ such that $\int_\bbS \cU w =0$ and 
  $\langle u, h\rangle=-\int_\bbS \cU h'$ for every $h \in H_{1,1/w}$. 
One sees directly also that by changing $w$ one produces equivalent $H_{1, w}$ norms
 \cite[\S2.1]{cf:GPPP} so, when the geometry of the Hilbert space is not crucial, one can simply replace the weight by $1$, and in this case we simply write $H_{-1}$. Occasionally
 we will need also $H_{-2}$ which is introduced in an absolutely analogous way. 
 \medskip

 \begin{rem}\label{rem:norecenter}
 One observation that is of help in estimating weighted $H_{-1}$ norms is that
 computing the norm of $u$ requires access to $\cU$: %(this is often the case when  $u$ is a function or has some other explicit expression, for example in terms of a measure): 
 in practice if one identifies
 a primitive $\tilde \cU$ of $u$, then $\Vert u\Vert _{-1, w}^2 \le \int_\bbS \tilde \cU ^2 w$.
 This is just because $\tilde \cU = \cU +c$ for some $c \in \bbR$ 
 and $\int_\bbS \tilde \cU ^2 w= \int_\bbS  \cU ^2 w + c^2 \int_\bbS w$.
 \end{rem}
  \medskip
  
The reason for introducing weighted $H_{-1}$ spaces is because, as one can readily verify,
$L_q$, given in \eqref{eq:Lq}, is symmetric in $H_{-1,1/q}$. A deeper
analysis (cf. \cite{cf:BGP}) shows that $L_q$ is essentially self-adjoint, with
compact resolvent. The spectrum of $-L_q$ lies in $[0, \infty)$, there is
an eigenvalue $\gl_0=0$ with one dimensional eigenspace generated by $q'$.
We therefore denote the set of eigenvalues of $-L_q$
as $\{\gl_0, \gl_1, \ldots \}$, with $\gl_1>0$ and $\gl_{j+1} \ge \gl_j$ for $j=1,2, \ldots$.
The set of eigenfunctions is denoted by $\{e_j\}_{j=0,1, \ldots}$ and let us
point out that it is straightforward to see that $e_j \in C^\infty (\bbS; \bbR)$.
Moreover, if $u \in C^2(\bbS; \bbR)$ 
is even (respectively, odd),  then 
 $L_q u$ is even   (respectively, odd): the notion of parity is of course the one obtained by observing that 
 $u \in C^2(\bbS; \bbR)$ can be  extended to a periodic function in $C^2(\bbR; \bbR)$. This implies that
 one can choose $\{e_j\}_{j=0,1, \ldots}$ with $e_j$ that is  either even or odd, and we will do so. 
\medskip

\begin{rem}
\label{rem:Lpsi}
By rotation symmetry  the eigenvalues do not depend on the choice of
$q(\cdot)\in M$, but the eigenfunctions do depend on it, even if in a rather
trivial way: the eigenfunction of
$L_{q_\psi}$ and $L_{q_{\psi'}}$ just differ by a rotation
of $\psi'-\psi$. We will often need to be precise about the choice of $q(\cdot)$
and for this it is worthwhile to introduce the notations
\begin{equation}
L_\psi \, :=\, L_{q_{\psi}} \ \text{ and } \  -L_\psi e_{\psi, j}\, =\, \gl_j  e_{\psi, j}\, .
\end{equation}
The eigenfunctions are normalized in $H_{-1, 1/q_{\psi}}$. 
\end{rem}

\medskip

\begin{rem}
\label{rem:compute}
Some expressions involving weighted $H_{-1}$ norms can be worked out explicitly.
For example a recurrent expression in what follows is $(u, q')_{1, 1/q}$, for $u \in H_{-1}$
and $q \in M$. If $\cU$ is the primitive of $u$ such that $\int_\bbS \cU /q =0$, then
we have $(u, q')_{1, 1/q}= \int_\bbS \cU (q-c)/q= \int_\bbS \cU$, where $c$ is uniquely defined
by $\int_\bbS (q-c)/q=0$, but of course the explicit value of $c$ is not used in the final expression. 
In practice however it may be more straightforward to use an arbitrary primitive $\tilde \cU$ of $u$ (i.e. $\int_\bbS \tilde \cU /q$ is not necessarily zero) for which we have
\begin{equation}
(u, q')_{1, 1/q}\, =\, 
\int_\bbS \tilde \cU \left(1 - \frac c q\right)\, .
\end{equation}
Since now $c$ appears, let us make it explicit:
\begin{equation}
c\, = \, \frac {2\pi}{\int_\bbS 1/q}\, =\,
\frac 1{2\pi I_0^2(2Kr)}\,.
\end{equation}
\end{rem}

%Dirichlet form (needed?)

\subsection{About the manifold $M$}
\label{sec:Manif}

As we have anticipated, we look at the set of stationary solutions $M$, defined in
\eqref{sec:Manif}, as a manifold. For this we introduce 
\begin{equation}
\label{eq:Htilde}
\tilde H_{-1}\, :=\, \left\{ \mu:\, \mu - \frac 1{2\pi } \in H_{-1}\right\}\, ,
\end{equation}
which is a metric space equipped with the distance inherited from $H_{-1}$, that is
$\textrm{dist}(\mu_1,\mu_2)= \Vert \mu_1-\mu_2 \Vert_{-1}$.
 We have $M \subset \tilde H_{-1}$ and $M$ can be viewed as
a smooth one dimensional manifold in  $\tilde H_{-1}$. The tangent space at $q \in M$ is $q' \bbR$
and for every $u \in H_{-1}$ we define the projection  $P^o_q$ on this tangent space
as $P^o_q u= (u, q')_{-1,1/q}q'/(q',q')_{-1,1/q}$. 
The following result is proven in \cite[p. 501]{cf:SellYou}
(see also \cite[Lemma 5.1]{cf:GPPP}): 
\medskip

\begin{lemma}
\label{lem:def proj on M}
There exists $\gs>0$ such that for all $p\in N_\gs$ with
\begin{equation}
\label{eq:Ngs}
N_\gs\,:=\, \cup_{q \in M}\left\{ \mu \in  \tilde H_{-1} : \, \Vert \mu -q \Vert_{-1} < \gs \right\}\, ,
\end{equation}
there is one and only one $q=:v( \mu )\in M$ such that $(\mu-q,q')_{-1,1/q}=0$. Furthermore, 
the mapping $\mu\mapsto v( \mu )$ 
is in $C^\infty(\tilde H_{-1},\tilde H_{-1})$, and (with $D$ the Fr\'echet derivative) 
\begin{equation}
 Dv( \mu )\, =\, P^o_{v( \mu )}\, .
\end{equation}
\end{lemma}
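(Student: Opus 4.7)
The plan is to apply the implicit function theorem (IFT) to a scalar defect function encoding the orthogonality condition, use compactness of the parameter circle $\bbS$ to obtain a uniform tubular width $\gs$, and finally read off the derivative formula by implicit differentiation. First I would parametrize $M$ by the $C^\infty$ embedding $\bbS\ni\psi\mapsto q_\psi\in\tilde H_{-1}$ (smooth since $q\in C^\infty(\bbS)$ is strictly positive) and introduce
$$
F:\tilde H_{-1}\times\bbS\to\bbR,\qquad F(\mu,\psi)\,:=\,(\mu-q_\psi,q'_\psi)_{-1,1/q_\psi}\, .
$$
Using the concrete representation of the weighted $H_{-1}$ inner product in terms of primitives (Remark~\ref{rem:compute}), one checks that $F$ is jointly $C^\infty$: affine in $\mu$, and smooth in $\psi$ because $q_\psi$, $q'_\psi$ and the weight $1/q_\psi$ all depend smoothly on $\psi$, while the weight stays bounded away from $0$ and $\infty$.

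Next I would verify the transversality needed by the IFT at every point of $M$. Using $\partial_\psi q_\psi=-q'_\psi$, at $\mu=q_{\psi_0}$ and $\psi=\psi_0$ the terms containing the vanishing factor $\mu-q_\psi$ drop out, and
$$
\partial_\psi F(q_{\psi_0},\psi_0)\,=\,(q'_{\psi_0},q'_{\psi_0})_{-1,1/q_{\psi_0}}\,=\,\|q'_{\psi_0}\|^2_{-1,1/q_{\psi_0}}\,>\,0\, .
$$
The IFT then furnishes, for each $\psi_0\in\bbS$, radii $\delta_{\psi_0},\gep_{\psi_0}>0$ and a $C^\infty$ map $\mu\mapsto\psi_{\psi_0}(\mu)$ on the ball $\{\|\mu-q_{\psi_0}\|_{-1}<\delta_{\psi_0}\}$ producing the unique solution of $F(\mu,\psi)=0$ in $(\psi_0-\gep_{\psi_0},\psi_0+\gep_{\psi_0})$. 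Compactness of $\bbS$ yields a finite subcover, and on overlaps the local solutions must coincide by local uniqueness (since $\partial_\psi F>0$ persists on a slightly larger neighborhood of $M$, by continuity). Choosing $\gs$ smaller than the minimum of the $\delta_{\psi_0}$'s, the locally defined maps patch into a globally well defined, unique and $C^\infty$ map $v(\mu):=q_{\psi(\mu)}$ on $N_\gs$.

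For the derivative, implicit differentiation of the identity $F(\mu,\psi(\mu))\equiv 0$ gives
$$
D\psi(\mu)[h]\,=\,-\frac{(h,q'_{\psi(\mu)})_{-1,1/q_{\psi(\mu)}}}{\partial_\psi F(\mu,\psi(\mu))}\, ,
$$
so $Dv(\mu)[h]=-q'_{\psi(\mu)}D\psi(\mu)[h]$ is automatically rank one with range $\bbR q'_{v(\mu)}$, which coincides with the range of $P^o_{v(\mu)}$. The remaining step is to match the scalar coefficient: one has to verify that the extra contributions to $\partial_\psi F(\mu,\psi(\mu))$ coming from the $\psi$-dependence of the weight $1/q_\psi$ and of the second slot $q'_\psi$ reduce, under the orthogonality constraint defining $\psi(\mu)$, to $\|q'_{\psi(\mu)}\|^2_{-1,1/q_{\psi(\mu)}}$. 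This bookkeeping is the delicate point: one must exploit the translation-invariance of $M$ to make the stray terms collapse to a quantity already controlled by the orthogonality. The abstract form of this computation is the content of \cite[p.~501]{cf:SellYou} for normally hyperbolic invariant manifolds, and its specialization to the present setting is \cite[Lemma~5.1]{cf:GPPP}.
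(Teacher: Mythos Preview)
The paper does not give its own proof of this lemma: it simply cites \cite[p.~501]{cf:SellYou} and \cite[Lemma~5.1]{cf:GPPP}. Your implicit function theorem argument is the standard route and is essentially what one finds in those references, so the approach is correct.

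One caveat concerns the final step. You correctly identify that, for general $\mu\in N_\gs$, the denominator $\partial_\psi F(\mu,\psi(\mu))$ contains extra terms beyond $\|q'_{\psi(\mu)}\|^2_{-1,1/q_{\psi(\mu)}}$, coming from the $\psi$-dependence of the second slot $q'_\psi$ and of the weight $1/q_\psi$. However, these stray terms do \emph{not} collapse under the orthogonality constraint or by translation invariance: the constraint gives $(\mu-q_\psi,q'_\psi)_{-1,1/q_\psi}=0$, whereas the extra contributions involve $(\mu-q_\psi,q''_\psi)_{-1,1/q_\psi}$ and a weight-derivative term, which are genuinely different quantities. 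The identity $Dv(\mu)=P^o_{v(\mu)}$ therefore holds \emph{exactly} only at points $\mu\in M$, where $\mu-q_{\psi(\mu)}=0$ kills those terms; away from $M$ it is correct up to an $O(\|\mu-v(\mu)\|_{-1})$ correction. This is the statement one finds in the general theory of normally hyperbolic manifolds and is all that the paper actually uses (in particular in Lemma~\ref{lem:second order projection}, where the expansion is performed around a base point on $M$). So your argument is fine, but your last sentence overstates what the orthogonality buys you.
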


\medskip

Note that the empirical (probability) measure $\mu_{N,t}$ that describes  our system at time $t$  is in $\tilde H_{-1}$ (see Appendix~\ref{sec:A_H}) and 
Lemma~\ref{lem:def proj on M} guarantees in particular that 
as soon as it
is sufficiently 
close to $M$  there is a well defined projection $v\left(\mu_{N,t}\right)$ on the manifold. Since the manifold is 
isomorphic to $\bbS$ it is practical to introduce, for $\mu \in \tilde H_{-1}$, also $\proj (\mu)\in \bbS$,
uniquely defined by $v(\mu)=q_{\proj(\mu)}$.
It is immediate to see that the projection $\proj$ is $C^\infty(\tilde H_{-1},\bbS)$.

\subsection{A quantitative heuristic analysis: the diffusion coefficient}
\label{sec:heur}

The proof of
Theorem~\ref{th:main} is naturally split into two parts: the approach to $M$ and the motion on
$M$. The approach to $M$ is based on the properties of the PDE \eqref{eq:K}:  in \cite{cf:GPP}
it is shown, using the gradient flow structure of \eqref{eq:K}, that if the initial condition
is not on the stable manifold $U$ (see \eqref{eq:Umanif}) of the unstable stationary solution $\frac 1{2\pi}$,
then the solution converges for time going to infinity to one of the probability densities $q=q_\psi\in M$ 
(of course $\psi$ is a function of the initial condition), so given a neighborhood of $q_\psi$ after
a finite time (how large it depends only on the initial condition), it gets to the chosen neighborhood: due to the regularizing
properties of the PDE, such a neighborhood can be even in a topology that controls all the derivatives \cite{cf:GPP}, but
here there is no point to use a strong topology, since at the level of interacting diffusions we deal with a measure (that
we inject into $H_{-1}$). And in fact  
we have to estimate the distance between the empirical measure and the solution to \eqref{eq:K} --
controlling thus the effect of the noise --
 but this type of estimates 
on finite time intervals is standard. However here there is a subtle point: the result we are after is a matter of fluctuations and
it will not come as a surprise that the empirical measure approaches $M$ but does not reach it
(of course: $M$ just contains smooth functions, and $\mu_{N, t}$ is not a  function),
but it will stay in a $N^{-1/2}$-neighborhood (measured in the $H_{-1}$ norm). How long will it take to reach such a neighborhood? The approach to $M$
is actually exponential and driven by the spectral gap ($\gl_1$) of the linearized evolution operator (at least close to $M$). Therefore
in order to enter such a $N^{-1/2}$-neighborhood a time proportional to $\log N$ appears to be needed, 
as the quick observation that $\exp( -\gl_1 t)=O(N^{-1/2})$ for $t\ge \log N/(2 \gl _1)$ suggests.
 The proofs on this stage of the evolution are   
in Section~\ref{sec:approachM}: here we just stress that 
\begin{enumerate}
\item controlling the effect of the noise on the  system 
on times
$O(\log N)$ is in any case sensibly easier than controlling it on times of order $N$, which is our final aim;
\item 
on times of order $N$ it is no longer a matter of showing that the empirical measure stays close to the solution
of the PDE: on such a time scale the noise takes over and the finite $N$ system, which has a non-trivial (random) dynamics, substantially deviates 
 from the behavior
of the solution to the PDE, which just converges to one of the stationary profiles. 
\end{enumerate}

\medskip

Let us therefore assume that the empirical measure is in a $N^{-1/2}$-neighborhood of a given $q=q_\psi$. 
It is reasonable to assume that the  dominating part of the dynamics close to $q$ is captured by 
the operator $L_q$ and we want to understand the action of the semigroup generated by $L_q$ 
on the noise that stirs the system, on long times. Note that we cannot choose arbitrarily long times, in particular
not times proportional to $N$ right away, because in view of the result we are after 
the stationary profile $q$ around which we linearize changes of an order one amount. We will actually choose 
some intermediate time scale $N^{1/10}$  as we will see in \S~\ref{sec:sheme} and 
Remark~\ref{rem:choice T zeta}{,  that guarantees  that
working with $L_q$ makes sense, i.e. that the projection of the empirical measure on $M$ is still sufficiently
close to $q$. The point is that the effect of the noise on intermediate times is very different in the tangential direction 
and the orthogonal directions to $M$, simply because in the orthogonal direction there is a damping, that is absent 
in the tangential direction. So on intermediate times the the leading term in the evolution of the empirical measure
turns out to be the projection of the evolution on the tangential direction, that is
$( q',\mu_{N,t}-q)_{-1,1/q}/  \Vert q'\Vert_{-1,1/q}$. One can now use Remark~\ref{rem:compute} to obtain
\begin{equation}
\label{eq:forI1}
\left( q',\mu_{N,t}-q\right)_{-1,1/q}
\, =\,  - \int_\bbS \cK  (\theta) \left( \mu_{N,t} (\dd \theta) - q(\theta) \dd \theta\right)\, ,
\end{equation} 
with $\cK$ a primitive of $1-c/q$ ($c$ given in   Remark~\ref{rem:compute}).
By applying It\^o's formula 
we see that the term in \eqref{eq:forI1} can be written as the sum of a drift term and of a martingale term.
It is not difficult to see that to leading order the drift term is zero (a more attentive analysis shows that one has to show that
the next order correction does not give a contribution, but we come back to this below).
The quadratic variation of the martingale term instead turns out to be equal to $t/N$ times
\begin{equation}
\label{eq:heur2}
 \int_{\bbS}(\cK' (\theta))^2 q(\theta) \dd \theta \, =\, 1- \frac{(2\pi)^2}{\int_\bbS 1/q}\, =\,  \Vert q'\Vert_{-1,1/q}^2\, .
 \end{equation}
 Since $q_{\psi+\gep}= q_\psi -  \gep q'_\psi + \cdots$ (note that $q'_\psi$ is not normalized),  
\eqref{eq:heur2} suggests that the diffusion coefficient $D_K$ in Therem~\ref{th:main} is
$\Vert q'\Vert^{-1}_{-1,1/q}$, 
which coincides  with \eqref{eq:DK}. 

To make this procedure work one has to carefully put together the analysis on the intermediate time scale, by setting up an 
adequate iterative scheme. Several delicate issues arise and one of the challenging points
is precisely to control  that the drift can be neglected. In fact the first order expansion
of the projection that we have used
\begin{equation}
 \proj\left(q_\psi+h\right)\, =\, \psi -\frac{( h, q')_{-1, 1/q}}{( q', q')_{-1,1/q}}+ O ( \Vert h \Vert_{-1}^2)\, ,
 \end{equation}
is not accurate enough and one has to go to the next order, see Lemma~\ref{lem:second order projection}. 
This is due to the fact that the random contribution, which in principle appears as first order,  fluctuates and generates 
a cancellation, so in the end the term is of second order. %This forces us to go to higher order also fro the deterministic part of the drift and show that there is no contribution.  
\medskip

\begin{rem}
\label{rem:fromU}
It is natural to expect that  Theorem~\ref{th:main} holds true also when $p_0\in U$ and this is just because 
the evolution is attracted to $\frac 1{2\pi}$ and then the noise will cause an escape from this unstable profile
after a time $\propto \log N$, since the exponential instability will make the fluctuations grow exponentially 
with a rate which is just given by the linearized dynamics (linearized around $\frac 1{2\pi}$ of course). 
Arguments in this spirit can be found for example in \cite[Ch.~5]{cf:Errico}, see \cite{cf:Bak} and references therein  for the finite dimensional counterpart. However 
\begin{enumerate}
\item this is not so straightforward because it requires a good control on
the dynamics on and around the heteroclinic orbits linking $\frac 1{2\pi}$ to $M_0$
\cite[Section~5]{cf:GPP};
\item the statement would require more details about the initial condition: the simple convergence 
to a point on $U$ is largely non sufficient (the fluctuations of the initial conditions now matter!); 
\item in general the initial phase $\psi_0$ on $M$ is certainly going to be random: if the initial condition
is rotation invariant (at least in law), like if $\{\gp_0^{j,N}\}_{j=1, \ldots, N}$ are IID variables uniformly 
distributed on $\bbS$ or if $\gp_0^{j,N}= 2\pi j/N$, one  expects $\psi_0$ to be uniformly distributed
on $\bbS$. Note however  that uniform distribution of $\psi_0$ is definitely  not expected in the general case
and asymmetries in the initial condition should affect the distribution of $\psi_0$.
\end{enumerate}
\end{rem}

\subsection{The iterative scheme}
\label{sec:sheme}
As we have explained in \S~\ref{sec:heur}, the analysis close to $M$ requires an iterative procedure,
which we introduce here. We assume that at $t=0$ the system is already close to $M$, while in practice this will happen 
after some time: in Section~\ref{sec:proofmain} we explain how to put together the results on the early stage 
of the evolution and the analysis close to $M$, that we start here. So, for $\mu_0=\mu_{N,0}=\frac{1}{N}\sum_{j=1}^N \gd _{\varphi^{j,N}_0}$ 
such that $\text{dist}(\mu_0,M)\le \gs$ (here and below 
$\text{dist}(\cdot, \cdot)$ is the distance built with the norm of $H_{-1}$), by Lemma~\ref{lem:def proj on M} we can define $\psi_0\, =\, \proj (\mu_0)$. Applying the It\^o formula to $\nu_t=\mu_t-q_{\psi_0}$, we see that
\begin{equation}
\label{eq:nu1}
\nu_{t}\, =\, e^{-t L_{\psi_0}} \nu_0-\int_0^{t} e^{-(t -s)L_{\psi_0}}\partial_\theta[\nu_s J*\nu_s]\dd s + Z_{t}\, ,
\end{equation}
where
\begin{equation}
\label{eq:Z1}
 Z_t\, =\, \frac1N \sum_{j=1}^N \int_0^t \partial_{\theta'}\cG^{\psi_0}_{t-s}\left(\theta,\varphi^{j,N}_s\right)\dd W^j_s\, ,
\end{equation}
and $\cG^{\psi_0}_s(\theta,\theta')$ is the kernel of $e^{-sL_{\psi_0}}$ in $\bbL^2$.
The evolution equation \eqref{eq:nu1} and the noise term \eqref{eq:Z1}
have a meaning in $H_{-1}$, as well as the recentered empirical measures $\nu_t$, and it is in this sense that we will use them: we detail this in
Appendix~\ref{sec:A_H}, where one finds also an explicit expression and some basic facts
about the kernel $\cG^{\psi_0}_s(\theta,\theta')$. We have started here an abuse of notation
that will be persistent through the text: $\partial_{\theta'}\cG^{\psi_0}_{t-s}\left(\theta,\varphi^{j,N}_s\right)$
stands for $\partial_{\theta'}\cG^{\psi_0}_{t-s}\left(\theta,\theta'\right)\vert _{\theta'=\varphi^{j,N}_s}$.

Equations \eqref{eq:nu1}--\eqref{eq:Z1}
are useful tools as long as we can properly define the phase 
associated to the empirical measure of the system and that this phase is close to $\psi_0$: 
in view of the result we want to prove, this is expected to be true for a long time,
but it is certainly expected to fail for times of the order of $N$, since on this timescale the phase 
does change of an amount that does not vanish as $N$ becomes large.

The idea is therefore  
 to divide the evolution of the particle system up to 
 a final time proportional to $N$ 
 into $n=n_N\stackrel{N \to \infty}\longrightarrow \infty$ time intervals
$[T_i, T_{i+1}]$, where  $T_i=iT$ and $T=T(N)$
is chosen close to a fractional power of $N$ (see Remark \ref{rem:choice T zeta}).
Moreover $i$ runs from $1$ up to $n=n_N$ so that $n_N T(N)= T_{n_N}$ and $\lim_N T_{n_N}/N$ is equal to a positive
constant
(the $\tau_f$ of Theorem~\ref{th:main}).
%(without loss of generality we can assume this constant equal to $1$).
 If the empirical measure $\mu_t$ stays close to the manifold $M$, we can define the
 projections of $\mu_{T_k}$ and successively update the 
 reentering phase at all times $T_k$. The point then will be essentially to show that the process
 given by these phases, on the time scale $\propto N$,
   converges to a Brownian motion. 
   
   More formally, we construct the following iterative scheme: we choose
 \begin{equation}
 \label{eq:sigma}
 \gs=\gs_N := \lceil N^{2 \zeta} \sqrt{T/N}\rceil\stackrel{N\to \infty} \longrightarrow 0\, ,
 \end{equation}
 $\zeta>0$ (see Remark~\ref{rem:choice T zeta}), we set
$\tau^0_{\gs_N}=0$
and for $k=1,2, \ldots$ we define
\begin{equation}
 \psi_{k-1}\, :=\, \proj(\mu_{T_{k-1}})\, ,  %\ \text{ and } \  \nu^{k}_{T_{k-1}}\, :=\, \mu_{T_{k-1}}-q_{\psi_{k-1}}\, ,
\end{equation}
if $\text{dist}(\mu_{T_{k-1}},M)\le \gs_N$ 
and 
\begin{equation}
 \tau^k_{\gs_N}=\tau_{\gs_N}^{k-1}\ind_{\{\tau_{\gs_N}^{k-1}<T_{k-1}\}}+\inf\{s\in[T_{k-1}, T_{k}],\, \Vert \mu_s-q_{\psi_{k-1}}\Vert_{-1}>{\gs_N}\}\ind_{\{\tau_{\gs_N}^{k-1}\geq T_{k-1}\}}\, .
\end{equation}
Then we set 
\begin{equation}
\nu_t^{k}:= \mu_t - q_{\psi_{k-1}}\, ,
\end{equation}
 for 
 $t\in [T_{k-1},T_{k}]$ and $t\le   \tau_{\gs_N}^{k}$, and otherwise
 $\nu_t^{k}:=\nu_{ \tau_{\gs_N}^{k}}^k$ for every $t\ge \tau_{\gs_N}^{k}$
 (of course $\tau_{\gs_N}^{k}$ can be smaller than $T_{k-1}$ and, in this case,
 the definition becomes redundant). Therefore the $\nu$ process we have just defined
 solves for $t\in [T_{k-1},T_{k}]$
\begin{multline}
\label{eq:ito nu^k}
 \nu^{k}_{t}\, =\, 
 \ind_{\{\tau^k_{\gs_N}<T_{k-1}\}}\nu^k_{T_{k-1}} +  \ind_{\{\tau^k_{\gs_N}\geq T_{k-1}\}} \times \\
\left(e^{-(t\wedge \tau^k_{\gs_N}-T_{k-1}) L_{\psi_{k-1}}} \nu^{k}_{T_{k-1}}-\int_{T_{k-1}}^{t\wedge \tau^k_{\gs_N}} e^{-(t\wedge \tau^k_{\gs_N} -s)L_{\psi_{k-1}}}\partial_\theta[\nu^{k}_s J*\nu^{k}_s]\dd s +Z^{k}_{t\wedge \tau^k_{\gs_N}}\right)\, ,
\end{multline}
where
\begin{equation}
\label{eq:Zkt0}
 Z^k_{t}\, =\, \frac1N \sum_{j=1}^N \int_{T_{k-1}}^t \partial_{\theta'}\cG^{\psi_{k-1}}_{t-s}\left(\theta,\varphi^{j,N}_s\right)\dd W^j_s\, .
\end{equation}
Once again, we refer to Appendix~\ref{sec:A_H} for the precise meaning of 
\eqref{eq:ito nu^k} and \eqref{eq:Zkt0}.

\begin{rem}
\label{rem:choice T zeta}
For the remainder of the paper we choose $T(N)\sim N^{1/10}$ and 
  $\zeta\le 1/100$. The two exponents do not have any particular meaning: a look at the
  argument shows that the exponent for $T(N)$ has in any case to be chosen smaller than $1/2$, but then
  a number of technical estimates enter the game and we have settled for a value $1/10$ without
  trying to get the optimal value that comes out of the method we use.
\end{rem}

\section{A priori estimates: persistence of proximity to $M$}
\label{sec:apriori}
The aim of this section is to prove that, if we are (say, at time zero) sufficiently close to $M$, 
we stay close to $M$  for times $O(N)$. The arguments in this section justify the
choice of the proximity parameter
$\gs_N$ that we have made in the iterative scheme.
We first prove some estimates on the size of the noise term and then we will
give the estimates on the empirical measure.

 \subsection{Noise estimates}
We define the event
\begin{multline}
\label{eq:BupN}
 B^N\, =\, \left\{ \sup_{1\leq k\leq n-1}\sup_{ t \in [T_k,T_{k+1}]} \left\Vert Z^k_t \right\Vert_{-1}\leq \sqrt{\frac{T}{N}}N^\gz\right\}\\
\bigcap
\left\{ \sup_{1\leq k\leq n-1}\sup_{ t \in [T_k,T_{k+1}]} \left\Vert Z^{k,\perp}_t \right\Vert_{-1}\leq \frac{1}{\sqrt{N}}N^\gz\right\}\, ,
\end{multline}
where $Z^{k,\perp}_t $ is defined precisely like $Z^{k}_t $, see  \eqref{eq:Zkt0}, 
except for the replacement of  $\cG_{t-s}^{\psi}(\cdot, \cdot)$ with $\cG_{t-s}^{\psi}
(\theta, \theta')-e_{\psi_{k-1},0}(\theta)
f_{\psi_{k-1},0}(\theta' )$.
\medskip

\begin{lemma}
\label{lem:bound Zk} 
$\lim_{N \to \infty} \bbP \left(B^N\right) =1$.
\end{lemma}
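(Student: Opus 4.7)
\medskip

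\noindent\textbf{Proof plan.} The plan is to obtain polynomial-in-$N$ high-moment bounds on $\|Z^k_t\|_{-1}$ and $\|Z^{k,\perp}_t\|_{-1}$ at each fixed $(k,t)$, extend them to the supremum over $t\in[T_k,T_{k+1}]$ by a maximal inequality, and finally take a union bound over the $n_N \sim N/T$ values of $k$. Since $\psi_{k-1}$ is $\cF_{T_{k-1}}$--measurable, I may condition and treat $\psi_{k-1}$ as deterministic, using continuity of the spectral data of $L_\psi$ in $\psi\in\bbS$ (which is compact) to make all constants uniform in $\psi_{k-1}$.

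First I decompose in the $L_{\psi_{k-1}}$--eigenbasis. Writing the kernel spectrally as $\cG^\psi_\tau(\theta,\theta')=\sum_{i\ge 0} e^{-\gl_i \tau} e_{\psi,i}(\theta) f_{\psi,i}(\theta')$ with $(f_{\psi,i})$ the biorthogonal family, one has
\begin{equation*}
Z^k_t\, =\, \sum_{i\ge 0} e_{\psi_{k-1},i}\,\xi^k_i(t), \qquad \xi^k_i(t)\, :=\, \frac1N\sum_{j=1}^N\int_{T_{k-1}}^t e^{-\gl_i(t-s)}\partial_{\theta'} f_{\psi_{k-1},i}(\gp^{j,N}_s)\dd W^j_s,
\end{equation*}
and, since $(e_{\psi,i})$ is orthonormal in $H_{-1,1/q_\psi}$, one has $\|Z^k_t\|_{-1,1/q_{\psi_{k-1}}}^2=\sum_i |\xi^k_i(t)|^2$, while $Z^{k,\perp}_t$ corresponds to dropping the $i=0$ contribution. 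The process $e^{\gl_i t}\xi^k_i(t)$ is a genuine martingale, and BDG gives, for any integer $p\ge 1$,
\begin{equation*}
\bbE\bigl[|\xi^k_i(t)|^{2p}\bigr]\, \le\, C_p\,\Ninf{\partial f_{\psi_{k-1},i}}^{2p}\cdot
\begin{cases}(2\gl_i N)^{-p} & i\ge 1,\\ (T/N)^p & i=0.\end{cases}
\end{equation*}
Using Weyl-type asymptotics $\gl_i\gtrsim i^2$ for the Schrödinger-like operator $L_\psi$ and polynomial bounds on $\Ninf{\partial f_{\psi,i}}$ (both uniform in $\psi\in\bbS$), the $i$-sum converges and yields
\begin{equation*}
\bbE\bigl[\|Z^k_t\|_{-1}^{2p}\bigr]\, \le\, C_p\,(T/N)^p,\qquad \bbE\bigl[\|Z^{k,\perp}_t\|_{-1}^{2p}\bigr]\, \le\, C_p\,N^{-p},
\end{equation*}
uniformly in $k$ and in $t\in[T_k,T_{k+1}]$ (equivalence of the weighted and unweighted $H_{-1}$ norms is by $\S\ref{sec:linear}$).

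To turn these fixed-$t$ estimates into a sup bound I then control the modulus of continuity. The cleanest route is to split, for $t\in[T_k,T_{k+1}]$,
\begin{equation*}
Z^k_t\, =\, e^{-(t-T_k)L_{\psi_{k-1}}}Z^k_{T_k}+ \frac1N\sum_{j=1}^N\int_{T_k}^{t}\partial_{\theta'}\cG^{\psi_{k-1}}_{t-s}(\cdot,\gp^{j,N}_s)\dd W^j_s,
\end{equation*}
where the first term is contractive in $H_{-1}$ (strictly contractive on the orthogonal component), and the second is bounded via the same spectral--BDG mechanism after a Kolmogorov--Čentsov argument: a bound of the form $\bbE\|Z^k_t-Z^k_{t'}\|_{-1}^{2p}\le C_p|t-t'|^\alpha(T/N)^{p-\alpha}$ (obtained by separating the pre-$t'$ part, where one bounds the increment of the semigroup, from the fresh stochastic integral on $[t',t]$) together with Garsia--Rodemich--Rumsey upgrades the fixed-$t$ moment bounds to sup bounds of the same order, at the price of a constant depending on $p$.

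Finally, a union bound concludes. By Markov,
\begin{equation*}
\bbP\Bigl(\sup_{t\in[T_k,T_{k+1}]}\|Z^k_t\|_{-1}>N^\gz\sqrt{T/N}\Bigr)\,\le\, C_p N^{-2p\gz},
\end{equation*}
and similarly $C_p N^{-2p\gz}$ for the orthogonal bound. Since $n_N\sim N/T\le N$, choosing $p$ so large that $2p\gz>2$ (possible as $\gz\le 1/100$, cf.\ Remark~\ref{rem:choice T zeta}) makes the sum over $k\in\{1,\dots,n_N-1\}$ tend to $0$, proving $\bbP(B^N)\to 1$.

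The main obstacle is obtaining uniform-in-$\psi_{k-1}$ quantitative spectral input: namely the eigenvalue lower bound $\gl_i \gtrsim i^2$ and polynomial $\bbL^\infty$--control on the derivatives $\partial f_{\psi,i}$, needed for the spectral series to converge after raising to the $p$--th power. This is handled by viewing $L_\psi$ as a small compact perturbation of $\tfrac12\partial_\theta^2$ on $\bbS$ and using classical elliptic regularity plus continuity in $\psi\in\bbS$. The remaining subtlety is the $t$-dependence of the integrand (the semigroup weight $e^{-\gl_i(t-s)}$ depends on $t$), which prevents a direct application of a maximal inequality to $Z^k_t$ itself; the splitting above bypasses this by reducing to a genuine Hilbert-space-valued martingale on $[T_k,t]$.
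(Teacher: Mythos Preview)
Your proposal is correct and follows essentially the same architecture as the paper: spectral decomposition in the $L_{\psi_{k-1}}$--eigenbasis, high--moment bounds on the coefficients, Garsia--Rodemich--Rumsey to pass to the supremum in $t$, and a union bound over $k$. The one genuine variation is in how you reach the $2p$--th moment bounds: you apply BDG to each mode $\xi^k_i$ and then sum via Minkowski, whereas the paper expands $\|Z^k_t\|_{-1,1/q}^{2m}$ as a multiple sum over modes and Brownian indices and reduces it by It\^o isometry and Cauchy--Schwarz to products of diagonal terms $\hat I_k$. Your route is slightly cleaner and yields the same orders $(T/N)^p$ and $N^{-p}$; the paper's combinatorial expansion has the minor advantage that it directly produces the increment bounds $\bbE\|Z^k_t-Z^k_s\|_{-1}^{2m}\le C\,h_1^m(t-s)/N^m$ and $\bbE\|Z^{k,\perp}_t-Z^{k,\perp}_s\|_{-1}^{2m}\le C\,h_2^m(t-s)/N^m$ needed for GRR, without the extra semigroup splitting you introduce.

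One point to sharpen: ``polynomial $\bbL^\infty$--control on $\partial f_{\psi,i}$'' is too loose. Your Minkowski sum $\sum_{i\ge1}\|f'_{\psi,i}\|_\infty^2/\gl_i$ converges only if the growth is strictly slower than $i^{1/2}$. In fact the paper's Appendix~\ref{sec:appB} (Corollary~\ref{cor:fj}) proves the much stronger $\sup_i\|f'_{\psi,i}\|_\infty<\infty$, and this is not quite a textbook consequence of viewing $L_\psi$ as a perturbation of $\tfrac12\partial_\theta^2$, because of the nonlocal term $q_\psi J\!*\!u$; the paper carries out the perturbative expansion explicitly for this reason. You should cite those results rather than invoke generic elliptic regularity.
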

\medskip

\begin{proof}
In order to perform the estimates we introduce and work with approximated versions of
$Z^{k}_t $ and $Z^{k,\perp}_t $ (see Lemma~\ref{th:add2}).
Define for $T_{k-1}<t'<t$
\begin{equation}
\label{eq:M.1}
 Z^k_{t,t'}=\frac1N \sum_ {j=1}^N\int_{T_{k-1}}^{t'}\partial_{\theta'}\cG^{\psi_{k-1}}_{t-s}(\theta,\varphi^{j,N}_s)\dd W^j_s\, .
\end{equation}
The kernel $\cG^{\psi_{k-1}}_\cdot$ in this case is (cf. Appendix~\ref{sec:A_H})
\begin{equation}
 \cG^{\psi_{k-1}}_s(\theta,\theta')\, =\, \sum_{l=0}^\infty e^{-s\gl_l}e_{\psi_{k-1},l}(\theta)f_{\psi_{k-1},l}(\theta')\, ,
\end{equation}
where $\gl_l$ are the ordered eigenvalues of $-L_{\psi_{k-1}}$, $e_{\psi_{k-1},l}$ are the associated eigenfunctions of unit norm in $H_{-1,1/{q_{\psi_{k-1}}}}$,
cf. Remark~\ref{rem:Lpsi},  and $f_{\psi_{k-1},l}$ are the eigenfunctions
of  $L_{\psi_{k-1}}^*$, the adjoint in $\bbL ^2$ (see   Appendix~\ref{sec:A_H}). 

Very much in  in the same way we define
\begin{equation}
\label{eq:M-ort}
  Z_{t,t'}^{k,\perp}=\frac1N \sum_ {j=1}^N\int_{T_{k-1}}^{t'}\partial_{\theta'} \cG^{\psi_{k-1},\perp}_{t-s}(\theta,\varphi^{j,N}_s)\dd W^j_s\, ,
\end{equation}
with
\begin{equation}
   \cG^{\psi_{k-1},\perp}_s(\theta,\theta')\, =\, \sum_{l=1}^\infty e^{-s\gl_l}e_{\psi_{k-1},l}(\theta)f_{\psi_{k-1},l}(\theta')\, .
\end{equation}
We decompose for $T_{k-1}<s'<s<t$ and $s'<t'<t$
\begin{multline}
\label{eq:decompose}
 Z^k_{t,t'}-Z^k_{s,s'}\, =\, \frac1N \sum_ {j=1}^N\int_{T_{k-1}}^{s'}\left(\partial_{\theta'}\cG^{\psi_{k-1}}_{t-u}(\theta,\varphi^{j,N}_u)-\partial_{\theta'}\cG^{\psi_{k-1}}_{s-u}(\theta,\varphi^{j,N}_u)\right)\dd W^j_u \\
+\frac1N \sum_ {j=1}^N\int_{s'}^{t'}\partial_{\theta'}\cG^{\psi_{k-1}}_{t-u}(\theta,\varphi^{j,N}_u)\dd W^j_u\, ,
\end{multline}
and an absolutely analogous formula holds for  $Z^{k,\perp}$: in fact the
bounds for $Z^{k}$ and $Z^{k,\perp}$  are obtained with the same technique even if the results
are slightly different due to the presence of the zero eigenvalue in $Z^{k}$.
Moreover we apply
$
 \Vert a+b\Vert^2\,  \leq \, 2(\Vert a\Vert^2 + \Vert b\Vert^2)$, so that we can estimate the two terms in the right-hand side of \eqref{eq:decompose} separately.
 
And we start with
the second term of the right-hand side in \eqref{eq:decompose}: by the orthogonality properties
of the eigenvectors we obtain 
\begin{multline}
\label{eq:norm 2 second term}
 \left\Vert \frac1N \sum_ {j=1}^N\int_{s'}^{t'}\partial_{\theta'}\cG^{\psi_{k-1}}_{t-u}(\cdot,\varphi^{j,N}_u)\dd W^j_u\right\Vert_{-1,1/q}^2\\
 =\, \frac{1}{N^2} \sum_{l=0}^\infty \sum_{j,j'=1}^N \int_{s'}^{t'}\int_{s'}^{t'} e^{-(2t-u-u')\gl_l}f'_{\psi_{k-1},l}(\varphi^{j,N}_u)f'_{\psi_{k-1},l}(\varphi^{j',N}_{u'})\dd W^j_u\dd W^{j'}_{u'}\, ,
\end{multline}
and by taking the expectation
\begin{multline}
\label{eq:int-step4.1-0}
 \bbE\left[ \left\Vert \frac1N \sum_ {j=1}^N\int_{s'}^{t'}\partial_{\theta'}\cG^{\psi_{k-1}}_{t-u}(\cdot,\varphi^{j,N}_u)\dd W^j_u\right\Vert_{-1,1/q}^2\right]\, =\\
  \frac{1}{N^2}\sum_{l=0}^\infty \sum_{j=1}^N \int_{s'}^{t'}e^{-2(t-u)\gl_l}\bbE\left[(f_{\psi_{k-1},l}'(\varphi^{j,N}_u))^2\right]\dd u\, .
\end{multline}
By Corollary \ref{cor:fj}  
there exists a constant $C_1$ such that
\begin{equation}
\label{eq:int-step4.1}
 \bbE\left[ \left\Vert \frac1N \sum_ {j=1}^N\int_{s'}^{t'}\partial_{\theta'}\cG^{\psi_{k-1}}_{t-u}(\cdot,\varphi^{j,N}_u)\dd W^j_u\right\Vert_{-1,1/q}^2\right]\,
\leq \, \frac{C_1}{N}\sum_{l=0}^\infty \int_{s'}^{t'}e^{-2(t-u)\gl_l}\dd u\, .
\end{equation}
Proposition \ref{prop:eigenvalues and eigenfunction expansion}, Remark~\ref{rem:eigenvalues and eigenfunction expansion}, leads us to
\begin{equation}
\label{eq:peig2}
 \sum_{l=0}^\infty \int_{s'}^{t'}e^{-2(t-u)\gl_l}\dd u\, \leq\, \sum_{l=0}^\infty \int_{s'}^{t'}e^{-(t-u)\frac{l^2}{C}}\dd u
\, \leq\, C\sum_{l=0}^\infty \frac{1}{l^2}\left(1-e^{-(t'-s')\frac{l^2}{C}}\right)\, ,
\end{equation}
where the addend  with $l=0$ (times $C$) has to be read as $t'-s'$. 
The right-most term in \eqref{eq:peig2} for $t'-s'\ge 1$
can be bounded by $C(t'-s')+ C\sum_{l=1}^\infty 1/l^2 \le 3C(t'-s')$. Instead for $t'-s'<1$
 we decompose the same term and then estimate as follows:
\begin{multline}
%C\sum_{l=0}^\infty \frac{1}{l^2}\left(1-e^{-(t'-s')\frac{l^2}{C}}\right)\, =\,  
C\sum_{l=0}^{\left\llcorner(t'-s')^{-1/2}\right\lrcorner} \frac{1}{l^2}\left(1-e^{-(t'-s')\frac{l^2}{C}}\right)+C\sum_{l=\left\llcorner(t'-s')^{-1/2}\right\lrcorner+1}^\infty \frac{1}{l^2}\left(1-e^{-(t'-s')\frac{l^2}{C}}\right)
\\
\leq\, \sum_{l=0}^{\left\llcorner(t'-s')^{-1/2}\right\lrcorner} (t'-s')+\sum_{l=\left\llcorner(t'-s')^{-1/2}\right\lrcorner+1}^\infty \frac{C}{l^2}\, \le \, (3+2C) \sqrt{t'-s'}\, ,
\end{multline}
where for the first term we have used  $(1-\exp(-a)) \le a$, for $a\ge 0$.
%and thus we get the bound for $t'-s'$ small
%\begin{equation}
 %\sum_{l=0}^\infty \frac{1}{l^2}\left(1-e^{-(t'-s')\frac{l^2}{C}}\right)\, \leq\, C(t'-s')^{1/2}\, .
%\end{equation}
Therefore we have proven that there exists $C$ such that for every $k$
and every $s$, $s'$, $t$, $t'$ such that $T_{k-1} <s' <s<t$ and $s' <t'<t$ we have
\begin{equation}
\label{eq:res1}
 \bbE\left[ \left\Vert \frac1N \sum_ {j=1}^N\int_{s'}^{t'}\partial_{\theta'}\cG^{\psi_{k-1}}_{t-u}(\cdot,\varphi^{j,N}_u)\dd W^j_u\right\Vert_{-1,1/q}^2\right]\, \leq\,
\frac{C h_1(t'-s')}{N}\, .
\end{equation}
with 
\begin{equation}
h_1(u)\, :=\, u^{1/2}\ind_{[0,1)}(u) +u\ind_{[1,\infty)}\, .
\end{equation}
\medskip

We can do better in the case of $\cG^{\psi_{k-1},\perp}$, for which a direct inspection of the argument 
we have just presented shows that the linearly growing term in the estimate can be 
 avoided (since the term $l=0$ is no longer there) and the net result is
\begin{equation}
\label{eq:res2}
 \bbE\left[ \left\Vert \frac1N \sum_ {j=1}^N\int_{s'}^{t'}\partial_{\theta'}\cG^{\psi_{k-1},\perp}_{t-u}(\cdot,\varphi^{j,N}_u)\dd W^j_u\right\Vert_{-1,1/q}^2\right]\, \leq\,
\frac{C h_2(t'-s')}{N}\, .
\end{equation}
with $h_2$ defined as
\begin{equation}
h_2(u)\, =\,  u^{1/2}\ind_{[0,1)}(u) +\ind_{[1,\infty)}\, .
\end{equation}

\medskip

For what concerns the first term in the right-hand side of \eqref{eq:decompose}, we have
\begin{multline}
\label{eq:expr5}
 \bbE\left[ \left\Vert \frac1N \sum_ {j=1}^N\int_{T_{k-1}}^{s'}\left(\partial_{\theta'}\cG^{\psi_{k-1}}_{t-u}(\cdot,\varphi^{j,N}_u)-\partial_{\theta'}\cG^{\psi_{k-1}}_{s-u}(\cdot,\varphi^{j,N}_u)\right)\dd W^j_u\right\Vert_{-1,1/q}^2\right]\\
 =\, \frac{1}{N^2} \sum_{l=1}^\infty\sum_{j=1}^N \int_{T_{k-1}}^{s'}\left(e^{-(t-u)\gl_l}-e^{-(s-u)\gl_l}\right)^2\bbE \left[(f'_{\psi_{k-1},l}(\gp^{j,N}_u))^2\right] \dd u\, ,
\end{multline}
and, by proceeding like for \eqref{eq:int-step4.1}, we see that the expression 
in \eqref{eq:expr5} is bounded by
\begin{equation}
\label{ineq:diff moment order 2}
% \bbE\left[ \left\Vert \frac1N \sum_ {j=1}^N\int_{T_{k-1}}^{s'}\left(\partial_{\theta'}\cG^{\psi_{k-1}}_{t-u}(\theta,\varphi^{j,N}_u)-\partial_{\theta'}\cG^{\psi_{k-1}}_{s-u}(\theta,\varphi^{j,N}_u)\right)\dd W^j_u\right\Vert_{-1,1/q}^2\right]\\ \leq \, 
\frac{C_1}{N}\sum_{l=1}^\infty \frac{\left(1- e^{-\gl_l(t-s')}\right)^2}{\gl_l}
\, \le \, \frac{C_1 C}{N}\sum_{l=1}^\infty \frac{\left( e^{-(t-s')\frac{l^2}{C}}-1\right)^2}{l^2}\, .
\end{equation}
This last term is estimated once again by separating the two cases of 
$t-s'$ small and large. The net result is that there exists $C>0$ such that for every $k$,
every $s$, $s'$ and $t$ such that $T_k<s'< s<t$ we have
\begin{equation}
\label{eq:res3}
 \bbE\left[ \left\Vert \frac1N \sum_ {j=1}^N\int_{T_{k-1}}^{s'}\left(\partial_{\theta'}\cG^{\psi_{k-1}}_{t-u}(\cdot,\varphi^{j,N}_u)-\partial_{\theta'}\cG^{\psi_{k-1}}_{s-u}(\cdot,\varphi^{j,N}_u)\right)\dd W^j_u\right\Vert_{-1,1/q}^2\right] \,
\leq \, C h_2(t-s')\, .
\end{equation}

\medskip

In order to complete the proof of
Lemma~\ref{lem:bound Zk}
quadratic estimates do not suffice: we need to generalize \eqref{eq:res1}, \eqref{eq:res2} and\eqref{eq:res3} to larger exponents. We actually need 
estimates on moments of order $2m$, with $m$ finite, but sufficiently large, so to apply the standard Kolmogorov Lemma type estimates and get uniform bounds.   
We  are going to use
\begin{equation}
 \Vert a + b \Vert^m \, \leq \, m (\Vert a \Vert^m + \Vert b \Vert^m)\, ,
\end{equation}
but actually we will not track the $m$ dependence of the constants.
We aim at showing that the expectation of the moments of order $2m$ of the quantities we are interested in 
are bounded by the $m^{\mathrm{th}}$ power of the estimate we found in the quadratic case, times 
an $m$-dependent constant.

\medskip 

For $m=1,2, \ldots $, the $m^\mathrm{th}$--power of the expression in  \eqref{eq:norm 2 second term} gives
\begin{multline}
\label{eq:devel1}
 \left\Vert \frac1N \sum_ {j=1}^N\int_{s'}^{t'}\partial_{\theta'}\cG^{\psi_{k-1}}_{t-u}(\theta,\varphi^{j,N}_u)\dd W^j_u\right\Vert_{-1,1/q}^{2m}\\
 =\, \frac{1}{N^{2m}} \sum_{l_1,\dots ,l_m=0}^\infty \sum_{j_1,j_1',\dots,j_m,j'_m=1}^N F^{j_1}_{l_1}(t,s',t')F^{j'_1}_{l_1}(t,s',t')\cdots F^{j_m}_{l_m}(t,s',t')F^{j'_m}_{l_m}(t,s',t')\, ,
\end{multline}
in which we have introduced 
  the random variables
\begin{equation}
 F^j_l(t,s',t')\, =\, \int_{s'}^{t'} e^{-\gl_l(t-u)}f'_{\psi_{k-1},l}(\gp^{j,N}_u)\dd W^j_u\, .
\end{equation}
We now take the expectation of both terms in 
\eqref{eq:devel1} and  all the terms in the sum that do not include an even number of each Brownian motion vanish. The number of non-zero terms in the expectation can thus be bounded by $(2m)!N^m$. Applying the It\^o formula to each of these non-zero terms, we get at most $(2m)!/(2^mm!)$ terms (the number of possibilities classifying $2m$ elements in couples) of the type $I_1\cdots I_m$, where 
\begin{equation}
 I_k\, =I_k(l_1,l_2)\, =\, \int_{s'}^{t'} e^{-(\gl_{l_1}+\gl_{l_2})(t-u)}\bbE \left[f'_{\psi_{k-1},l_1}(\gp^{j,N}_u)f'_{\psi_{k-1},l_2}(\gp^{j,N}_u) \right]\dd u\, .
\end{equation}
We now observe that 
\begin{equation}
\vert I_k(l_1,l_2)\vert \, \le \, \sqrt{I_k(l_1,l_1)I_k(l_2,l_2)}\, , 
\end{equation}
and for each index index $l_i$ in the first sum in the right-hand side of \eqref{eq:devel1} gives rise either
directly to a term $I_k (l_i)$ (for this it is needed that the two terms share the Brownian motion),
or in the arising products the terms  $I_k(l_i,l_{i'})$ are associated with a term of the type $I_k(l_i,l_{i''})$.
Therefore the expression obtained after applying It\^o formula can be bounded by a sum of terms 
of the type $\hat I_1\cdots \hat I_m$, with
\begin{equation}
 \hat I_k\, =\, I_k(\,l,l)\, =\,  \int_{s'}^{t'} e^{-2\gl_l(t-u)}\bbE \left[(f'_{\psi_{k-1},l}(\gp^{j,N}_u))^2\right] \dd u\, .
\end{equation}
Therefore we are facing the same estimates that we have encountered in the
quadratic case, see \eqref{eq:int-step4.1-0} and \eqref{eq:int-step4.1}, except of course for 
combinatorial contribution.
 In the end we obtain that there exists $C=C_m$ such that
\begin{equation}
\label{bound:first term noise non perp}
\bbE \left[
 \left\Vert \frac1N \sum_ {j=1}^N\int_{s'}^{t'}\partial_{\theta'}\cG^{\psi_{k-1}}_{t-u}(\cdot,\varphi^{j,N}_u)\dd W^j_u\right\Vert_{-1,1/q}^{2m}\right] \, \leq\,  C\frac{h^m_1(t'-s')}{N^m}\, ,
\end{equation}
\begin{equation}
\label{bound:second term noise perp}
\bbE\left[
 \left\Vert \frac1N \sum_ {j=1}^N\int_{s'}^{t'}\partial_{\theta'}\cG^{\psi_{k-1},\perp}_{t-u}(\cdot,\varphi^{j,N}_u)\dd W^j_u\right\Vert_{-1,1/q}^{2m}\right] \, \leq\,  C\frac{h_2^m(t'-s')}{N^m}\, .
\end{equation}
In a similar way
\begin{multline}
\left\Vert \frac1N \sum_ {j=1}^N\int_{T_{k-1}}^{s'}\left(\partial_{\theta'}\cG^{\psi_{k-1}}_{t-u}(\cdot,\varphi^{j,N}_u)-\partial_{\theta'}\cG^{\psi_{k-1}}_{s-u}(\cdot,\varphi^{j,N}_u)\right)\dd W^j_u\right\Vert_{-1,1/q}^{2m}\\
=\, \frac{1}{N^{2m}} \sum_{l_1,\dots ,l_m=0}^\infty \sum_{j_1,j_1',\dots,j_m,j'_m=1}^N G^{j_1}_{l_1}(s,t,s')G^{j'_1}_{l_1}(s,t,s')\cdots G^{j_m}_{l_m}(s,t,s')G^{j'_m}_{l_m}(s,t,s')\,
\end{multline}
with
\begin{equation}
 G^{j}_{l}(s,t,s')\, =\, \int_{T_{k-1}}^{s'} \left(e^{-\gl_l(t-u)}-e^{-\gl_l(s-u)}\right)f'_{\psi_{k-1},l}(\gp^{j,N}_u)\dd W^j_u\, .
\end{equation}
We reduce the problem as above to the study of products of integral terms $J_1\cdots J_k$ with
\begin{equation}
 J_k\, =\, \int_{T_{k-1}}^{s'} \left(e^{-\gl_{l_1}(t-u)}-e^{-\gl_{l_1}(s-u)}\right)\left(e^{-\gl_{l_2}(t-u)}-e^{-\gl_{l_2}(s-u)}\right)\bbE \left[
 f'_{\psi_{k-1},l_1}(\gp^{j,N}_u)f'_{\psi_{k-1},l_2}(\gp^{j,N}_u)\right] \dd u\, ,
\end{equation}
and then, like before, in terms of products of {\sl diagonal} terms of the type
\begin{equation}
 \hat J_k\, =\, \int_{T_{k-1}}^{s'} \left(e^{-\gl_l(t-u)}-e^{-\gl_l(s-u)}\right)^2\bbE\left[(f'_{\psi_{k-1},l}(\gp^{j,N}_u))^2\right]
  \dd u \, .
\end{equation}
Again we are reduced to the estimating terms that have already appeared in the quadratic case, see \eqref{eq:expr5}, so we obtain that there exists $C=C_m$ such that
\begin{multline}
\label{bound:second term noise}
 \bbE\left[ \left\Vert \frac1N \sum_ {j=1}^N\int_{T_{k-1}}^{s'}\left(\partial_{\theta'}\cG^{\psi_{k-1}}_{t-u}(\cdot,\varphi^{j,N}_u)-\partial_{\theta'}\cG^{\psi_{k-1}}_{s-u}(\cdot ,\varphi^{j,N}_u)\right)\dd W^j_u\right\Vert_{-1,1/q}^{2m}\right]\\
\leq \, C\frac{h_2^m(t-s)}{N^m}\, .
\end{multline}

\medskip

We now let $t' \nearrow t$ and $s'\nearrow s$ and by  applying Fatou's Lemma and Lemma~\ref{th:add2}, from\eqref{eq:decompose}, \eqref{bound:first term noise non perp}, \eqref{bound:second term noise perp} and \eqref{bound:second term noise} we get
\begin{equation}
\label{eq:estcp1}
 \bbE\left[ \left\Vert Z^k_t-Z^k_s \right\Vert_{-1}^{2m} \right]\, \leq \, C\frac{h_1^m(t-s)}{N^m}\, ,
\end{equation}
and 
\begin{equation}
\label{eq:estcp2}
 \bbE\left[ \left\Vert Z_t^{k,\perp}-Z_s^{k,\perp} \right\Vert_{-1}^{2m} \right]\, \leq \, C\frac{h_2^m(t-s)}{N^m}\, .
\end{equation}
The fact that  we are allowed to drop the weight in the $H_{-1}$ norm is of course due to the norm equivalence.
\medskip

We are now in good condition to apply the Garsia-Rodemich-Rumsey Lemma \cite{cf:SV}:
\medskip

\begin{lemma}
\label{th:GRR}
Let $p$ and $\Psi$ be continuous, strictly increasing functions on $(0,\infty)$ such that $p(0)=\Psi(0)=0$ and $\lim_{t\nearrow\infty} \Psi(t)=\infty$.
Given $T>0$ and $\phi$ continuous on $(0,T)$ and taking its values in a Banach space $(E,\Vert.\Vert)$, if
\begin{equation}
 \int_0^T\int_0^T \Psi\left(\frac{\Vert \phi(t)-\phi(s)\Vert}{p(|t-s|)}\right)\dd s\dd t \, \leq \, B\, < \, \infty\, ,
\end{equation}
then for $0\leq s\leq t\leq T$:
\begin{equation}
 \Vert\phi(t)-\phi(s)\Vert\, \leq \, 8\int_0^{t-s} \Psi^{-1}\left(\frac{4B}{u^2}\right)p(\dd u)\, .
\end{equation}
\end{lemma}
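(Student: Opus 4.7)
The proof is a dyadic chaining argument built on the auxiliary function $I(t) := \int_0^T \Psi\bigl(\Vert \phi(t) - \phi(u) \Vert / p(|t-u|)\bigr) \dd u$, which by Fubini satisfies $\int_0^T I(t) \dd t \le B$. The backbone is an averaging observation: for any subinterval $J \subset (0,T)$, Markov's inequality gives that the set $\{u \in J : I(u) \le 4B/|J|\}$ has measure at least $3|J|/4$, and, for any fixed $u_0 \in (0,T)$, the set $\{u \in J : \Psi(\Vert \phi(u_0)-\phi(u)\Vert / p(|u_0-u|)) \le 4 I(u_0)/|J|\}$ has measure at least $3|J|/4$ as well; consequently their intersection has measure at least $|J|/2 > 0$ and a common $u$ realizing both bounds exists.

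Fix $0 \le s < t \le T$ and set $t_0 := t$. I construct inductively a sequence $t_n \searrow s$ as follows: let $d_n := (t_n - s)/2$ and apply the averaging observation on $J_n := (s, s + d_n)$ to select $t_{n+1} \in J_n$ satisfying simultaneously $I(t_{n+1}) \le 4B/d_n$ and $\Psi\bigl(\Vert \phi(t_n) - \phi(t_{n+1})\Vert / p(t_n - t_{n+1})\bigr) \le 4 I(t_n)/d_n$. Because $t_{n+1} - s < d_n$ the gaps halve at each step, so $t_n \to s$ and the continuity of $\phi$ gives $\phi(t_n) \to \phi(s)$. Chaining the two bounds at consecutive steps produces, for $n \ge 1$,
\begin{equation*}
 \Vert \phi(t_n) - \phi(t_{n+1})\Vert \le p(t_n - t_{n+1}) \, \Psi^{-1}\!\bigl(16 B/(d_n d_{n-1})\bigr),
\end{equation*}
while the initial term $n=0$ is handled by choosing $t_0$ slightly below $t$ with $I(t_0)<\infty$ (valid for a.e.\ such point) and invoking continuity at the very end.

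Telescoping gives $\Vert \phi(t) - \phi(s)\Vert \le \sum_{n\ge 0}\Vert \phi(t_n) - \phi(t_{n+1})\Vert$. Setting $u_n := t_n - s$ one has $u_{n+1} < u_n/2$, so the intervals $(u_{n+1}, u_n)$ partition $(0, t-s)$; on each such interval the monotonicity of $\Psi^{-1}$ and of $p$ lets one bound the $n$-th term by a constant multiple of $\int_{u_{n+1}}^{u_n} \Psi^{-1}(4B/u^2)\, p(\dd u)$, and summation reproduces $\int_0^{t-s} \Psi^{-1}(4B/u^2)\, p(\dd u)$ up to a universal constant. The principal obstacle is the bookkeeping of constants needed to arrive at exactly the factor $8$ in the stated bound: the generic version of the argument immediately yields an inequality of the required shape with \emph{some} universal constant, but the sharp $8$ requires the specific dyadic schedule $d_n = (t_n-s)/2$ together with the $3/4$-measure threshold in the Chebyshev step, so that $d_n d_{n-1} \ge 2 d_n^2$ translates the $16 B$ coming from the chained estimate into the target $4B/u^2$. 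A secondary routine point is verifying the strict positivity of the overlap that guarantees $t_{n+1}$ exists, and justifying the passage to the limit in the telescoping sum via continuity of $\phi$.
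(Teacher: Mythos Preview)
The paper does not prove this lemma: it is stated as a quotation of the Garsia--Rodemich--Rumsey inequality with a reference to Stroock--Varadhan \cite{cf:SV}, and is then used as a black box in the proof of Lemma~\ref{lem:bound Zk}. Your proposal reproduces the classical chaining argument for GRR (Chebyshev selection of good points on dyadically shrinking intervals, telescoping, and monotonicity to pass from the discrete sum to $\int_0^{t-s}\Psi^{-1}(4B/u^2)\,p(\dd u)$), which is exactly the proof one finds in the standard references; the outline is correct, including the a.e.\ finiteness of $I(\cdot)$ to initialize and the continuity of $\phi$ to close the telescoping. The only point to be careful with, as you note, is the constant bookkeeping: the schedule $d_n=(t_n-s)/2$ together with the $3/4$-measure threshold does produce the stated $8$ and $4B$, but since the paper only \emph{uses} the lemma (with the specific choices $\Psi(u)=u^{2m}$, $p(u)=u^{(2+\zeta)/2m}$) and any universal constant would suffice for that application, this sharpness is not essential here.
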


\medskip 

We apply Lemma~\ref{th:GRR} with 
\begin{equation}
\phi(t)\, =\, Z^k_{t-T_{k-1}}\, , \ \ \ p(u)\, =\, u^{\frac{2+\gz}{2m}}\ \text{  and } \ \Psi(u)\, =\, u^{2m}\, ,
\end{equation}
and $\zeta=1/100$ (Remark~\ref{rem:choice T zeta}).
With these choices we can find an explicit constant $C=C(m, \zeta)$ such that
\begin{equation}
\Vert Z^k_t-Z^k_s \Vert_{-1}^{2m}\, \le \, C ( t-s ) ^\zeta B \, ,
\end{equation}
for every $s$ and $t$ such that $T_{k-1}\leq s< t\leq T_k$ and $B$ is a positive random variable
such that
\begin{equation}
\bbE [B]\, \le \, \frac C{N^m} \int_0^T \int_0^T \frac{h_1^m(\vert t-s \vert)}{\vert t-s\vert^{2+\zeta}} \dd s \dd t\, ,
\end{equation}
where $C$ is the constant in \eqref{eq:estcp1}.
For $m>4$ the function $t \mapsto h_1^m(t) / t^{2+\zeta}$, defined for $t>0$,  is increasing (and  it tends to zero for $t \searrow 0$). So $\bbE [B]$ is bounded by $CN^{-m} h_1^m(T)/T^\zeta$ and therefore
\begin{equation}
 \bbE\left[\sup_{T_{k-1}\leq s< t\leq T_k}\frac{\Vert Z^k_t-Z^k_s \Vert_{-1}^{2m}}{|t-s|^\gz}\right]\, \leq \, C \frac{T^{m-\gz}}{N^m}\, ,
\end{equation}
which leads to
\begin{equation}
 \bbP\left[ \sup_{T_{k-1}\leq t\leq T_k} \Vert Z^k_t \Vert_{-1} \geq \sqrt{\frac{T}{N}}N^\gz\right]\, \leq \, C\frac{1}{N^{m\gz}}\, .
\end{equation}
Then, (recall $n=n_N=\frac{N}{T}$) we deduce
\begin{equation}
  \bbP\left[ \sup_{1\leq k\leq n} \sup_{T_{k-1}\leq t\leq T_k} \Vert Z^k_t \Vert_{-1} \geq \sqrt{\frac{T}{N}}N^\gz\right]\, \leq \, C\frac{1}{TN^{m\gz-1}}\, ,
\end{equation}
where the right hand side tends to $0$ when $m$ is chosen sufficiently large.
A similar argument gives for $Z_t^{k,\perp}$
\begin{equation}
  \bbP\left[\sup_{1\leq k\leq n} \sup_{T_{k-1}\leq t\leq T_k} \Vert Z_t^{k,\perp} \Vert_{-1} \geq \frac{1}{\sqrt{N}}N^\gz\right]\, \leq \, C\frac{T^{\gz-1}}{N^{m\gz-1}}\, .
\end{equation}

\end{proof}

\medskip

We now give the main result of the section:
\medskip

\begin{proposition}
\label{prop:closeness traj manifold}
If $\Vert \nu^1_0 \Vert_{-1} \leq \frac{N^{2\gz}}{\sqrt{N}}$ and if the event $B^N$ defined in \eqref{eq:BupN} is realized (then, with probability approaching $1$
as $N \to \infty$) we have
 \begin{equation}
\label{ineq:bound sup nuk}
   \sup_{1\leq k\leq n}\sup_{ t \in [T_{k-1},T_{k}]} \left\Vert \nu^k_t \right\Vert_{-1}\leq \sqrt{\frac{T}{N}}N^{2\gz}\, ,
 \end{equation}
 and 
\begin{equation}
\label{ineq:bound sup nuk aux}
  \max_{1\leq k\leq n} \left\Vert \nu^{k}_{T_{k-1}} \right\Vert_{-1}\leq \frac{N^{2\gz}}{\sqrt{N}}\, .
\end{equation}
\end{proposition}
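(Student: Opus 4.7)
The proof proceeds by induction on $k\in\{1,\ldots,n\}$, with inductive hypothesis that $\|\nu^j_{T_{j-1}}\|_{-1}\le N^{2\gz}/\sqrt N$ for every $j\le k$ and that \eqref{ineq:bound sup nuk} holds for every $j<k$. The base case $k=1$ is exactly the standing assumption. The inductive step splits naturally into two parts: first bound $\sup_{t\in[T_{k-1},T_k]}\|\nu^k_t\|_{-1}$ by $\sqrt{T/N}\,N^{2\gz}$, then extract the stronger endpoint bound $\|\nu^{k+1}_{T_k}\|_{-1}\le N^{2\gz}/\sqrt N$ which feeds the next iteration.

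For the interval estimate the key input is that $\psi_{k-1}=\proj(\mu_{T_{k-1}})$, so by Lemma~\ref{lem:def proj on M} the residual $\nu^k_{T_{k-1}}$ is orthogonal to $q'_{\psi_{k-1}}$ in $H_{-1,1/q_{\psi_{k-1}}}$. Consequently $\nu^k_{T_{k-1}}$ lies in the spectral complement of $\ker L_{\psi_{k-1}}$ and
\begin{equation}
\left\|e^{-(t-T_{k-1})L_{\psi_{k-1}}}\nu^k_{T_{k-1}}\right\|_{-1}\le C\,e^{-\gl_1(t-T_{k-1})}\|\nu^k_{T_{k-1}}\|_{-1}.
\end{equation}
Feeding this contraction together with the noise bound $\|Z^k_t\|_{-1}\le\sqrt{T/N}\,N^{\gz}$ valid on $B^N$ and the quadratic estimate $\|\partial_\theta[\nu^k_s J*\nu^k_s]\|_{-2}\le C\|\nu^k_s\|_{-1}^2$ (which uses that $J*\nu$ is smooth and $\|J*\nu\|_\infty\le 2K|\hat\nu_1|\le C\|\nu\|_{-1}$) into the mild formulation \eqref{eq:ito nu^k} and running a Gronwall-type bootstrap, one checks that $\|\nu^k_t\|_{-1}$ stays below the threshold $\gs_N=\lceil N^{2\gz}\sqrt{T/N}\rceil$. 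This simultaneously proves \eqref{ineq:bound sup nuk} for step $k$ and forces $\tau^k_{\gs_N}\ge T_k$, so the bound is genuine and not an artifact of the stopping time.

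To obtain \eqref{ineq:bound sup nuk aux} at $T_k$ I decompose $\nu^k_{T_k}$ into its tangential part along $q'_{\psi_{k-1}}$ and its orthogonal complement. Because $T\sim N^{1/10}$ the linear contribution $e^{-\gl_1 T}\|\nu^k_{T_{k-1}}\|_{-1}$ is super-polynomially small, while on $B^N$ the orthogonal noise satisfies the sharper bound $\|Z^{k,\perp}_{T_k}\|_{-1}\le N^{\gz}/\sqrt N$; hence the orthogonal component of $\nu^k_{T_k}$ is $O(N^{\gz}/\sqrt N)$. The tangential component can be as large as $\sqrt{T/N}\,N^{\gz}$, but replacing $\psi_{k-1}$ by $\psi_k=\proj(\mu_{T_k})$ precisely removes it: expanding $\proj$ to second order via Lemma~\ref{lem:def proj on M} identifies $\nu^{k+1}_{T_k}$ with the orthogonal part of $\nu^k_{T_k}$ modulo a remainder of order $\|\nu^k_{T_k}\|_{-1}^2=O(TN^{4\gz}/N)$. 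With $T\sim N^{1/10}$ and $\gz\le 1/100$ this remainder is dominated by $N^{2\gz}/\sqrt N$, closing the induction.

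The main obstacle is ensuring that no constant degrades over the $n_N=N/T\to\infty$ iterations. The Gronwall factor over a single interval of length $T\sim N^{1/10}$ takes the schematic form $\exp(CT\sup_s\|\nu^k_s\|_{-1})$, so the smallness $\sup_s\|\nu^k_s\|_{-1}\le\gs_N=o(1/T)$ is what makes the scheme consistent and dictates the choice $\gz\le 1/100$ from Remark~\ref{rem:choice T zeta}. The dichotomy between the two scales $\sqrt{T/N}\,N^{2\gz}$ and $N^{2\gz}/\sqrt N$ reflects the different behaviors in the two spectral directions: the $\sqrt T$ factor records the tangential Brownian-type spread accumulated during one interval, which the re-projection then discards, leaving only the exponentially contracted orthogonal fluctuation to be carried into the next step.
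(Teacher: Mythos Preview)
Your proposal is correct and follows essentially the same approach as the paper: an induction on $k$ that combines the exponential contraction of the linear part (available because $\nu^k_{T_{k-1}}\perp q'_{\psi_{k-1}}$), the noise bounds from $B^N$, and a bootstrap/continuation argument to get the interval estimate, followed by the re-projection $\psi_{k-1}\mapsto\psi_k$ to recover the sharper endpoint bound. The paper carries out the endpoint step via the explicit decomposition $\nu^{k+1}_{T_k}=(P^\perp_{\psi_k}-P^\perp_{\psi_{k-1}})[\,\cdot\,]+P^\perp_{\psi_{k-1}}[q_{\psi_{k-1}}-q_{\psi_k}]+P^\perp_{\psi_{k-1}}\nu^k_{T_k}$, which is exactly the ``orthogonal part plus $O(\Vert\nu^k_{T_k}\Vert_{-1}^2)$ remainder'' that you describe; your ``Gronwall-type bootstrap'' is the paper's $t^*$ continuation argument in slightly different language.
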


\medskip

\begin{proof}
In view of Lemma~\ref{lem:bound Zk} we can and will assume that $B^N$ is verified.
From \eqref{eq:ito nu^k} and Lemma~\ref{th:add1}, we get,for all $k=1\dots n$ and $t\in [T_{k-1},T_k]$
\begin{equation}
\Vert \nu^k_t \Vert_{-1}\, \leq\, Ce^{-\gl_1 (t-T_{k-1})} \Vert \nu^k_{T_{k-1}}\Vert_{-1} + C \int_{T_{k-1}}^t \left( 1+\frac{1}{\sqrt{t-s}}\right) \Vert \nu^k_s \Vert_{-1}^2 \dd s
+ \Vert Z^k_t \Vert_{-1}\, .
\end{equation}
The constant $C$ in front of the first term of the right hand side above would be equal to $1$ if we were using the $\Vert .\Vert_{-1,1/q_{\psi_{k-1}}}$ norm.
Let us assume that $\Vert \nu^k_{T_{k-1}} \Vert_{-1}\le N^{2\zeta}/\sqrt{N}$, using 
Lemma \ref{lem:bound Zk} we obtain
\begin{equation}
\label{ineq:bound nuk}
\Vert \nu^k_t\Vert_{-1} \, \leq \, Ce^{-\gl_1 (t-T_{k-1})} \frac{N^{2\zeta} }{\sqrt{N}}+ C(T+\sqrt{T})\sup_{T_{k-1}\leq s\leq t}\Vert \nu^k_s \Vert_{-1}^2 + 
\frac{\sqrt{T}}{\sqrt{N}}N^{\zeta}\, .
\end{equation}
Therefore we readily see that if we  define
\begin{equation}
t^*\, =\, \sup\left\{ t\in [T_{k-1},T_k]: \, \Vert \nu^k_t \Vert_{-1} > \frac{\sqrt{T}}{\sqrt{N}}N^{2\zeta}\right\}\, ,
\end{equation}
we have that for $t\le t^*$
\begin{equation}
\label{ineq:bound nuk-2}
\Vert \nu^k_t\Vert_{-1} \, \le \, CN^{2\gz - \frac 12}+2CT^2 N^{4\zeta -1}+ \sqrt{T} N^{\zeta -\frac 12}\, .
\end{equation}
Therefore since $\lim_N T^3 N^{-1+4\zeta}=0$ (see Remark \ref{rem:choice T zeta}),
for $N$ large enough, we have  $t^*=T_k$ and \eqref{ineq:bound sup nuk}
is reduced to proving n $\Vert \nu^k_{T_{k-1}} \Vert_{-1}\le N^{2\zeta}/\sqrt{N}$ for $k=1,2, \ldots, n$.
This holds for $k=1$: we are now going to show 
by induction \eqref{ineq:bound sup nuk aux} and therefore that the assumption propagates from
$k$ to $k+1$.

To prove the bound on $\nu^{k+1}_{T_k}$, assuming 
the bound on $\nu^{k}_{T_{k-1}}$, we use the smoothness of the manifold $M$. Since we are working in $B^N$, $\tau^k_\gs=T_k$ and  we have
\begin{equation}
\label{eq:decomp nu0}
\begin{split}
 \nu^{k+1}_{T_{k}}\, &=\, q_{\psi_{k-1}}+\nu^k_{T_k}-q_{\psi_k} \\
               &=\, P^\perp_{\psi_k}\left[q_{\psi_{k-1}}+\nu^k_{T_k}-q_{\psi_k}\right] \\
               &=\, \left(P^\perp_{\psi_k}-P^\perp_{\psi_{k-1}}\right)\left[ q_{\psi_{k-1}}+\nu^k_{T_k}-q_{\psi_k}\right] +P^\perp_{\psi_{k-1}}\left[q_{\psi_{k-1}}-q_{\psi_k}\right]
               +P^\perp_{\psi_{k-1}}\nu^k_{T_k}\, ,
\end{split}
\end{equation}
Since the mapping $\psi\mapsto P^\perp_\psi$ is smooth on the compact $M$, we have (cf. \S~\ref{sec:Manif})
\begin{equation}
 \left\Vert P^\perp_{\psi_k}-P^\perp_{\psi_{k-1}} \right\Vert_{\cL(H_{-1},H_{-1})}\, \leq \, C\left\vert \psi_k-\psi_{k-1} \right\vert \, ,
\end{equation}
and the identities
\begin{equation}
\label{eq:link psi nu}
 \psi_k-\psi_{k-1}\, =\, \proj (\mu_{T_k})-\proj(\mu_{T_{k-1}})\, ,
\end{equation}
and
\begin{equation}
\label{eq:link mu nu tk}
 \mu_{T_k}-\mu_{T_{k-1}}\, =\, \nu^k_{T_k}-\nu^k_{T_{k-1}}\, ,
\end{equation}
combined with the smoothness of $\proj$,  lead to (using \eqref{ineq:bound sup nuk})
\begin{equation}
  \left\Vert P^\perp_{\psi_k}-P^\perp_{\psi_{k-1}} \right\Vert_{\cL(H_{-1},H_{-1})}\, \leq \, C\frac{\sqrt{T}}{\sqrt{N}}N^{2\zeta}\, .
\end{equation}
On the other hand, the smoothness of $q_\psi$ with respect to $\psi$, \eqref{eq:link psi nu} and \eqref{eq:link mu nu tk} imply
\begin{equation}
 \left\Vert q_{\psi_{k-1}}+\nu^k_{T_k}-q_{\psi_k} \right\Vert_{-1}\, \leq\, C \left(
 \Vert \nu^k_{T_{k-1}}\Vert_{-1} +\Vert \nu^k_{T_k}\Vert_{-1}\right)\, ,
\end{equation}
so the first term in the last line of \eqref{eq:decomp nu0} is of order $\frac{T}{N}N^{4\zeta}$,
which is much smaller than $ \frac{N^{2\gz}}{\sqrt{N}}$ nor $N \to \infty$,
since $\lim_N T N^{2\zeta -\frac 12}=0$ (see Remark \ref{rem:choice T zeta}).
Moreover, Lemma \ref{lem:def proj on M} implies
\begin{equation}
 \left\Vert P^\perp_{\psi_{k-1}}\left[q_{\psi_{k-1}}-q_{\psi_k}\right]\right\Vert_{-1}\, =\,
\left\Vert  P^\perp_{\psi_{k-1}}  \left[ v(\mu_{T_{k-1}})-v(\mu_{T_k}) \right] \right\Vert_{-1}\, \leq\, 
C\left\Vert \mu_{T_{k-1}}-\mu_{T_k}\right\Vert_{-1}\, ,
\end{equation}
so the second term in the last line of \eqref{eq:decomp nu0} is also of order $\frac{T}{N}N^{4\zeta}$. 
Finally, projecting \eqref{eq:ito nu^k} on
$\text{Range}\left(L_{q_{\psi_{k-1}}}\right)$ and by using again Lemma~\ref{th:add1}, we get
\begin{equation}
 \left\Vert P^\perp_{{\psi_{k-1}}}\nu^k_t\right\Vert_{-1} \,  \leq \,C e^{-\gl_1 (t-T_{k-1})} \Vert \nu^k_{T_{k-1}}\Vert_{-1} + C \int_{T_{k-1}}^t \left( 1+\frac{1}{\sqrt{t-s}}\right) \Vert \nu^k_s \Vert_{-1}^2 \dd s
+ \Vert Z^{k,\perp}_t \Vert_{-1}\, ,
\end{equation}
which, since $\lim_N T^4 N^{1-5\gz}=0$ (see Remark \ref{rem:choice T zeta}), leads  for $N$ large enough to
\begin{equation}
 \left\Vert P^\perp_{{\psi_{k-1}}}\nu^k_{T_k}\right\Vert_{-1} \,  \leq \, \frac{N^{3\zeta/2}}{\sqrt{N}}\, .
\end{equation}
This takes care of the third term in the last line of \eqref{eq:decomp nu0}
and by collecting the three estimates we obtain \eqref{ineq:bound sup nuk aux} and the proof is complete.
\end{proof}

\section{The effective dynamics on the tangent space}
\label{sec:dyntan}

\begin{proposition}
\label{th:dyntan}
 We have the first order approximation in probability: for every $\gep>0$
\begin{equation}
\bbP\left(\left\vert
 \sum_{k=1}^n (\psi_k-\psi_{k-1})\, -\, \sum_{k=1}^n \frac{( Z^{k}_{T_k}, q'_{\psi_{k-1}})_{-1,1/q_{\psi_{k-1}}}}{( q',q')_{-1,1/q}}\right \vert \le \gep \right)\, =\, 1\, .
\end{equation}
\end{proposition}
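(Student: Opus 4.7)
My plan is to combine a second-order Taylor expansion of $\proj$ about $q_{\psi_{k-1}}$ with the Duhamel representation \eqref{eq:ito nu^k} of $\nu^k_{T_k}$, and to exploit the spectral structure of $L_{\psi_{k-1}}$ (namely $q'_{\psi_{k-1}} \in \Ker L_{\psi_{k-1}}$ together with self-adjointness of $L_{\psi_{k-1}}$ in $H_{-1, 1/q_{\psi_{k-1}}}$) to isolate $(Z^k_{T_k}, q'_{\psi_{k-1}})_{-1, 1/q_{\psi_{k-1}}}$ as the only surviving contribution at first order. Throughout, I would work on the full-probability event provided by Lemma~\ref{lem:bound Zk} and Proposition~\ref{prop:closeness traj manifold}, on which $\tau^k_{\gs_N} = T_k$ and $\sup_{s \in [T_{k-1}, T_k]} \Vert \nu^k_s \Vert_{-1} \leq \sqrt{T/N}\, N^{2\gz}$ uniformly in $k$; crucially, the defining equation for $\psi_{k-1}= \proj(\mu_{T_{k-1}})$ forces $(\nu^k_{T_{k-1}}, q'_{\psi_{k-1}})_{-1, 1/q_{\psi_{k-1}}} = 0$ at the beginning of each step.

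Applying the smooth projection $\proj$ via Lemma~\ref{lem:def proj on M} and Lemma~\ref{lem:second order projection}, I would write
\begin{equation*}
\psi_k - \psi_{k-1} \, = \, \frac{(\nu^k_{T_k}, q'_{\psi_{k-1}})_{-1, 1/q_{\psi_{k-1}}}}{(q',q')_{-1, 1/q}} \, + \, Q_{\psi_{k-1}}(\nu^k_{T_k}, \nu^k_{T_k}) \, + \, \cR_k,
\end{equation*}
with $Q_\psi$ a continuous bilinear form on $H_{-1}$ and $|\cR_k| \leq C \Vert \nu^k_{T_k}\Vert_{-1}^3$. Plugging \eqref{eq:ito nu^k} into the first term and using $e^{-tL_{\psi_{k-1}}} q'_{\psi_{k-1}} = q'_{\psi_{k-1}}$ together with self-adjointness, the tangent component of the semigroup term reduces to $(\nu^k_{T_{k-1}}, q'_{\psi_{k-1}})_{-1, 1/q_{\psi_{k-1}}} = 0$, so
\begin{equation*}
(\nu^k_{T_k}, q'_{\psi_{k-1}})_{-1, 1/q_{\psi_{k-1}}} \, = \, (Z^k_{T_k}, q'_{\psi_{k-1}})_{-1, 1/q_{\psi_{k-1}}} \, + \, \cN_k,
\end{equation*}
where $\cN_k = -\int_{T_{k-1}}^{T_k} ( \partial_\theta[\nu^k_s J \ast \nu^k_s], q'_{\psi_{k-1}})_{-1, 1/q_{\psi_{k-1}}}\dd s$. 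Summed over $k$, this reduces the Proposition to showing that $\sum_k \cR_k$, $\sum_k \cN_k$ and $\sum_k Q_{\psi_{k-1}}(\nu^k_{T_k}, \nu^k_{T_k})$ are each $o_\bbP(1)$.

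The cubic term $\sum_k \cR_k$ accumulates at most $n (T/N)^{3/2} N^{6\gz} = (T/N)^{1/2} N^{6\gz}$, which vanishes by the choice $T \sim N^{1/10}$, $\gz \le 1/100$. For $\cN_k$, I would integrate by parts in $\theta$ using Remark~\ref{rem:compute} to rewrite the integrand as a smooth quadratic expression in $\nu^k_s$ tested against a bounded function, then use the orthogonality of $\nu^k_s$ to $q'_{\psi_{k-1}}$ (up to exponentially decaying corrections controlled by the spectral gap $\gl_1$) to gain a decisive factor relative to the naive bound $T\,\Vert \nu^k_s\Vert_{-1}^2$. For the quadratic piece, I would decompose $\nu^k_{T_k} = Z^k_{T_k} + e^{-T L_{\psi_{k-1}}}\nu^k_{T_{k-1}} + \cI_k$, handle the mixed and purely deterministic bilinear terms by the exponential decay of $e^{-TL_{\psi_{k-1}}}$ in the perpendicular direction combined with the a priori bound, and for the genuinely dangerous diagonal piece $\sum_k Q_{\psi_{k-1}}(Z^k_{T_k}, Z^k_{T_k})$ split it into its conditional mean given $\cF_{T_{k-1}}$ plus a centered martingale; the centered martingale has bracket $O(n (T/N)^2) = O(T/N) = o(1)$, while the compensator is computed explicitly using the eigenfunction expansion of $\cG^{\psi_{k-1}}$.

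The main obstacle is showing that this last compensator vanishes (or is $o(1)$): naively $\bbE[Q_{\psi_{k-1}}(Z^k_{T_k}, Z^k_{T_k}) \mid \cF_{T_{k-1}}] = O(T/N)$, which after $n \sim N/T$ steps would give an $O(1)$ drift — precisely the "drift of the synchronization center" that \S~\ref{sec:heur} asserts is zero to leading order. The resolution is that Lemma~\ref{lem:second order projection} provides $Q_\psi$ in a specific form whose diagonal action on the noise is, through the rotational symmetry of the model encoded in $q_\psi$ and the eigenstructure of $L_\psi$, symmetric in a way that makes the explicit integrand (cf.\ \eqref{eq:heur2}) integrate to zero against the noise covariance. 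Carrying out this cancellation explicitly — and matching it with the analogous simplification inside $\cN_k$ — is the genuinely delicate part of the argument.
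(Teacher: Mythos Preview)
Your overall architecture matches the paper's, and you correctly anticipate that a Doob decomposition plus a rotational-symmetry cancellation is needed for a quadratic-in-$Z^k$ term. However, you have misplaced where this dangerous term lives, and your treatment of $\cN_k$ has a genuine gap.

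The claim that $\nu^k_s$ is orthogonal to $q'_{\psi_{k-1}}$ ``up to exponentially decaying corrections'' is false for $s>T_{k-1}$. Only $\nu^k_{T_{k-1}}$ is orthogonal (by definition of $\proj$); for later $s$ the tangent component of $\nu^k_s$ is driven by $Z^k_s$ and is of order $\sqrt{(s-T_{k-1})/N}\,N^\gz$, the same size as the perpendicular part by time $T_k$. No gain comes from orthogonality, and the naive bound $|\cN_k|\le CT\cdot(T/N)N^{4\gz}$ sums over $n\sim N/T$ steps to $CTN^{4\gz}$, which diverges with the choices in Remark~\ref{rem:choice T zeta}. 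The paper's remedy is to re-inject the Duhamel formula \eqref{eq:ito nu^k} for $\nu^k_s$ \emph{into} $\cN_k$ itself, producing nine cross terms $A_{k,1},\ldots,A_{k,9}$; eight are small because the factor $e^{-(s-T_{k-1})L_{\psi_{k-1}}}\nu^k_{T_{k-1}}$ genuinely \emph{does} decay exponentially (this is where $\nu^k_{T_{k-1}}\perp q'_{\psi_{k-1}}$ enters correctly) or because the nonlinear integral $I_k(s)$ is higher order. The sole dangerous piece is $A_{k,3}$, the term quadratic in $Z^k_s$, and \emph{that} is where the Doob decomposition and the parity-based cancellation of the compensator $\gamma_k$ belong.

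For $Q$, your martingale approach could in principle be pushed through, but the paper's route is much simpler: the explicit second-order term in Lemma~\ref{lem:second order projection} has $(\nu^k_{T_k},(\log q_{\psi_{k-1}})'')_{-1,1/q_{\psi_{k-1}}}$ as one factor, and since $(\log q)''\in R(L_q)$ this pairing sees only the perpendicular component $(\nu^k_{T_k})^\perp$, which by Proposition~\ref{prop:closeness traj manifold} is $O(N^{2\gz-1/2})$ rather than $O(\sqrt{T/N}\,N^{2\gz})$. The resulting bound sums to $O(N^{4\gz}/\sqrt{T})=o(1)$ with no stochastic argument needed.
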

\medskip

\begin{proof}
 Lemma \ref{lem:second order projection} and Proposition \ref{prop:closeness traj manifold} give
 (assuming that $B^N$ is realized: we will do this through all the proof)
\begin{multline}
 \psi_k-\psi_{k-1}\, =\, -\frac{( \nu^k_{T_k}, q'_{\psi_{k-1}})_{-1,1/q_{\psi_{k-1}}}}{( q', q')_{-1,1/q}}\\
 -\frac{1}{2\pi I_0^2(2Kr)}\frac{( \nu^k_{T_k}, (\log q_{\psi_{k-1}})'')_{-1,1/q_{\psi_{k-1}}}}{( q',q')_{-1,1/q}}\frac{( \nu^k_{T_k}, q'_{\psi_{k-1}})_{-1,1/q_{\psi_{k-1}}}}{( q', q')_{-1,1/q}}+o\left(\frac{T}{N}\right)\, .
\end{multline}
Since $\log(q_{\psi_{k-1}})''$ is in $R(L_{q_{\psi_{k-1}}})$, we have 
\begin{equation}
( \nu^k_{T_k},
 (\log q_{\psi_{k-1}})'')_{-1,1/q_{\psi_{k-1}}}=
 ( (\nu^k_{T_k})^\perp, (\log q_{\psi_{k-1}})'')_{-1,1/q_{\psi_{k-1}}},
 \end{equation}
  and thus using again 
 Proposition \ref{prop:closeness traj manifold} 
 we get for the second term of the right-hand side
\begin{equation}
\left\Vert \frac{1}{2\pi I_0^2(2Kr)}\frac{( \nu^k_{T_k}, (\log q_{\psi_{k-1}})'')_{-1,1/q_{\psi_{k-1}}}}{( q',q')_{-1,1/q}}\frac{( \nu^k_{T_k}, q'_{\psi_{k-1}})_{-1,1/q_{\psi_{k-1}}}}{( q', q')_{-1,1/q}} \right\Vert_{-1} \, \leq \, C\frac{1}{\sqrt{N}}N^{2\zeta} \frac{\sqrt{T}}{\sqrt{N}}N^{2\zeta}\, , 
\end{equation}
and hence it is  $o(T/N)$, since $\lim_N N^{4\gz}/\sqrt{T}=0$ (see Remark \ref{rem:choice T zeta}).
So only the component on the tangent space of $M$ at the point $\psi_{k-1}$ is of order 
$T/N$:
\begin{equation}
 \psi_k-\psi_{k-1}\, =\, -\frac{( \nu^k_{T_k}, q'_{\psi_{k-1}})_{-1,1/q_{\psi_{k-1}}}}{( q', q')_{-1,1/q}}+o\left(\frac{T}{N}\right)\, .
\end{equation}
We now decompose this tangent term.
 Our goal is to show that the projection of the noise $Z_{T_k}$ is the only term that gives a non negligible contribution when $N$ goes to infinity. However, a  direct domination of the remainder -- the nonlinear part of the evolution equation \eqref{eq:ito nu^k} -- 
 using the a priori bound $\Vert\nu^k_t\Vert_{-1}\leq\frac{\sqrt{T}}{\sqrt{N}}N^{2\zeta}$ is not sufficient. In fact 
\begin{equation}
 \left|\left( \int_{T_{k-1}}^{T_k} e^{-(T_k-s)L_{q_{\psi_{k-1}}}}\partial_\theta [\nu^k_sJ*\nu^k_s]\dd s, q'_{\psi_{k-1}}\right)_{-1,1/q_{\psi_{k-1}}}\right|\, \leq\, \frac{T^2}{N}N^{4\zeta}\, .
\end{equation}
In order to improve this estimate the strategy is to
 we re-inject \eqref{eq:ito nu^k} into the projection $( I_k(T_k), q_{\psi_{k-1}})_{-1,1/q_{\psi_{k-1}}}$, where
\begin{equation}
 I_k(t)\, =\, \ind_{\{\tau^k_{\gs_N}\geq T_{k-1}\}}\int_{T_{k-1}}^{t\wedge \tau^k_{\gs_N} } e^{-(T_k-s)L_{q_{\psi_{k-1}}}}\partial_\theta [\nu^k_sJ*\nu^k_s]\dd s\, ,
\end{equation}
and this leads to a rather long expression
\begin{equation}
\label{eq:long}
 \sum_{k=1}^n (\psi_k-\psi_{k-1})\, =\, \sum_{k=1}^n \frac{( Z^{k}_{T_k}, q'_{\psi_{k-1}})_{-1,1/q_{\psi_{k-1}}}}{( q',q')_{-1,1/q}}+\sum_{k=1}^n\sum_{i=1}^9A_{k,i}+o(1)\, ,
\end{equation}
with
\begin{equation}
\label{eq:def Aki}
\begin{split}
& A_{k,1}\, =\, \ind_E \times\\
 &\left(\int_\star e^{-(T_k-s)L_{q_{\psi_{k-1}}}}\partial_\theta \left[e^{-(s-T_{k-1})L_{q_{\psi_{k-1}}}}\nu^k_{T_{k-1}}J*\left(e^{-(s-T_{k-1})L_{q_{\psi_{k-1}}}}\nu^k_{T_{k-1}}\right)\right]\dd s, q'_{\psi_{k-1}}\right)_\star\\
& A_{k,2}\, =\,  \ind_E\left(\int_\star e^{-(T_k-s)L_{q_{\psi_{k-1}}}}\partial_\theta \left[I_k(s)J*I_k(s)\right]\dd s, q'_{\psi_{k-1}}\right)_\star\\
& A_{k,3}\, =\,  \ind_E\left(\int_\star e^{-(T_k-s)L_{q_{\psi_{k-1}}}}\partial_\theta \left[Z^k_sJ*Z^k_s\right]\dd s, q'_{\psi_{k-1}}\right)_\star\\
 & A_{k,4}\, =\, \ind_E\left(\int_\star e^{-(T_k-s)L_{q_{\psi_{k-1}}}}\partial_\theta \left[e^{-(s-T_{k-1})L_{q_{\psi_{k-1}}}}\nu^k_{T_{k-1}}J*I_k(s)\right]\dd s, q'_{\psi_{k-1}}\right)_\star\\
& A_{k,5}\, =\, \ind_E\left(\int_\star e^{-(T_k-s)L_{q_{\psi_{k-1}}}}\partial_\theta \left[I_k(s)J*\left(e^{-(s-T_{k-1})L_{q_{\psi_{k-1}}}}\nu^k_{T_{k-1}}\right)\right]\dd s, q'_{\psi_{k-1}}\right)_\star\\
& A_{k,6}\, =\,  \ind_E\left(\int_\star e^{-(T_k-s)L_{q_{\psi_{k-1}}}}\partial_\theta \left[e^{-(s-T_{k-1})L_{q_{\psi_{k-1}}}}\nu^k_{T_{k-1}}J*Z^k_s\right]\dd s, q'_{\psi_{k-1}}\right)_\star\\
& A_{k,7}\, =\,  \ind_E\left(\int_\star e^{-(T_k-s)L_{q_{\psi_{k-1}}}}\partial_\theta \left[Z^k_sJ*\left(e^{-(s-T_{k-1})L_{q_{\psi_{k-1}}}}\nu^k_{T_{k-1}}\right)\right]\dd s, q'_{\psi_{k-1}}\right)_\star\\
& A_{k,8}\, =\,  \ind_E\left(\int_\star e^{-(T_k-s)L_{q_{\psi_{k-1}}}}\partial_\theta \left[I_k(s)J*Z^k_s\right]\dd s, q'_{\psi_{k-1}}\right)_\star\\
& A_{k,9}\, =\,  \ind_E\left(\int_\star e^{-(T_k-s)L_{q_{\psi_{k-1}}}}\partial_\theta \left[Z^k_sJ*I_k(s)\right]\dd s, q'_{\psi_{k-1}}\right)_\star\, ,
\end{split}
\end{equation}
where  we have used the shortcuts $E=\{\tau^k_{\gs_N}\geq T_{\psi_{k-1}}\}$,
$\int_\star$ stands for $\int_{T_{k-1}}^{T\wedge \tau^k_{\gs_N}}$ and 
and $(\cdot, \cdot)_\star$ is
$(\cdot, \cdot)_{-1,1/q_{\psi_{k-1}}}$.

The following bound (a direct consequence of Lemma \ref{th:add1} and \ref{lem:H-1 H-2}) is now going to be of help:
\begin{multline}
\label{eq:bound-add}
 \left\Vert \int_{T_{k-1}}^{T_k} e^{-(T_k-s)L_{q_{\psi_{k-1}}}}\partial_\theta [h_1(s)J*h_2(s)]\dd s\right\Vert_{-1}\\ \leq\, 
C\int_{T_{k-1}}^{T_k} \left(1+\frac{1}{\sqrt{T_k-s}}\right)\Vert h_1(s)\Vert_{-1} \Vert h_2(s)\Vert_{-1}\dd s\, .
\end{multline}
In fact it is not difficult to see that  by using Lemma \ref{lem:bound Zk}, Proposition \ref{prop:closeness traj manifold} and \eqref{eq:bound-add}
we can efficiently bound  all the $A_{k,j}$'s, except $A_{k,3}$: 
\begin{multline}
| A_{k,1} |\, \leq \, \frac{1}{N}N^{5\zeta}, \ \ 
| A_{k,2} |\, \leq \, \frac{T^5}{N^2}N^{9\zeta}, \ \
| A_{k,4} |\, \leq \, \frac{T^2}{N^{3/2}}N^{7\zeta}, \ \
| A_{k,5} |\, \leq \, \frac{T^2}{N^{3/2}}N^{7\zeta}, \\
| A_{k,6} |\, \leq \, \frac{T^{1/2}}{N^{3/2}}N^{4\zeta}, \ \
| A_{k,7} |\, \leq \, \frac{T^{1/2}}{N^{3/2}}N^{4\zeta}, \ \
| A_{k,8} |\, \leq \, \frac{T^{7/2}}{N^{3/2}}N^{6\zeta} \ \ and \ \ 
| A_{k,9} |\, \leq \, \frac{T^{7/2}}{N^{3/2}}N^{6\zeta} \, .
\end{multline}
Since $T^4N^{9\zeta-1}\rightarrow 0$ and $N^{5\zeta}/T\rightarrow 0$ (see Remark \ref{rem:choice T zeta}), we get (recall that $n=n_N=\frac{N}{T}$)
\begin{equation}
\label{eq:lesslong}
 \sum_{k=1}^n (\psi_k-\psi_{k-1})\, =\, \sum_{k=1}^n \frac{( Z^{k}_{T_k}, q'_{\psi_{k-1}})
 _{-1,1/q_{\psi_{k-1}}}
 }{( q',q'\rangle}+\sum_{k=1}^n A_{k,3}+o(1)\, .
\end{equation}

For the $A_{k,3}$ terms we need to use something more sophisticate. 
To deal with these terms in fact we rely on an averaging
phenomena. This method has been used in \cite{cf:BBB} for the same kind of problem. We write the Doob decomposition
\begin{equation}
\label{eq:doob dec Fk}
 \sum_{k=1}^{m}  A_{k,3} \, =\, M_m +\sum_{k=1}^m \gamma_k\, ,
\end{equation}
where
\begin{equation}
 \gamma_k\, =\, \bbE\left[ A_{k,3}| \cF_{T_{k-1}}\right]\, ,
\end{equation}
and $M_m$ is a $ \cF_{T_m}$-martingale with brackets
\begin{equation}
 \langle M\rangle_m\, =\, \sum_{k=1}^m \left( \bbE\left[ A_{k,3}^2|\cF_{T_{k-1}}\right]-\gamma_k^2\right)\, .
\end{equation}
We have
\begin{multline}
 \gamma_k\, =\,
\bbE\Bigg[\frac{1}{N^2}\sum_{i,j=1}^N\int_{T_{k-1}}^{T_k\wedge \tau^k_{\gs_N}}\dd W^i_s\int_{T_{k-1}}^{T_k\wedge \tau^k_{\gs_N}}\dd W^j_{s'}\int_\bbS \dd \theta
 \bigg(1
-\frac{1}{2\pi I^2_0(2Kr)q_{\psi_{k-1}}(\theta)}\bigg) \\  \int_\bbS\dd  \theta''
 \partial\cG^{\psi_{k-1}}_{T_k-s}(\theta,\varphi^{i,N}_s)J(\theta-\theta'')\partial\cG^{\psi_{k-1}}_{T_k-{s'}}(\theta'',\varphi^{j,N}_{s'})\bigg| \cF_{T_{k-1}} \Bigg]\ind_{\tau^{k}_{\gs_N}\geq T_{k-1}} \, ,
\end{multline}
where $\partial \cG^{\psi}_{t}(\theta,\theta'):=\partial_{\theta' } \cG^{\psi}_{t}(\theta,\theta')$, and from this we obtain
\begin{multline}
\label{eq:gammak5}
 \gamma_k\, =\, 
 \bbE\Bigg[\frac{1}{N}\int_{0}^{T\wedge \tilde \tau_{\gs_N}} \dd s \int_\bbS \tilde\mu_s(\dd \theta') 
 \int_\bbS\dd \theta 
 \bigg(1-\frac{1}{2\pi I^2_0(2Kr)q_{\psi_{k-1}}(\theta)}\bigg) 
 \\
 \int_\bbS\dd  \theta''
 \partial_{\theta'}\cG^{\psi_{k-1}}_{T-s}(\theta,\theta')J(\theta-\theta'')\partial_{\theta'}\cG^{\psi_{k-1}}_{T-s}(\theta'',\theta') 
 \Bigg]
 \ind_{\tau^{k}_{\gs_N}\geq T_{k-1}} \, ,
\end{multline}
with
\begin{equation}
 \tilde \mu_s\, :=\,\frac 1N \sum_{j=1}^N \gd_{\tilde\varphi^{j,N}_s}\, ,
\end{equation}
where  $\{\tilde \gp^{j,N}_s\}_{s \ge 0}$ is a solution of \eqref{eq:evol} 
depending on  $\cF_{T_{k-1}}$ only through
the initial condition
\begin{equation}
 \tilde \varphi^{j,N}_0\, =\, \varphi^{j,N}_{T_{k-1}}\, .
\end{equation}
The stopping time $\tilde \tau_{\gs_N}$ is defined as follows:
\begin{equation}
\tilde \tau_{\gs_N}\, :=\,  \inf\{s>0,\, \Vert \tilde\mu_s-q_{\psi_{T_{k-1}}}\Vert_{-1} >\gs_N\}\, .
\end{equation}
We now 
write  $ \tilde\mu_s(\dd \theta')=q_{\psi_{k-1}}(\theta')\dd \theta' + \tilde\nu^k_s(\dd \theta')$ 
and split the 
for right hand-side of \eqref{eq:gammak5} into the corresponding two terms. 

The term coming $q_{\psi_{k-1}}(\theta')\dd \theta'$ is zero as one can see
by using the symmetry:
\begin{equation}
\cG^{\psi_{{k-1}}}_s(\psi_{k-1}+\theta,\psi_{k-1}+\theta')\, =\, \cG^{\psi_{{k-1}}}_s(\psi_{k-1}-\theta,\psi_{k-1}-\theta')\, 
\end{equation}
which follows from the same statement with $\psi_{{k-1}}=0$, which in turn is a consequence
of the representation \eqref{sec:A_H} and of the fact that if $e_j$ is even (respectively, odd)
then $f_j$ is even (respectively, odd) too  (see Section~\ref{sec:linear} and Section~\ref{sec:A_H}).

For the  term containing  $\tilde\nu^k_s(\dd \theta')$ instead we get the bound
\begin{multline}
\label{bound integral term gammak}
 \Bigg| \bbE\Bigg[\frac{1}{N}\int_{T_{k-1}}^{T_k\wedge \tilde\tau_{\gs_N}} \dd s \int_\bbS \dd \tilde\nu^k_s(\theta') \int_\bbS\dd \theta \int_\bbS\dd  \theta''
 \bigg(1-\frac{1}{2\pi I^2_0(2Kr)q_{\psi_{k-1}}(\theta)}\bigg) \\
 \partial_{\theta'}\cG^{\psi_{k-1}}_{T_k-s}(\theta,\theta')J(\theta-\theta'')\partial_{\theta'}\cG^{\psi_{k-1}}_{T_k-s}(\theta'',\theta') \Bigg]
\Bigg| \\
\leq\, \bbE\Bigg[ \frac{1}{N}\int_{T_{k-1}}^{T_k\wedge \tau^k_{\gs_N}} \dd s \Vert \tilde\nu^k_s\Vert_{-1} \Vert H^k_s\Vert_{H_1} \Bigg]
\end{multline}
where
\begin{equation}
 H^k_s(\theta')\, =\,   \int_\bbS\dd \theta \int_\bbS\dd  \theta''
 \bigg(1-\frac{1}{2\pi I^2_0(2Kr)q_{\psi_{k-1}}(\theta)}\bigg) 
 \partial_{\theta'}\cG^{\psi_{k-1}}_{T_k-s}(\theta,\theta')J(\theta-\theta'')\partial_{\theta'}\cG^{\psi_{k-1}}_{T_k-s}(\theta'',\theta')\, .
\end{equation}
We  now plug in the explicit representation for the kernels:
\begin{multline}
 H^k_s(\theta')\, =\, \sum_{l_1,l_2=0}^\infty e^{-(\gl_{l_1}+\gl_{l_2})(T_k-s)} \int_0^{2\pi}\dd \theta \int_0^{2\pi}\dd  \theta''
 \bigg(1-\frac{1}{2\pi I^2_0(2Kr)q_{\psi_{k-1}}(\theta)}\bigg)\\
 e_{\psi_{k-1},l_1}(\theta)J(\theta-\theta'')e_{\psi_{k-1},l_2}(\theta'')f'_{\psi_{k-1},l_1}(\theta')f'_{\psi_{k-1},l_2}(\theta')\, .
\end{multline}
We obtain
\begin{multline}
\label{bound vert Hk}
 \Vert H^k_s \Vert_1 \, \leq \, \sum_{l,m=0}^\infty e^{-(\gl_{l_1}+\gl_{l_2})(T_k-s)} \Bigg|  \int_0^{2\pi}\dd \theta \int_0^{2\pi}\dd  \theta''
 \bigg(1-\frac{1}{2\pi I^2_0(2Kr)q_{\psi_{k-1}}(\theta)}\bigg)\\
 e_{\psi_{k-1},l_1}(\theta)J(\theta-\theta'')e_{\psi_{k-1},l_2}(\theta'') \Bigg| \left(\Vert f''_{\psi_{k-1},l_1} \Vert_2 \Vert f'_{\psi_{k-1},l_2}\Vert_\infty  +\Vert f'_{\psi_{k-1},l_1}\Vert_\infty \Vert f''_{\psi_{k-1},l_2}\Vert_2\right)\, .
\end{multline}
We aim at proving the convergence of this sum. For the integral term, thanks to the rotation symetry, we can limit the study to $\psi_{k-1}=0$. Since $J(\theta-\theta'')=-K\sin(\theta-\theta'')=-K\sin(\theta)\cos(\theta'')+K\cos(\theta)\sin(\theta'')$, we can split these double integrals into products of two simple ones. Corollary \ref{cor: unitary eigenfunction expansion} implies that there exists $l_0$ in $\bbN$ such that $e_{0,l_0+2p}$ and $e_{0,l_0+2p+1}$ can be writen as
\begin{equation}
 e_{0,l_0+2p}\, =\, p q_0^{1/2} (c_{1,l_0+2p}v_{1,l_0+p}+c_{2,l_0+2p}v_{2,l_0+p})+O\left(\frac{1}{p}\right)\, ,
\end{equation}
\begin{equation}
 e_{0,l_0+2p+1}\, =\, p q_0^{1/2} (c_{1,l_0+2p+1}v_{1,l_0+p}+c_{2,l_0+2p+1}v_{2,l_0+p})+O\left(\frac{1}{p}\right)\, ,
\end{equation}
where
\begin{equation}
 \sup_{l\geq l_0}\{ |c_{1,l}|,|c_{2,l}| \}\, <\, \infty
\end{equation}
and the functions $v_{i,l}$ are defined in Proposition \ref{prop:eigenvalues and eigenfunction expansion}. The $v_{i,l}$ are sums and products of sines and cosines, and there exists $h\in\bbN$ such that the only non-zero Fourier coefficients of $v_{i,l_0+p}$ are of index included between $h+p-2$ and $h+p+2$ and are bounded with respect to $p$. We deduce that the simple integral terms containing $e_{0,l_0+2p}$, which are of the form
\begin{equation}
 C\int_0^{2\pi} q_0^{\pm 1/2}(\theta) e_{0,l_0+2p}(\theta)g(\theta)\dd\theta\, ,
\end{equation}
where $g$ is sine or cosine and $C$ is a constant independent of $p$, are up to a correction of order $1/p$ a bounded linear combination of the Fourier coefficients of $q_0^{1/2}$ or $q_0^{-1/2}$ of index taken between $h+p-3$ and $h+p+3$. The same argument applies for $e_{0,l_0+2p+1}$. Since these Fourier terms decrease faster than exponentially 
(this can be seen by observing that  $\int_\bbS \exp(a \cos \theta) \dd\theta = 2\pi I_n(a)$ and  that 
$\exp(a \cos(\cdot))$ is an entire function), 
these simple integral terms are of order $1/p$. Using Remark \ref{rem:eigenvalues and eigenfunction expansion} and Corollary \ref{cor:fj} we deduce the following bound for $\Vert H^k_s\Vert_1$:
% From Corollary \ref{cor: unitary eigenfunction expansion} we deduce a bound for the integral term in \eqref{bound vert Hk}, since only the first Fourier terms occur in $J$, and the Fourier coefficients of $q_{\psi_{k-1}}$ (centered in $\psi_{k-1}$) decrease exponentially (ADD DETAILS?): for $l$ and $h$ large enough, 
% \begin{equation}
%  \Bigg|  \int_0^{2\pi}\dd \theta \int_0^{2\pi}\dd  \theta'' 
% \bigg(1-\frac{1}{2\pi I^2_0(2Kr)q_{\psi_{k-1}}(\theta)}\bigg)
 %e_{\psi_{k-1},h}(\theta)J(\theta-\theta'')e_{\psi_{k-1},l}(\theta'') \Bigg|
%\, \leq\,\frac{C}{lh}
%\end{equation}
%\begin{equation}
%  \Bigg|  \int_0^{2\pi}\dd \theta \int_0^{2\pi}\dd  \theta''
% \bigg(1-\frac{1}{2\pi I^2_0(2Kr)q_{\psi_{k-1}}(\theta)}\bigg)
%q'_{\psi_{k-1}}(\theta)J(\theta-\theta'')e_{\psi_{k-1},l}(\theta'') \Bigg|
%\, \leq\, \frac{C}{l}\, .
%\end{equation}
%From \eqref{eq:link e and f} and Corollary \ref{cor: unitary eigenfunction expansion} it comes
%\begin{equation}
% \Vert f'_{\psi_{k-1},l}\Vert_\infty\, \leq\, C
%\end{equation}
%\begin{equation}
%  \Vert f''_{\psi_{k-1},l}\Vert_\infty\, \leq\, Cl\, .
%\end{equation}
%All these estimates together give the bound for $\Vert H^k_s\Vert_1$: 
\begin{equation}
 \Vert H^k_s \Vert_1 \, \leq\, C+C\sum_{l_1,l_2=1}^\infty \frac{l_1+l_2}{l_1l_2}e^{(T_k-s)\frac{l_1^2+l_2^2}{C}}
+C\sum_{l=1}^\infty e^{(T_k-s)\frac{l^2}{C}}\, ,
\end{equation}
where the first term of the right hand side corresponds to the case $l_1=0,l_2=0$ in \eqref{bound vert Hk}, the second term corresponds to $l_1>0,l_2>0$ and the third term to
$l_1=0,l_2>0$ or $l_2=0,l_1>0$. Applying \eqref{bound integral term gammak} and Proposition \ref{prop:closeness traj manifold}, we get:
\begin{multline}
|\gamma_k|\, \leq \, C\frac{T^{1/2}}{N^{3/2}}N^{2\gz}\int_{T_{k-1}}^{T_k} \dd s \Vert H^k_s\Vert_1  
\leq\,C\frac{T^{1/2}}{N^{3/2}}N^{2\gz}\left( T + \sum_{h,l=1}^\infty \frac{h+l}{hl(h^2+l^2)}
+\sum_{l=1}^\infty \frac{1}{l^2}\right)\\
\leq\, C\frac{T^{3/2}}{N^{3/2}}N^{2\gz}\, ,
\end{multline}
and thus for $N$ large enough
\begin{equation}
\label{bound final gammak}
\sum_{k=1}^n |\gamma_k|\, \leq\,   \sqrt{\frac{T}{N}}N^{3\zeta}\, .
\end{equation}
On the other hand, applying Doob Inequality, \eqref{eq:def Aki} and Proposition \ref{prop:closeness traj manifold}, it comes
\begin{multline}
\label{doob ineq Ak3}
 \bbP\left[ \sup_{1\leq m\leq n} |M_m|\geq \sqrt{\frac{T}{N}}N^{3\zeta}\right]\, \leq\,\frac{N}{T N^{6\zeta}}\bbE\left[\langle M\rangle_n\right]\, \leq\,\frac{N^{1-6\zeta}}{T}\sum_{m=1}^n\bbE\left[A_{k,3}^2\right]\, 
\leq\, \frac{T^3}{N^{1+2\zeta}}\, .
\end{multline}
Since $T N^{-1-2\zeta}\rightarrow 0$ (see Remark \ref{rem:choice T zeta}), the combination of \eqref{bound final gammak} and \eqref{doob ineq Ak3} leads to
\begin{equation}
 \bbP\left[\left|\sum_{i=1}^n A_{k,3}\right|\geq \sqrt{\frac{T}{N}}N^{3\zeta}\right]\rightarrow 0\, ,
\end{equation}
and the proof is complete.
%After this change of index, we get the bounds for $h>1$, $l>1$ :
%\begin{equation}
%\Vert f'_{x_{k-1},h}f'_{x_{k-1},l}\Vert_1  \, \leq \, C(h+l)
%\end{equation}
%\begin{equation}
% \Vert f'_{x_{k-1},0}f'_{x_{k-1},l}\Vert_1  \, \leq \, Cl
%\end{equation}
%\begin{equation}
%  \Bigg|  \int_0^{2\pi}\dd \theta \int_0^{2\pi}\dd  \theta''
% \bigg(1-\frac{1}{2\pi I^2_0(2Kr)q_{x_{k-1}}(\theta)}\bigg)
% e_{x_{k-1},h}(\theta)J(\theta-\theta'')e_{x_{k-1},l}(\theta'') \Bigg|
%\, \leq\, \frac{C}{lh}
%\end{equation}
%\begin{equation}
%  \Bigg|  \int_0^{2\pi}\dd \theta \int_0^{2\pi}\dd  \theta''
% \bigg(1-\frac{1}{2\pi I^2_0(2Kr)q_{x_{k-1}}(\theta)}\bigg)
%q'_{x_{k-1}}(\theta)J(\theta-\theta'')e_{x_{k-1},l}(\theta'') \Bigg|
%\, \leq\, \frac{C}{l}
%\end{equation}

%Finally we get, using Proposition \ref{prop:closeness traj manifold},
%\begin{equation}
 %\sum_{k=1}^n| \gamma_k|\, \leq\, C\sqrt{\frac{T}{N}}N^{2\gz}\, .
%\end{equation}
%On the other hand, by Doob's inequality, recalling recalling \eqref{eq:doob dec Fk} and Proposition \ref{prop:closeness traj manifold} we get:
%\begin{equation}
% \bbP\left[\sup_{0\leq m\leq n}|M_m|\geq \sqrt{\frac{T}{N}}N^{2\gz}\right] \, \leq\,\sqrt{ \frac{N}{T}}N^{-2\gz}\bbE\left[\langle M\rangle_n\right]\, %\leq\,  \frac{T^{5/2}}{N^{1/2}}N^{2\gz}\, .
%\end{equation}

\end{proof}

\section{Approach to $M$}
\label{sec:approachM}

The long time behavior of the solutions to \eqref{eq:K} is rather well understood, so, in particular, 
we know that if $p_0$ is not on the set attracted to the unstable solution $\frac 1{2\pi}$, then it converges to
one probability density in $M$ (cf. Proposition~\ref{th:PDE}): this is directly extracted from
\cite{cf:BGP,cf:GPP}). This takes care of the first stage of the evolution, that is the approach to a small
neighborhood of $M$ also for the empirical measure. Things however change
when 
the empirical measure is
at a distance from $M$ which vanishes as $N \to \infty$, because the noise starts playing a role
and the difference between the empirical measure and the solution to the Fokker-Planck PDE \eqref{eq:K}
is no longer negligible. But we do need to get to distances of about $N^{-1/2}$ and this is done
by exploiting the {\sl approximate} contracting properties of the dynamics when the empirical measure is close 
to $q_\psi$. We talk about {\sl approximate} contracting properties because the noise plays against getting to $M$ and limits
the contraction effect of the linearized operator. Nevertheless, the proof mimics the deterministic proof
of nonlinear stability, to which the control of the noise is added. In principle the argument is straightforward:
one exploits the spectral gap of the linearized evolution. In practice, one has to set up an iterative procedure
similar to the one developed in Section~\ref{sec:apriori}, because the center of synchronization may change 
somewhat 
over long times. This procedure is however substantially easier than the one presented in  Section~\ref{sec:apriori},
mostly because here the control required on the noise is for substantially shorter times ($\log N$ versus $N$!),
so we will not go through the arguments in full detail again.

\medskip

\begin{proposition}
\label{th:approachM}
Choose   $p_0\in\cM_1 \setminus   U$ such that \eqref{eq:init-main} is satisfied.
Then there exists  $\psi_0$ (non random!), that depends on $K$ and $p_0(\cdot)$, $C$, that depends
only on $K$,  and a random variable $ \Psi_N$ such that 
\begin{equation}
\label{eq:approch M}
\lim_{N \to \infty}
\bbP \left( 
\left\Vert \mu_{N, \tilde \gep_N N} - q_{\Psi_N} \right \Vert_{-1}\, \le\,  \frac{N^{2\zeta}}{\sqrt{N}} \right) \, =\, 1\, ,
\end{equation}
where $\tilde \gep_N := \lfloor C \log N \rfloor /N$, and $\lim_N\Psi_N= \psi_0$ in probability. Moreover for $\gep$ and $\gep_N$ 
as in Theorem~\ref{th:main} we have
\begin{equation}
\label{eq:approch M2}
\lim_{N \to \infty}
\bbP \left( \sup_{t \in [\gep_N N,\tilde \gep_N N ]}
\left\Vert \mu_{N, t} - q_{\psi_0} \right \Vert_{-1}\, \le\,  \gep \right) \, =\, 1\, ,
\end{equation}
\end{proposition}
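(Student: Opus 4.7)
\textbf{Proof plan for Proposition~\ref{th:approachM}.}
The strategy is to split the evolution up to time $\lfloor C\log N\rfloor$ into two consecutive regimes: a \emph{deterministic phase} on a fixed finite interval $[0,t_1]$, during which the PDE \eqref{eq:K} is a faithful approximation and carries the profile into a small neighborhood of $q_{\psi_0}$, followed by a \emph{contraction phase} on $[t_1,\lfloor C\log N\rfloor]$, during which the spectral gap of the linearized operator drives the empirical measure exponentially fast into an $O(N^{-1/2+2\gz})$-neighborhood of $M$.

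For the first phase I would invoke the global analysis of \cite{cf:GPP}: since $p_0\in \cM_1\setminus U$, the PDE solution $t\mapsto p_t$ converges exponentially (in $H_{-1}$, in fact in stronger norms) to a unique $q_{\psi_0}\in M$, and this defines the deterministic phase $\psi_0$. I fix $t_1$ (depending only on $K$ and $p_0$) large enough that $\Vert p_{t_1}-q_{\psi_0}\Vert_{-1}\le \eta$ for some small $\eta\in (0,\gs/4)$ to be chosen. Classical finite-time mean-field concentration estimates for weakly interacting diffusions (in the spirit of \cite{cf:Gartner}, recast in the $H_{-1}$ topology via Appendix~\ref{sec:A_H}) give
\begin{equation*}
\bbP\Bigl(\sup_{t\in [0,t_1]}\Vert \mu_{N,t}-p_t\Vert_{-1}\ge \eta\Bigr)\,\longrightarrow\, 0.
\end{equation*}
Combined with \eqref{eq:init-main}, this shows that with probability tending to one $\Vert \mu_{N,t_1}-q_{\psi_0}\Vert_{-1}\le 2\eta<\gs/2$, so Lemma~\ref{lem:def proj on M} makes the projection $\psi_0^{(N)}:=\proj(\mu_{N,t_1})$ well defined and within $O(\eta)$ of $\psi_0$.

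For the contraction phase I would rerun the iterative scheme of Section~\ref{sec:sheme} on short blocks of a \emph{fixed} length $T'$ (independent of $N$), chosen so that $C_0 e^{-\gl_1 T'}<1/4$, with $C_0$ the operator-norm prefactor coming from Lemma~\ref{th:add1}. Decomposing $\nu^{k+1}$ exactly as in \eqref{eq:decomp nu0} and projecting \eqref{eq:ito nu^k} onto $\mathrm{Range}(L_{\psi_{k-1}})$ produces a geometric recursion
\begin{equation*}
\Vert \nu^{k+1}_0\Vert_{-1}\,\le\, C_0 e^{-\gl_1 T'}\Vert \nu^k_0\Vert_{-1} \,+\,C_1\sup_{s\in[0,T']}\Vert\nu^k_s\Vert_{-1}^2 \,+\,C_2\sup_{s\in[0,T']}\Vert Z^{k,\perp}_s\Vert_{-1},
\end{equation*}
valid as long as $\sup_s\Vert \nu^k_s\Vert_{-1}\le \gs$. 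The orthogonal noise is bounded by $N^{-1/2+\gz}$ uniformly in $k$ by an obvious variant of Lemma~\ref{lem:bound Zk}. Starting from $\Vert \nu^1_0\Vert_{-1}\le 2\eta$, the deterministic factor $(C_0 e^{-\gl_1 T'})^k$ reaches the noise floor after $k_N\sim (2\gl_1 T')^{-1}\log N$ steps, and the recursion then saturates at $O(N^{-1/2+2\gz})$. Setting $C:=k_N T'/\log N$ and $\Psi_N:=\psi_{k_N}$ yields \eqref{eq:approch M}, while the total phase drift $|\psi_{k_N}-\psi_0^{(N)}|\le C' k_N N^{-1/2+\gz}=O(N^{-1/2+\gz}\log N)\to 0$ (together with $\psi_0^{(N)}\to \psi_0$) gives $\Psi_N\to\psi_0$ in probability.

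The conclusion \eqref{eq:approch M2} would then follow by combining the uniform PDE approximation on $[\gep_N N,t_1]$ (where $p_t$ already sits within $\gep/2$ of $q_{\psi_0}$ once $t_1$ is large and $\gep_N N\ge 1$) with the uniform-on-blocks control of the contraction phase. The main obstacle is the bookkeeping of projection updates across blocks: one must verify that the replacement $\psi_{k-1}\mapsto \psi_k$ does not spoil the geometric contraction. This is structurally analogous to Proposition~\ref{prop:closeness traj manifold}, but because we now run only for a time $O(\log N)$ rather than $O(N)$, all subleading contributions (the nonlinear quadratic terms, the projection discrepancies of order $\Vert\nu^k\Vert_{-1}^2$, and the martingale drifts controlled in Proposition~\ref{th:dyntan}) are readily absorbed into the noise floor, so this regime is substantially less delicate than the analysis of Sections~\ref{sec:apriori}--\ref{sec:dyntan}.
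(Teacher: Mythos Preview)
Your two-phase strategy (deterministic PDE approach on $[0,t_1]$, then iterative contraction on blocks of fixed length until hitting the noise floor $N^{2\gz-1/2}$) is exactly the paper's argument. The paper likewise fixes a small $h$, uses Proposition~\ref{th:PDE} and a finite-time Gr\"onwall estimate to get $\Vert\mu_{N,t_0}-q_{\psi_0}\Vert_{-1}\le h$ with high probability, and then runs a scheme with constant block length $(1/\gl_1)\vert\log\ga\vert$, halving the distance at each step via the decomposition \eqref{eq:decomp nu0}. So the architecture is identical.

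There is one concrete error in your bookkeeping, though: the claimed phase drift bound $\vert\psi_{k_N}-\psi_0^{(N)}\vert\le C'k_N N^{-1/2+\gz}$ is not correct as stated. The increment $\psi_k-\psi_{k-1}=\proj(\mu_{T_k})-\proj(\mu_{T_{k-1}})$ is controlled by $\Vert\nu^k_{T_k}-\nu^k_{T_{k-1}}\Vert_{-1}$, and in the early blocks this is of order $h_{k-1}\sim (1/2)^{k-1}\eta$, \emph{not} $N^{-1/2+\gz}$. Even if you exploit that $\nu^k_{T_{k-1}}\perp q'_{\psi_{k-1}}$, the tangential component of $\nu^k_{T_k}$ picks up a contribution $O(h_{k-1}^2)$ from the nonlinear term in addition to the $O(N^{-1/2+\gz})$ from the noise. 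Summing the geometric series gives a total drift of order $\eta$ (or $\eta^2$), not $o(1)$ as $N\to\infty$. This is exactly what the paper obtains: $\bbP[\vert\Psi_N-\psi_0\vert>Ch]\to 0$ for \emph{fixed} $h$. The convergence $\Psi_N\to\psi_0$ in probability then follows because $h$ (your $\eta$) can be chosen arbitrarily small at the outset, at the cost of enlarging $t_1$; it does \emph{not} follow from your $O(N^{-1/2+\gz}\log N)$ estimate, which omits the dominant contribution. The fix is easy (replace your bound by the summable series $\sum_k h_{k-1}=O(\eta)$ and then let $\eta\downarrow 0$), but the argument as written is incomplete on this point.
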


\begin{proof}
 
The proof is divided in two parts. First we prove, using the convergence of $\mu_t=\mu_{N,t}$ to the deterministic solution $p_t$, that for a given $h>0$ (arbitrarily small), there exists $t_0$ such that for $\gep$ small enough, $\bbP(\text{dist}(\mu_{N,t_0},M)\leq h)\rightarrow 1$ when $N\rightarrow \infty$. Then we show that after a time of order $\log N$, the empirical measure $\mu_{t}$ moves to a distance $N^{\zeta-1/2}$ from $M$, without a macroscopic change of the phase.

\medskip

The first part of the proof relies on the following result:
\medskip

\begin{proposition}
\label{th:PDE}
If $p_0\in \cM_1\setminus U$ then there exists $\psi\in \bbS$ such that
$\lim_{t \to \infty} p_t = q_\psi$ in $C^k(\bbS; \bbR)$ (for every $k$). 
\end{proposition}

\medskip

Proposition~\ref{th:PDE}  is essentially  taken from
\cite{cf:GPP}, in the sense that it follows by piecing together some results taken from 
\cite{cf:GPP}. We give below a proof that of course relies on \cite{cf:GPP}.
We point out that the very same result can be proven also by adapting entropy production 
arguments, like in \cite{cf:ABM}. 

\medskip

Proposition~\ref{th:PDE} guarantees that  the deterministic solution $p_t$ converges to a element $q_{\psi_0}$ of $M$.
Therefore for $t\ge t_0$, we have that  $p_t$ is no farther than  $h/2$ from $q_{\psi_0}$ (this is a statement that can be made for example in $C^k$, but here we just need
it in $H_{-1}$). Actually, it is not difficult to see that one can choose $t_0 = -\frac{2}{\gl_1}\log h$, for $h$ sufficiently small
($\gl_1$ is the spectral gat of $L_{q_{\psi_0}}$), but this is of little relevance here. 
 Applying the It\^o formula 
\begin{equation}
\label{eq:diff mu p}
 \mu_t-p_t\, =\,e^{t\frac{\Delta}{2}}( \mu_0-p_0)-\int_0^t e^{(t-s)\frac{\Delta}{2}}[\mu_s J*\mu_s-p_sJ*p_s]\dd s + z_t\, ,
\end{equation}
where
\begin{equation}
 z_t\, =\, \frac{1}{N}\sum_{j=1}^N \partial_{\theta'}\cH(\theta,\phi^{j,N}_s)\dd W^j_s\, ,
\end{equation}
$e^{t\frac \Delta 2}$ is the semi-group of the Laplacian and $\cH$ is the kernel of $e^{s\frac{\Delta}{2}}$ in $\bbL^2$. Define $W_N=\{ w,\, \Vert \mu_0-p_0\Vert_{-1} \leq \gep\}$. Using the classical estimate $\Vert e^{t \Delta/2}u\Vert_{-1}\leq \frac{C}{\sqrt{t}}\Vert u\Vert_{-2}$ and similar argument as in Section \ref{sec:apriori}, we deduce that there exist events $\tilde W_N\subset W_N$ such that $\bbP(\tilde W_N)\rightarrow 1$ and that for all outcomes in $\tilde W_N$ we have
\begin{equation}
 \sup_{0\leq t\leq t_0}\Vert z_t\Vert_{-1}\leq \sqrt{\frac{t_0}{N}}N^\zeta\, .
\end{equation}
From now, we restrict ourselves to $\tilde W_N$.
From \eqref{eq:diff mu p} we get for all $t\in[0,t_0]$
\begin{equation}
 \Vert \mu_t-p_t \Vert_{-1}\, \leq\, \gep+C\int_0^t\frac{1}{\sqrt{t-s}}\Vert \mu_s-p_s \Vert_{-1} \dd s + \sqrt{\frac{t_0}{N}}N^\zeta\, .
\end{equation}
The Gronwall-Henry inequality (see \cite{cf:SellYou}) implies that there exists $\gamma>0$ (independent of $\gep$ and $N$) such that
\begin{equation}
\sup_{t \le t_0}
 \Vert \mu_{t_0}-p_{t_0} \Vert_{-1}\, \leq \, \left( \gep + \sqrt{\frac{t_0}{N}}N^\zeta \right)e^{\gamma t_0}\, .
\end{equation}
So for $\gep=h/4$ and $N$ large enough, $\Vert \mu_{t_0}-q_{\psi_{0}} \Vert_{-1} \leq h$ on the event $\tilde W_N$.

\medskip

To show that we enter a neighborhood of size {\sl slightly larger} than $N^{-1/2}$, it will be $N^{2 \zeta-1/2}$,
we set up an  iterative scheme. It is very similar to the one given in Section \ref{sec:sheme}, but with times $t_i$ bounded with respect to $N$. This times are chosen such that after each iteration, the distance between the empirical measure and $M$ is at least divided by $2$. We define $h_0:=h$ and for $m \geq 1$
\begin{equation}
 t_m\, :=\,  t_{m-1}+\frac{1}{\gl_1}|\log \ga|\, ,
\end{equation}
\begin{equation}
 h_{m}\, :=  \frac 12 h_{m-1}\, ,
\end{equation}
until the index $m_f$ defined by
\begin{equation}
 m_f\, :=\, \inf\left\{m\geq 1,\, h_m\leq N^{2\gz-1/2}\right\}\, .
\end{equation}
The constant $\ga$ above does not depens on $N$ and will be chosen below.
%The constant $C$ above is such that for all $u\in R(L_q)$ we have $\Vert e^{t L_q }u\Vert_{-1}\leq C\Vert u\Vert_{-1}$, so $\Vert e^{(t_{m}-t_{m-1})L_q}u\Vert_{-1}\leq \frac 16 \Vert u\Vert_{-1}$: the constant $C$ comes from exploiting 
%the norm equivalence and it can be set to one by using the norm with weight $1/q$, but we are not particularly careful about consents in this proof, so this would be an unnecessary burden. 
It is now easy to check that $m_f$ is of order $\log N$.
Then we define $\tilde \tau_0\, :=\, t_0$, and for $1\leq m\leq m_f+1$
\begin{equation}
 \tilde \psi_{m-1}\, :=\, \proj (\mu_{t_{m-1}})\, ,
\end{equation}
\begin{equation}
 \tilde\nu^{m}_{t_{m-1}}\, :=\, \mu_{t_{m-1}}-q_{\tilde\psi_{m-1}}
\end{equation}
if $\text{dist}(\mu_{t_{m-1}},M)\leq \gs$ (see Lemma~\ref{lem:def proj on M}).  We consider for $1\leq m\leq m_f$ the stopping times
\begin{multline}
 \tilde\tau_m\, :=\, \tilde\tau_{m-1} \ind_{\{\tilde\tau_{m-1}<t_{m-1}\}}\\+\inf\{s\in[t_{m-1},t_m],\, \Vert \mu_s-q_{\tilde\psi_{m-1}}\Vert_{-1}\geq  \gs\}\ind_{\{\tilde\tau_{m-1}\geq t_{m-1}\}}\, .
\end{multline}
and the process solution of
\begin{multline}
\label{eq:def tilde nu}
 \tilde\nu^{m}_{t}\, =\, 
 \ind_{\{\tilde\tau_m<t_{m-1}\}}\tilde\nu^m_{\tilde\tau_{m}} +  \ind_{\{\tilde\tau_m\geq t_{m-1}\}} \times \\
\left(e^{-(t\wedge \tilde\tau_m-t_{m-1}) L_{\tilde\psi_{m-1}}} \tilde\nu^{m}_{t_{m-1}}-\int_{t_{m-1}}^{t\wedge \tilde\tau_m} e^{-(t\wedge \tilde\tau_m -s)L_{\tilde\psi_{m-1}}}\partial_\theta[\tilde\nu^{m}_s J*\tilde\nu^{m}_s]\dd s +\tilde Z^{m}_{t\wedge \tilde\tau_m}\right)\, ,
\end{multline}
where 
\begin{equation}
 \tilde Z^m_{t}\, =\, \frac1N \sum_{j=1}^N \int_{t_{m-1}}^t \partial_{\theta'}\cG^{\tilde\psi_{m-1}}_{t-s}\left(\theta,\varphi^{j,N}_s\right)\dd W^j_s\, .
\end{equation}
With the same arguments as given in Lemma \ref{lem:bound Zk}, we can prove (recall that $m_f$ is of order $\log N$) that the probability of the event
\begin{equation}
 \gO_N\, :=\, \left\{ \sup_{1\leq m\leq m_f} \sup_{t_{m-1}\leq t\leq t_m} \left\Vert\tilde Z^m_t\right\Vert_{-1} \leq \sqrt{\frac{t_m-t_{m-1}}{N}}N^{\zeta}\right\}
\end{equation}
tends to $1$ when $N\rightarrow\infty$. From now, we assume that $\gO_N$ is verified. We insist on the fact that the generic constants $C$ appearing in the following do not depend on $N$, and if not mentioned do not depend on $\ga$. From Lemma \ref{th:add1} and \eqref{eq:def tilde nu} we get thet for all $1\leq m\leq m_f$,
\begin{equation}
\label{bound:tilde nu}
 \Vert \tilde\nu^m_t\Vert_{-1}\, \leq \, C h_{m-1}+C(t+\sqrt{t})\sup_{s\in [t_{m-1},t]}\Vert \tilde\nu^m_s\Vert_{-1}+\sqrt{\frac{t_m-t_{m-1}}{N}}N^{\zeta}\, .
\end{equation}
We now prove that for $1\leq m\leq m_f-1$, $\Vert \nu^{m}_{t_{m-1}}\Vert_{-1}\leq h_{m-1}$ implies $\Vert\nu^{m+1}_{t_m}\Vert_{-1}\leq h_{m}$, and that $\Vert \tilde\nu^{m_f}_{t_{m_f-1}}\Vert_{-1}\leq h_{m_f-1}$ implies $\Vert\tilde\nu^{m_f+1}_{t_{m_f}}\Vert_{-1}\leq N^{2\gz-1/2}$. Define
\begin{equation}
 s^*_m\, :=\, \sup\{s\in [t_{m-1},t_m],\, \Vert \tilde\nu^m_s\Vert_{-1}\leq h_{m-1}^{3/4}\}\, . 
\end{equation}
Then for $s<s^*_m$, if $\Vert \nu^{m}_{t_{m-1}}\Vert_{-1}\leq h_{m-1}$ ,we get using \eqref{bound:tilde nu}
\begin{equation}
\label{bound:tilde nu 2}
\Vert\tilde \nu^m_s\Vert_{-1}\, \leq\, Ch_{m-1}+C(s+\sqrt{s})h_{m-1}^{3/2}+\sqrt{\frac{t_{m}-t_{m-1}}{N}}N^{\zeta}\, .
\end{equation}
Since $N^{2\gz-1/2}\leq h_{m-1}$, we deduce that $s^*_m=t_m$ if $h_0$ is small enough. Then using \eqref{eq:def tilde nu} we get
\begin{equation}
\Vert \tilde\nu^m_{t_m}\Vert_{-1}\, \leq\, C\ga h_{m-1} +C h_{m-1}^{3/2} + \sqrt{\frac{t_m-t_{m-1}}{N}}N^\zeta\, .
\end{equation}
Since $ h_{m-1}^{3/2} \leq \ga h_{m-1}$ for $h_0$ small enough, it leads us to (recall that $h_{m-1}=2h_m$)
\begin{equation}
\label{bound:tilde nu 3}
 \Vert \tilde\nu^m_{t_m}\Vert_{-1}\, \leq\, 4C\ga h_{m}+ \sqrt{\frac{t_m-t_{m-1}}{N}}N^\zeta\, .
\end{equation}
If $m<m_f$, $\sqrt{\frac{t_m-t_{m-1}}{N}}N^\zeta\leq C\ga h_{m}$ and thus $\Vert \tilde\nu^m_{t_m}\Vert_{-1}\leq 5C\ga h_{m}$. If $m=m_f$, $h_m\leq N^{2\gz-1/2}$ and thus $\Vert \tilde\nu^m_{t_m}\Vert_{-1}\leq 5C\ga N^{2\gz-1/2}$. 
We now have a good control on $\mu_{t_m}=q_{\tilde\psi_{m-1}}+\tilde\nu^m_{t_m}$, and project it with respect to $\tilde \psi_{m}$ (writing $\mu_{t_m}=q_{\tilde\psi_{m}}+\tilde\nu^{m+1}_{t_m}$) to get a bound for $\Vert \tilde\nu^{m+1}_{t_m}\Vert_{-1}$. We use the same decomposition as the proof of Proposition \ref{prop:closeness traj manifold}:
\begin{multline}
\label{eq:decomp tilde nu}
 \tilde \nu^{m+1}_{t_m}\, =\, q_{\tilde\psi_{m-1}}+\tilde\nu^m_{t_m}-q_{\tilde\psi_{m}} 
                          =\, P^\perp_{\tilde\psi_{m}}[q_{\tilde\psi_{m-1}}+\tilde\nu^m_{t_m}-q_{\tilde\psi_{m}}] \\
                          =\, \left(P^\perp_{\tilde\psi_{m}}-P^\perp_{\tilde\psi_{m-1}}\right)[q_{\tilde\psi_{m-1}}+\tilde\nu^m_{t_m}-q_{\tilde\psi_{m}}] +P^\perp_{\tilde\psi_{m-1}}[q_{\tilde\psi_{m-1}}-q_{\tilde\psi_{m}}]+P^\perp_{\tilde\psi_{m-1}}\tilde\nu^m_{t_m}\, .
\end{multline}
Since the projection $\proj$ is smouth, we get the bound
\begin{equation}
\label{bound:diff tilde psi}
 |\tilde\psi_m-\tilde\psi_{m-1}|\, =\, |\proj(\mu_{t_m})-\proj(\mu_{t_{m_1}})|\, \leq\,  C\Vert \mu_{t_m}-\mu_{t_{m_1}}\Vert_{-1}\, \leq\, C\Vert \tilde\nu^m_{t_m}-\tilde\nu^m_{t_{m-1}}\Vert_{-1}\, .
\end{equation}
But \eqref{bound:tilde nu 3} implies in particular that 
\begin{equation}
\Vert \tilde\nu^m_{t_m}\Vert_{-1}\leq C(1+4\ga)h_{m-1} \, ,
\end{equation}
which implies, using also \eqref{bound:diff tilde psi},
\begin{equation}
\label{bound:diff tilde psi 2}
|\tilde\psi_m-\tilde\psi_{m-1}|\, \leq\, 2C(1+4\ga)h_{m-1}\, .
\end{equation}
Using similar arguments as in the proof of Proposition \ref{prop:closeness traj manifold} (using in particular the smouthness of the projection $P^\perp_\psi$), we see that the two first terms of the right hand side in \eqref{eq:decomp tilde nu} are of order $h_{m-1}^2$. More precisely, there exists a constant $C'[\ga]$ depending in $\ga$ (increasing in $\ga$) such that  
\begin{equation}
 \Vert \tilde \nu^{m+1}_{t_m}\Vert_{-1}\, \leq C'[\ga]h_{m-1}^2+C\Vert \tilde\nu^m_{t_m}\Vert_{-1}\, .
\end{equation}
So, since $\Vert \tilde\nu^m_{t_m}\Vert_{-1}\leq 5C\ga h_{m}$ for $m<m_f$ and $\Vert \tilde\nu^{m_f}_{t_{m_f}}\Vert_{-1}\leq 5C\ga N^{2\gz-1/2}$, if $h_0$ and $\ga$ are small enough 
we get $\Vert \tilde\nu^{m+1}_{t_m}\Vert_{-1}\leq h_{m}$ for $m<m_f$ and $\Vert \tilde\nu^{m_f+1}_{t_{m_f}}\Vert_{-1}\leq N^{2\gz-1/2}$.

We have therefore shown that after a time of order $\log N$, the empirical measure comes at distance $N^{2\zeta-1/2}$ from from $q_{\tilde\psi_{m_f}}$. This angle $\tilde\psi_{m_f}$ corresponds to the angle $\Psi_N$ in the Proposition~\ref{th:approachM}. So it remains to prove that $\tilde\psi_{m_f}$ converges to $\psi_0$ in probability as $N$ goes to infinity. 
We decompose
\begin{equation}
 |\tilde\psi_{m_f}-\psi_0|\, \leq\, |\tilde\psi_0-\psi_0|+\sum_{m=1}^{m_f}|\tilde\psi_{m}-\tilde\psi_{m-1}|\, .
\end{equation}
We restrict our study on the event $\gO_N\bigcap\tilde W_N$, whose probability tends to $1$. Since $\Vert \mu_{t_0}-q_{\psi_0}\Vert_{-1}\leq h$ and the projection $\proj$ is smouth, we get
\begin{equation}
 |\tilde\psi_0-\psi_0|\leq C h
\end{equation}
and \eqref{bound:diff tilde psi 2} implies (recall $h_0=h$ and $h_{m-1}=2h_m$)
\begin{equation}
 |\tilde\psi_{m}-\tilde\psi_{m-1}|\leq C 2^{1-m} h\, .
\end{equation}
Consequently for $C$ large enough $\bbP[|\tilde\psi_{m_f}-\psi_0|>Ch]\rightarrow_{N\rightarrow\infty} 0$, which completes the proof
of \eqref{eq:approch M}. The bound \eqref{eq:approch M2} is much rougher and it follows directly from
the argument we have used for establishing \eqref{eq:approch M}. This completes the proof of Proposition~\ref{th:approachM}
\end{proof}

\medskip

\noindent
{\it Proof of Proposition~\ref{th:PDE}}
The crucial issues are the gradient flow structure of \eqref{eq:K} and its dissipativity  properties.
The gradient structure of \eqref{eq:K} \cite{cf:BGP}
implies that the functional 
\begin{equation}
\cF (p):= \frac 12 \int_\bbS p(\theta) \log p(\theta) \dd \theta
-\frac K2 \int_ \bbS\int_\bbS p(\theta) \cos( \theta- \theta ') p(\theta ') \dd \theta \dd \theta'  \, ,
\end{equation}
is non increasing along the time evolution.
The dissipativity properties proven in \cite[Theorem~2.1]{cf:GPP} show that for every 
$k \in \bbN$ and $a>0$ we can find $\tilde t$ such that $\Vert p_t \Vert_{C^k}< a$ for every $t \ge \tilde t$.
Therefore for any $k$ there exists $\{t_n\}_{n=1,2, \ldots}$ such that $t_{n+1}-t_n>1$ and $\lim_n p_{t_n}$
exists in $C^k$ and we call it $p_\infty$. An immediate consequence is that $\lim_n \cF (p_{t_n}) 
=\cF (p_{\infty})$. But we can go beyond by introducing the semigroup $S_t$ associated to
\eqref{eq:K}, by setting $S_{t'}p_t=p_{t+t'}$. \cite[Theorem~2.2]{cf:GPP} implies the continuity 
of this semigroup in $C^k$, so that, since for $t \in [0,1]$ we have $t_n \le t_n + t < t_{n+1}$,
we obtain $\cF (S_t p_\infty)=  \cF (p_\infty)$.  Therefore $\partial_t\cF (S_t p_\infty)=0$,
but the condition $\partial_t\cF (p_t)=0$, for a solution of \eqref{eq:K}, directly implies that
$\partial^2 _\theta p_t= 2 \partial_\theta(p_t J*p_t)$, which is the stationarity condition for \eqref{eq:K}. Therefore $p_t$
is either $q_\psi$, for some $\psi$, or it coincides with $\frac 1{2\pi}$ (see \eqref{eq:q}-\eqref{eq:M}).

Let us point out that if $p_{t_n}$ converges to $\frac 1{2\pi}$ then $\{p_t\}_{t>0}$ itself converges to
$\frac 1{2\pi}$. This is just because $\cF\left(\frac 1{2\pi}\right) > \cF( q_\psi)$, so that 
if $\lim_n p_{t'_n}= q_\psi$ and $\lim_n p_{t_n}=\frac 1{2\pi}$
then it suffices to choose $n$ such that $\cF(p_{t'_n})< \cF\left(\frac 1{2\pi}\right) $
and $m$ such that $t_m>t'_n$ to get $\cF(p_{t'_n})\ge \cF(p_{t_m})\ge \cF\left(\frac 1{2\pi}\right)$, which is impossible.

So we have seen that either $\lim_{t \to \infty} p_t
= \frac 1{2\pi}$ or all limit points are in $M$. The stronger result we need 
is the convergence also when the limit point is not $\frac 1{2\pi}$. This result is provided 
by the nonlinear stability result \cite[Therem~4.6]{cf:GPP} which says  that if
$p_0$ is in a neighborhood of $M$ (the result is proven for a $\bbL ^2$ neighborhood,
which is much more than what we need here), then there exists $\psi$ such that
$\lim_{t \to \infty} p_t= q_\psi$ in $C^k$. 

To complete the proof we need to characterize the portion of $\cM_1$ which is attracted 
by $\frac 1{2\pi}$, that is we need to identify the stable manifold of the unstable point with the set $U$
in \eqref{eq:Umanif}.
But this is the content of \cite[Proposition~4.4]{cf:GPP}.
\qed

\section{Proof of Theorem~\ref{th:main}}
\label{sec:proofmain}

The proof of Theorem~\ref{th:main} relies 
on the results  of the previous sections
and on  a convergence argument of the process in the tangent space that we give here. 

\medskip

\noindent
{Proof of Theorem~\ref{th:main}.}
First of all Proposition~\ref{th:approachM} takes care of the evolution up to time $N \tilde \gep_N= C\log N$
and provides an estimate on the closeness of the empirical measure to the manifold $M$ that allows
to apply directly  
Proposition~\ref{prop:closeness traj manifold} and then Proposition~\ref{th:dyntan}.
Note that the iterative scheme that we have set up in
Section~\ref{sec:sheme} has been presented without asking $\psi_0$ not to be random 
or not to depend on $N$. In fact we start the iterative scheme at time $N \tilde\gep_N$ and from the random phase $\Psi_N$
of Proposition~\ref{th:approachM}
that converges in probability to the (non random) value $\psi_0$. 
Of course there is here an abuse of notation in the use of $\psi_0$, but notice actually that, by the rotation invariance of the system,
we can actually consider without loss of generality that the empirical measure $\mu_{N, C \log N}$ has precisely 
the phase $\psi_0$. Moreover we make a time shift of $N\tilde\gep_N$, so that the phase is $\psi_0$ at time $T_0=0$. The result in Theorem~\ref{th:main} is given for times
starting from $N\gep_N$ and not $N\tilde\gep_N$, but as stated in Proposition~\ref{th:approachM}, the empirical measure stays close to $q_{\psi_0}$ in the time interval
$[N\gep_N,N\tilde\gep_N]$. Therefore we have the finite sequence of times $T_0, T_1, \ldots, T_n$, with the corresponding phases $\psi_0, \psi_1, \cdot,
\ldots, \psi_n$ and we define $\psi_t$ for every $t\in [0, T_n]$ by linear interpolation. We assume $T_n> \tau_f N$.
 
 We then note that, in view of
\eqref{ineq:bound sup nuk}, the control on the phases, see Proposition~\ref{th:dyntan},   on the times $T_1, T_2, \ldots$ of our iteration scheme
suffices not only to control the distance between the empirical measure $\mu_{N, t}$ and  $q_{\psi_{t}}$, in the $H_{-1}$ norm,
for $t= T_k$, but for every $t\in [0, T_n]$. We are now ready to identify the process $W_{N, \cdot}$ of Theorem~\ref{th:main}:
\begin{equation}
\label{eq:WNmain}
W_{N, \tau}\, :=\, \frac{\psi_{\tau N} -\psi_0}{D_K}\, ,
\end{equation}
where we recall that $\tau\in [0, T_n/N]$. We are therefore left with showing that
$W_{N, \cdot}$ converges to standard Brownian motion. Note that it would be equally possible and maybe
more natural to define $W_{N, \tau}$, for $\tau \ge  \gep_N$ as in the right-hand side of \eqref{eq:WNmain},
but with $\tau$ replaced by $\tau- \gep_N$, and $W_{N, \tau}=0$ for $\tau \in [0, \gep_N]$. In view of the statement 
we want to prove this detail is irrelevant.

In proving the convergence to Brownian motion 
we apply Proposition~\ref{th:dyntan} and replace the process $\psi_\cdot$ with the
cadlag process  $\psi_0+ M_{N , \cdot} \in D([0, T_n /N]; \bbR)$ defined by 
\begin{equation}
M_{N, \tau} := \sum_{k \in \bbN:\, T_k \le  N \tau}\gD M _{N, k}\, ,
\end{equation}
and 
\begin{equation}
\gD M _{N, k} \, :=\, 
 \frac{( Z^{k}_{T_k}, q'_{\psi_{k-1}})_{-1,1/q_{\psi_{k-1}}}}{( q',q')_{-1,1/q}}\, .
\end{equation}
It is straightforward to see that $M_{N ,\cdot}$ is a martingale with respect to the filtration 
$\tilde \cF _\tau:= \cF_{\lfloor \tau T\rfloor / T}$, where $\cF_\cdot$ is the natural filtration
of $\{W^j_{N \cdot}\}_{j=1, \ldots, N}$: the martingale is actually in $L^p$, for every $p$, as  
 the moment estimates is Section~\ref{sec:apriori} show. 
 We can now apply  the Martingale Invariance Principle 
in the form given by \cite[Corollary~3.24, Ch.~VIII]{cf:JS} to $M_{N , \cdot}$ for continuous time
martingales: the hypotheses to verify in the case of piecewise constant cadlag martingales boil
down to  the variance convergence condition that for every $\tau \in [0, \tau_f]$
\begin{equation}
\label{eq:JSc1}
\lim_{N \to \infty} \sum_{k \in \bbN:\, T_k \le \tau N} \bbE \left[ \left(\gD M_{N, T_k}\right)^2 \Big \vert\,  \cF _{T_{k-1}}
\right]\, =\, \tau D_K^2\, ,
\end{equation}
in probability, 
and the Lindeberg condition that for every $\gep>0$ in probability we have
\begin{equation}
\label{eq:JSc2}
\lim_{N \to \infty} \sum_{k \in \bbN:\, T_k \le \tau N} \bbE \left[\left(\gD M_{N, T_k}\right)^2; \,  \gD M_{N, T_k}^2> \gep  \, \Big \vert\,  \cF _{T_{k-1}}
\right]\, =\, 0\,  .
\end{equation}
For what concerns \eqref{eq:JSc1} we have
\begin{equation}
\bbE \left[ \left(\gD M_{N, T_k}\right)^2 \Big \vert\,  \cF _{T_{k-1}}
\right]\, =\, \frac 1{N\Vert q' \Vert _{-1,1/q}^2} \int_{T_{k-1}}^{T_k}  \int_\bbS\left(f'_{\psi_{k-1},0}(\theta)\right)^2 
\mu_{N, s}(\dd \theta) \dd s\, .
\end{equation}
Now take the sum over $k$ and 
use the uniform estimate \eqref{ineq:bound sup nuk}
 of Proposition~\ref{prop:closeness traj manifold} to replace the empirical measure with $q_{\psi_{T_{k-1}}}(\theta)\dd \theta$.
 Since a direct computation shows that $ \int_\bbS (f'(\theta)_{\psi, 0})^2 q_\psi(\theta) \dd \theta =1$, \eqref{eq:JSc1} follows.

For what concerns  \eqref{eq:JSc2} we remark that, by the Markov inequality, it suffices to show that
\begin{equation}
\label{eq:JSc2.1}
\lim_{N \to \infty}  \sum_{k \in \bbN:\, T_k \le \tau N} \bbE \left[\left(\gD M_{N, T_k}\right)^4  \, \Big \vert\,  \cF _{T_{k-1}}
\right]\, =\, 0\,  .
\end{equation}
Actually one can show that there exists a non random constant $C$ such that almost surely
\begin{equation}
\label{eq:JSc2.2}
\bbE \left[\left(\gD M_{N, T_k}\right)^4  \, \Big \vert\,  \cF _{T_{k-1}}
\right]\, \le \, C \left( \frac T N \right)^2\, .
\end{equation}
 This 
 is an immediate consequence of \eqref{eq:estcp1}, but of course, since we are projecting
on $q'$ and since we are just considering the fourth moment, a similar  estimate can be easily obtained
explicitly by proceeding like for \eqref{eq:JSc1} and by using the fact that $\Vert f'_{\psi, 0}\Vert_\infty=
\Vert f'_{0}\Vert_\infty< \infty$. %OBSERVATION : avec cette methode on obtient quelque chose de moins propre. 
Of course \eqref{eq:JSc2.1} follows from \eqref{eq:JSc2.2}.

Therefore $M_{N, \cdot}\in D([0, \tau_f]; \bbR)$ converges in law to $W_\cdot / \Vert q' \Vert_{-1,1/q}$, where $W_\cdot$
is a standard Brownian motion. 
This is almost the result we want (recall that $D_K=1/ \Vert q' \Vert_{-1,1/q}$), since we $M_{N, \cdot}/D_K$
differs from $W_{N, \cdot}$ just for the fact that they interpolate in a different way between the times $T_k$
(where the coincide) and that in the case of $W_{N, \cdot}$ the convergence is in $C^0([0, \tau_f]; \bbR)$.
But \eqref{eq:JSc2.2} guarantees that the sum of the fourth power of the jumps of $M_{N, \cdot}$ adds up to 
$O(T^2/N)=o(1)$ in probability, so the supremum of the jumps is $o(1)$,  and therefore the convergence for  $M_{N, \cdot}\in D([0, \tau_f]; \bbR)$ 
implies the convergence of $W_{N, \cdot}\in C^0([0, \tau_f]; \bbR)$. The proof of
Theorem~\ref{th:main} is therefore complete. 
\qed

\appendix

\section{The evolution  in $H_{-1}$}
\label{sec:A_H}
In what follows we fix $q$ in the invariant manifold $M$ (see \eqref{eq:M}). Unlike the rest of the paper
here we do not identify $q$ with $q_\psi$ and then with $\psi$, so in particular we write 
$L_q$ (and not $L_\psi$), $\cG^q_t(\cdot)$   (and not $\cG^\psi_t(\cdot)$ like in \eqref{eq:Z1}), and so on.
We work with the signed measure
\begin{equation}
\nu_{N,t}(\dd \theta)\, :=\,   \mu_{N, t}(\dd \theta) - q(\theta) \dd \theta \, ,
\end{equation}
which can be seen as an element of $H_{-1}$. This is simply because it is the difference of
two probability measures. 
In fact, if $\mu\in \cM_1$, $\theta \mapsto \mu ([0, \theta])$ is a primitive  of $\mu$ and, by Remark~\ref{rem:norecenter}, $\Vert \mu - \nu \Vert_{-1}^2 \le \int_\bbS (\mu ([0, \theta])-\nu ([0, \theta]))^2 \dd \theta \le 2 \pi$. Therefore  $\Vert \mu- \nu\Vert_{-1}\le \sqrt{2 \pi}$: of course this quick argument needs to be cleaned up by first {\sl smoothing} the measures. 
That is, we introduce an approximate identity $\phi_n \in C^\infty$ ($\phi_n \ge 0$, $\phi_n (\theta)= 0$ for $\theta \in [1/n , 2\pi -1/n]$, $\int_\bbS \phi_n  =1$ and $\lim_n \int _\bbS F\phi_n =F(0)$ for every $F\in C^0$). We then introduce the probability density  $\theta \mapsto \mu_n( \theta):= \int_\bbS \phi_n (\theta- \theta') \mu( \dd \theta')$ and verify that
 \begin{equation} 
 \Vert \mu_n -\mu_m\Vert_{-1}^2 \, \le\, \frac 4 {\min(n,m)}\, ,  \end{equation} 
so that $\lim_n\mu_n$ exists in $H_{-1}$ (of course the limit exists also weakly and it is $\mu$). 

\medskip

We aim at proving:
\begin{proposition}
\label{th:H-1evol}
If $\{\gp_t^{j,N}\}_{t \ge 0, \, j=1, \ldots, N}$ solves \eqref{eq:evol} then $\nu_{N, \cdot}\in C^0([0, \infty); H_{-1})$ and we have
\begin{equation}
\label{eq:mild}
\nu_{N,t} \, =\, \exp(tL_q) \nu_{N,0}- \int_0^t \exp( (t-s) L_q)\partial \left( (J* \nu_{N,s}) \nu_{N,s} \right) \dd s +  
Z_{N,t}\, ,
\end{equation}
where $Z_{N,t}$ is the limit in $H_{-1}$ as $\tau \nearrow t$ of $Z_{N, t, \tau}$, where
\begin{equation}
\label{eq:GtauA}
Z_{N,t, \tau }(\theta)\, :=\, 
\frac 1N \sum_{j=1}^N \int_0^\tau
% \sum_{l=0}^\infty \exp(-(t-s) \gl_l )e_l (\theta) f'_l \left( \gp^{j,N}_s\right)
\partial_{\theta'}\cG_{t-s}^q (\theta,\gp^{j,N}_s)
\dd W^j_s\, 
\end{equation}
Moreover all the terms appearing in the right-hand side of \eqref{eq:mild}, as functions of time, are in $C^0([0, \infty); H_{-1})$.
\end{proposition}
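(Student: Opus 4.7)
The plan is to derive the mild formulation by applying It\^o's formula to the empirical measure tested against smooth test functions, then use a duality argument to pass to the $H_{-1}$-valued identity \eqref{eq:mild}. First, I would introduce the mollifiers $\phi_n$ from the preamble of the Appendix to regularize $\mu_{N,t}$, obtaining the smooth density $\mu_{N,t}^{(n)}(\theta) := \int_\bbS \phi_n(\theta-\theta')\mu_{N,t}(\dd \theta')$. For a test function $h \in C^2(\bbS;\bbR)$ with $\int_\bbS h = 0$, applying It\^o's formula to $\langle \mu_{N,t}, h\rangle = \frac{1}{N}\sum_j h(\gp_t^{j,N})$ yields
\begin{equation}
\langle \mu_{N,t}, h\rangle = \langle \mu_{N,0},h\rangle + \int_0^t \left[ \tfrac12 \langle \mu_{N,s}, h''\rangle + \langle (J*\mu_{N,s})\mu_{N,s}, h'\rangle \right] \dd s + \frac1N \sum_{j=1}^N \int_0^t h'(\gp_s^{j,N}) \dd W^j_s.
\end{equation}
Subtracting the identity $\partial_t \langle q, h\rangle = 0$ (since $q$ is stationary for \eqref{eq:K}) and rewriting the drift in terms of $L_q u = \frac12 u'' - [(J*q)u + (J*u)q]'$ applied in the dual sense, one recognizes $\nu_{N,t}$ as a weak solution to $\partial_t \nu = L_q \nu - \partial((J*\nu)\nu) + \dot Z$.

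Next, I would invoke Duhamel's formula. Since $L_q$ generates an analytic semigroup on $H_{-1,1/q}$ (equivalently on $H_{-1}$, by norm equivalence), with the standard smoothing estimates recalled in Lemma~\ref{th:add1}, one arrives at \eqref{eq:mild} provided the noise term is well defined. Concretely, the stochastic convolution should satisfy, for every test function $h$ in the predual,
\begin{equation}
\langle Z_{N,t}, h\rangle = \frac1N \sum_{j=1}^N \int_0^t (e^{(t-s) L_q^*}h)'(\gp_s^{j,N}) \dd W^j_s,
\end{equation}
which in terms of the kernel $\cG_s^q(\theta,\theta')=\sum_l e^{-s\gl_l} e_l(\theta) f_l(\theta')$ formally corresponds to \eqref{eq:GtauA}. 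The regularity for $s<t$ is harmless, so $Z_{N,t,\tau}$ is unambiguously an element of $H_{-1}$ for $\tau<t$, and the whole identity \eqref{eq:mild} holds with $Z_{N,t,\tau}$ in place of $Z_{N,t}$ provided one also truncates the other terms consistently.

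The main obstacle is to pass to the limit $\tau\nearrow t$ in $H_{-1}$: the kernel $\partial_{\theta'}\cG^q_{t-s}$ has an $(t-s)^{-1/2}$ singularity, so one must show that $\{Z_{N,t,\tau}\}_\tau$ is Cauchy in $H_{-1}$. I would compute, using the It\^o isometry and the eigenfunction expansion, that for $\tau<\tau'<t$,
\begin{equation}
\bbE\left[ \Vert Z_{N,t,\tau'} - Z_{N,t,\tau}\Vert_{-1,1/q}^2 \right] \leq \frac{1}{N} \sum_{l=0}^\infty \int_\tau^{\tau'} e^{-2\gl_l (t-s)} \Vert f'_l\Vert_\infty^2 \dd s,
\end{equation}
and invoke Proposition~\ref{prop:eigenvalues and eigenfunction expansion} plus Corollary~\ref{cor:fj} (so that $\gl_l \geq l^2/C$ and $\Vert f'_l\Vert_\infty$ grows at most polynomially in $l$) to bound this by $C\sum_l (t-\tau)^{1/2}l^{-1-\alpha}$ for some $\alpha>0$, which vanishes as $\tau,\tau'\nearrow t$; completeness then defines $Z_{N,t}$ unambiguously. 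One can strengthen to higher moments via the same device as in the proof of Lemma~\ref{lem:bound Zk} and thereby obtain continuity in $t$. Finally, continuity of the two deterministic terms in \eqref{eq:mild} follows from the strong continuity of $e^{tL_q}$ on $H_{-1}$ together with the bound from Lemma~\ref{lem:H-1 H-2} applied to the nonlinearity $\partial((J*\nu_{N,s})\nu_{N,s})$, which is uniformly bounded in $H_{-2}$ on bounded time intervals because $\nu_{N,s}$ is bounded in $H_{-1}$.
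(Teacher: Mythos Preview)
Your approach is essentially the paper's: derive the weak formulation via It\^o's formula on test functions, pass to the mild form by duality with the time-evolved test function $e^{(t-s)L_q^*}h$, and control the stochastic convolution through moment estimates based on the eigenfunction expansion of the kernel. Two small corrections to your Cauchy estimate: Corollary~\ref{cor:fj} gives $\sup_l \Vert f'_l\Vert_\infty < \infty$ (bounded, not polynomial growth), and the resulting bound on $\bbE\Vert Z_{N,t,\tau'}-Z_{N,t,\tau}\Vert_{-1}^2$ is $C(\tau'-\tau)^{1/2}$ for $\tau'-\tau$ small (obtained by splitting the sum over $l$ at $l\sim (\tau'-\tau)^{-1/2}$, exactly as in the $h_1$ estimate of Lemma~\ref{lem:bound Zk}), rather than the factored form you wrote; the conclusion is unchanged.
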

\medskip

\noindent
{\it Proof.}
For $(t, \theta) \mapsto F_t(\theta)$ in $C^{1,2} (\bbR^+\times\bbS; \bbR)$, from
\eqref{eq:evol} we directly obtain 
\begin{multline}
\label{eq:weakform}
\int_\bbS F_t( \theta) \nu_{N,t}(\dd \theta)\, =\, 
\int_\bbS F_0(\theta) \nu_{N,0}(\dd \theta)
+ \int_0^t \int_\bbS \left( L_q^* F_s\right) (\theta) \nu_{N,s}(\dd \theta) \dd s \\
+ \int_0^t \int_\bbS \partial_s F_s ( \theta)  \nu_{N,s}(\dd \theta) \dd s
+  \int_0^t \int_\bbS \partial_\theta F_s ( \theta)  
(J*\nu_{N,s}) (\theta)
\nu_{N,s}(\dd \theta) \dd s + Z^F_{N,t}\, ,
\end{multline}
where
\begin{equation}
\label{eq:weakformZ}
Z^F_{N,t}\, =\, 
\frac 1N \int_0^t \sum_{j=1}^N \partial_{\theta} F_s(\theta)\Big\vert_{\theta= \gp_s^{j,N}}
\dd W^j_s\, ,
\end{equation}
and $L_q^*$ is the adjoint in $\bbL^2_0$ of $L_q$, that is
\begin{equation}
L_q^* v\, =\, \frac 12 v'' + (J*q) v'- J* (qv')- \int_\bbS \, (J*q) v'\, ,
\end{equation}
 for $v \in C^2(\bbS; \bbR)$
such that $\int_ \bbS v =0$

We  sum up here some useful  properties of $L_q^*$:
\medskip
\begin{enumerate}
\item In \cite{cf:BGP} it is shown that the $\bbL_0^2$-norm is equivalent to the Dirichlet form norm of $L_q$:
the squared Dirichlet form norm of $u$ is
$\Vert u\Vert ^2_{-1,1/q} + \left( u, (-L_q) u \right)_{-1,1/q}$. On the other hand it is straightforward to see
that the properties of $L_q$ in $H_{-1,1/q}$, notably the fact 
 that it is self-adjoint and that it has compact resolvent, still hold true in the space of the Dirichlet form. 
 So  $L_q$ has compact resolvent in $\bbL^2_0$, which directly implies that $L_q^*$ has compact resolvent
and the very same spectrum (see e.g. \cite[VI.5]{cf:RS1}).
\item Recall that we denote by $\{e_j\}_{j=0,1, \ldots}$ a complete set of eigenvectors of $L_q$ which is orthonormal in $H_{-1,1/q}$ and observe that there is a unique solution $f_j$ to
\begin{equation}
\label{eq:link e and f}
{\mathtt A}_q f_j (\theta)\, :=\, 
- \partial_\theta \left( q(\theta) \partial_\theta f_j (\theta) \right) \, =\, e_j (\theta)\, ,
\end{equation}
such that $\int_\bbS f_j =0$. 
More generally,
${\mathtt A}_q$ is a bijection from  $\{u\in C^\infty: \, \int_\bbS u=0\}$ to itself: in fact, $v ={\mathtt A}_q u$ is equivalent to $u'= -\cV/q$ in our standard notations, which determines $u$ since $\int_\bbS u=0$. 
In particular
$f'_j= - \cE_j/ q$ and  $f_j \in C^\infty$, since $e_j$ is $C^\infty$, and one obtains
\begin{equation}
\label{eq:A.ort}
\left( f_i , e_j\right)_2\, =\int_\bbS f_i e_j \, =\, -\int_\bbS f_i' \cE_j \, =\, \int_\bbS \frac{\cE_i \cE_j}{q}\, =\, \gd_{i,j}\, .
\end{equation}
 By using the fact that $q(\cdot)$ is even, one verifies directly also  that if $e_j$ is even (respectively, odd)  -- recall from Section~\ref{sec:linear}
that $e_j$ is either even or odd -- the $f_j$ is even  (respectively, odd) too.
\item By observing also that   $L_q{\mathtt A}_q  = {\mathtt A}_q L_q^*$ one verifies that
$\{ f_j \}_{j=0,1, \ldots}$ is a complete set of eigenfunctions for $L_q^*$ and, of course,
$L_q^* f_j = -\gl_j f_j$.
\end{enumerate}

\medskip

Therefore for every $t>0$ and $s\le t$ we can define $F_s(\theta)=(\exp((t-s) L_q^* ) F) (\theta)$ for $F\in \bbL^2_0$
and standard parabolic regularity \cite{cf:Friedman} results imply that $F_s(\cdot)$ is $C^\infty$ for
$s<t$ (in our case this can be proven directly by using the Fourier transform, like in \cite{cf:GPP}, but for what follows we choose $F\in C^2$  and the regularity result
is even more straightforward). By plugging this choice into \eqref{eq:weakform}
we obtain
\begin{multline}
\label{eq:fromweakform}
\int_\bbS F( \theta) \nu_{N,t}(\dd \theta)\, =\, 
\int_\bbS (\exp(t L_q^* ) F) (\theta) \nu_{N,0}(\dd \theta)
\\  
+  \int_0^t \int_\bbS \partial_\theta (\exp((t-s) L_q^* ) F) (\theta)  
(J*\nu_{N,s}) (\theta)
\nu_{N,s}(\dd \theta) \dd s + Z^F_{N,t}\, .
\end{multline}

\medskip
 
At this point we step to  looking at  $\nu_{N,t}$  as an element
of $H_{-1}$ and we reconsider \eqref{eq:fromweakform} with this novel viewpoint.

First of all $\int_\bbS F( \theta) \nu_{N,t}(\dd \theta)= \langle F, \nu_{N,t}\rangle _{1,-1}$,
where $\langle \, \cdot\, ,  \, \cdot\,\rangle _{1,-1}$ is the duality between $H_1$
and $H_{-1}$ (cf. Sec.~\ref{sec:linear}). For the first term in the right-hand side we 
observe that, for $v\in H_{-1}$ we have $\langle \exp(t L_q^* ) F, v\rangle_{1,-1}=
\langle F, \exp(t L_q) v\rangle_{1,-1}$: this is because this relation holds 
when $v \in \bbL^2_0$ (in this case the duality can be replaced by the $\bbL^2$ scalar product)
and because one can choose a sequence  $\{v_n\}_{n=1,2, \ldots}$, $v_n\in  \bbL^2_0$ 
such that $v_n \to v$ 
 in $H_{-1}$ (one can choose $v_n=\phi_n *v$) 
 so that
 \begin{equation}
 \label{eq:stepA.1}
 \langle \exp(t L_q^* ) F, v\rangle_{1,-1}\, =\, \lim_n ( F,  \exp(t L_q)v_n)_2
 \, =\, \langle F,  \exp(t L_q)v\rangle_{1,-1}\, ,
 \end{equation}
 where we have used the continuity properties of the duality and of the semigroup operator.
 
 For the second term in the right-hand side of \eqref{eq:fromweakform} 
 %we use the rigged Hilbert space structure $H_2\subset \bbL^2_0 \subset H_{-2}$,
 %where $H_2$ is the closure of $\{u \in C^2(\bbS; \bbR):\, \int _\bbS u =0\}$
 %with respect to the Hilbert norm $\int_\bbS (u'')^2$.
we write
 \begin{multline}
  \int_0^t \int_\bbS \partial_\theta (\exp((t-s) L_q^* ) F) (\theta)  
(J*\nu_{N,s}) (\theta)
\nu_{N,s}(\dd \theta) \dd s\, =\\
  \int_0^t 
\langle (J*\nu_{N,s})\partial \exp((t-s)L^*_q)F, \nu_{N,s} \rangle_{1,-1} \dd s\, ,
 \end{multline} 
We now introduce $v_{n,s} := \phi_n*\nu_{N,s}$
%, with $\phi_n$ a smooth approximate identity 
so that for every $s\in [0,t)$
\begin{equation}
\label{eq:A2.1}
\begin{split}
\langle (J*\nu_{N,s})\partial \exp((t-s) L^*_q)F, \nu_{N,s} \rangle_{1,-1}
\, &=\,- \lim_n \left( F, \exp((t-s) L_q) \partial((J*\nu_{N,s}) v_{n,s})\right)_2\\ 
&=\, -
 \langle F, \exp((t-s) L_q) \partial ((J*\nu_{N,s})\nu_{N,s})\rangle_{1,-1} ,
\end{split}
\end{equation}
where in the last step we have used the fact that $\exp((t-s) L_q)$ is a continuous 
operator from
$H_{-2}$ to $H_{-1}$ (Lemma~\ref{th:add1}). Notice moreover that we have
\begin{equation}
 \vert \left( F, \exp((t-s) L_q) \partial((J*\nu_{N,s}) v_{n,s})\right)_2\vert 
\le \, \Vert \phi_n \Vert_1
\Vert \partial ((J*\nu_{N,s})\partial \exp((t-s)L_q^*)F)\Vert_2 \Vert \nu_{N,s}\Vert_{-1}\, ,
\end{equation}
and, since   $J(\cdot)=-K \sin(\cdot)$,
 one sees that this expression is bounded by a constant times $\Vert F''\Vert_2$, uniformly in 
 $n$ and $s \le t$. Such a bound tells us that one can exchange limit and integration in 
\begin{equation}
\int_0^t \lim_n
  \Big( F, \exp((t-s) L_q) \partial((J*\nu_{N,s}) v_{n,s})\Big)_2 \dd s 
 \, ,
\end{equation}
and then, for fixed $n$ one can of course exchange integral in $\dd s$
and integral in $\dd \theta$. At this point we appeal again to Lemma~\ref{th:add1} 
that guarantees that  $\int_0^t \exp((t-s) L_q)\partial ((J*\nu_{N,s}) v_{n,s}) \dd s $ converges, in
$H_{-1}$, to  $\int_0^t \exp((t-s) L_q)\partial ((J*\nu_{N,s}) \nu_{N,s}) \dd s $:
note in fact that $\Vert  \partial((J*\nu_{N,s}) v)\Vert _{-2} \le c_J \Vert v \Vert_{-1}$  
 so that (by Lemma~\ref{th:add1} )
\begin{multline}
\left \Vert \int_0^t
\exp((t-s) L_q) \partial((J*\nu_{N,s}) (v_{n,s}-v_{n',s}) \dd s \right \Vert_{-1}\, \le \\
c_J C \int_0^t  \left(1+\frac{1}{\sqrt{t-s}}\right)\Vert v_{n,s}- v_{n',s}\Vert_{-1} \dd s\, ,
\end{multline}
and the right-hand side vanishes for $\min(n,n') \to \infty$.
Therefore we obtain
\begin{multline}
\int_0^t \int_\bbS \partial_\theta (\exp((t-s) L_q^* ) F) (\theta)  
(J*\nu_{N,s}) (\theta)
\nu_{N,s}(\dd \theta) \dd s\, =\\
  \langle F, \int_0^t \exp((t-s) L_q) \partial((J*\nu_{N,s}) \nu_{N,s}) \dd s\rangle_{1,-1} \, .
\end{multline}

We are left with the last term in \eqref{eq:fromweakform}. 
It is now useful to use the kernel of the $L_q$-semigroup in $\bbL^2_0$ 
\begin{equation}
\label{eq:Gkernel}
\cG^q_s(\theta, \theta')\, :=\,  \sum_{l=0}^\infty \exp(-s\gl_l) e_l(\theta) f_l (\theta')\, ,
\end{equation}
so that
\begin{equation}
\label{eq:Luv}
\left( u, \exp(s L_q )v \right)_2\, =\, \left( \exp(s L_q^*)  u,  v \right)_2\, =\,
\int_{\bbS} \int_{\bbS}u(\theta) \cG^q_s(\theta, \theta') v (\theta')\dd \theta \dd \theta' \, .
\end{equation}
Note also that, for $s>0$, $\cG_s^q$ is $C^\infty$ in both variables, by the standard parabolic regularity results
we have mentioned above.
So, for every $\tau<t$, $\theta \mapsto Z_{N,t, \tau }(\theta)$ (recall \eqref{eq:GtauA})
is well defined and smooth in $\theta$. But  Lemma~\ref{th:add2}  
tells us that $\lim_{\tau \nearrow t} Z_{N,t, \tau })$ exists in $H_{-1}$.
If we call the limit $Z_{N,t })$ we directly see that (recall \eqref{eq:weakformZ})
\begin{equation}
Z^F_{N,t}\, =\, \langle F , Z_{N,t }\rangle _{1,-1}\, .
\end{equation}
 Therefore we have shown that
  \eqref{eq:fromweakform} implies the validity 
  of \eqref{eq:mild}  if we take the duality with respect to an arbitrary $F\in C^2$.
  But we have also shown that every term in \eqref{eq:mild} is in $H_{-1}$, therefore
 the equation extends to $F \in H_1$ and
 \eqref{eq:mild} is proven. 
 
 The continuity claimed in the statement follows by the continuity of the three terms
 in the right-hand side of \eqref{eq:mild}. The continuity of the first term is immediate from
 the properties of the semigroup. The continuity of the second term follows from a direct estimate by applying 
 both bounds in Lemma~\ref{th:add1}. Finally the continuity of the third term is claimed in Lemma~\ref{th:add2}.
 The proof of Proposition~\ref{th:H-1evol} is therefore complete.  
 \qed
 
\medskip

\begin{lemma}
\label{th:add1}
For $\tau>0$ the operator $\exp(\tau L_q)$ extends to a bounded operator
from $H_{-2}$ to $H_{-1}$ and 
there exists $C>0$ such that for every $\tau>0$
\begin{equation}
\label{eq: bound semi group H-2}
\left\Vert \exp(\tau L_q) u \right \Vert_{-1} \, \le \, C\left(1+\frac{1}{\sqrt{\tau}}\right) \Vert u \Vert_{-2}\, ,
\end{equation}
and such that for every $\gep\in (0,1/2)$ we have
\begin{equation}
\left\Vert \exp((\tau+\gd) L_q) u - \exp(\tau L_q) u \right \Vert_{-1} \, \le \, C \gd^\gep \left( 1+ \frac1{\tau^{\gep +1/2}}\right)
 \Vert u \Vert_{-2}\, ,
\end{equation}
for every $\tau>0$ and $\gd\ge0$.
\end{lemma}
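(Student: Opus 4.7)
The plan is to combine a Duhamel perturbation expansion around the heat semigroup with an analytic-semigroup estimate that follows from the spectral decomposition of $L_q$ in $H_{-1,1/q}$. Write $L_q=\tfrac12\partial^2+A_q$ with $A_q u:=-\partial(uJ*q+qJ*u)$; since $J*q\in C^\infty(\bbS)$ and, for $u\in H_{-1}$, the convolution $J*u$ is a smooth function with $\Vert J*u\Vert_\infty\leq C\Vert u\Vert_{-1}$ (by duality, since $J$ is smooth), both $u(J*q)$ and $q(J*u)$ lie in $H_{-1}$ with norm controlled by $\Vert u\Vert_{-1}$, so $A_q$ sends $H_{-1}$ boundedly into $H_{-2}$. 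I also record the standard smoothing bound for the heat semigroup $P_t:=e^{t\partial^2/2}$: using the Fourier representations $\Vert P_t u\Vert_{-1}^2\asymp\sum_k e^{-tk^2}|\hat u_k|^2/k^2$ and $\Vert u\Vert_{-2}^2\asymp\sum_k |\hat u_k|^2/k^4$, the bound $\Vert P_t u\Vert_{-1}\leq C(1+t^{-1/2})\Vert u\Vert_{-2}$ reduces to the elementary inequality $\sup_{k\geq 1}k^2 e^{-tk^2}\leq C(1+1/t)$.

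For the first estimate \eqref{eq: bound semi group H-2}, invoke Duhamel's formula
\begin{equation*}
e^{\tau L_q}u\, =\, P_\tau u+\int_0^\tau P_{\tau-s}A_q e^{sL_q}u\,\dd s,
\end{equation*}
set $\phi(\tau):=\Vert e^{\tau L_q}u\Vert_{-1}$, and combine the two ingredients from the previous paragraph to obtain the integral inequality
\begin{equation*}
\phi(\tau)\, \leq\, C(1+\tau^{-1/2})\Vert u\Vert_{-2}+C\int_0^\tau(1+(\tau-s)^{-1/2})\phi(s)\,\dd s.
\end{equation*}
The singular Gronwall--Henry inequality (see \cite{cf:SellYou}) then yields $\phi(\tau)\leq C(1+\tau^{-1/2})\Vert u\Vert_{-2}$ for $\tau\leq 1$. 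For $\tau\geq 1$ the fact that $e^{tL_q}$ is a contraction on $H_{-1,1/q}$ (the spectrum of $-L_q$ lies in $[0,\infty)$) combined with norm equivalence gives $\phi(\tau)\leq C\phi(1)\leq C\Vert u\Vert_{-2}$, completing \eqref{eq: bound semi group H-2} first for $u\in H_{-1}$ and then, by density, for all $u\in H_{-2}$.

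For the time-increment bound, start from the elementary spectral estimate $\Vert(-L_q)e^{sL_q}\Vert_{H_{-1,1/q}\to H_{-1,1/q}}\leq 1/(es)$, which follows from $\sup_{\gl\geq 0}\gl e^{-s\gl}=1/(es)$ and the eigenfunction expansion of $L_q$. Applying this to the second factor of $e^{sL_q}=e^{(s/2)L_q}e^{(s/2)L_q}$ and composing with \eqref{eq: bound semi group H-2} produces $\Vert L_q e^{sL_q}u\Vert_{-1}\leq C(1+s^{-3/2})\Vert u\Vert_{-2}$, hence
\begin{equation*}
\Vert e^{(\tau+\gd)L_q}u-e^{\tau L_q}u\Vert_{-1}\, \leq\, \int_\tau^{\tau+\gd}\Vert L_q e^{sL_q}u\Vert_{-1}\,\dd s\, \leq\, C\gd(1+\tau^{-3/2})\Vert u\Vert_{-2}.
\end{equation*}
The triangle inequality and \eqref{eq: bound semi group H-2} give the complementary crude bound $C(1+\tau^{-1/2})\Vert u\Vert_{-2}$. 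Writing $a=a^\gep a^{1-\gep}$ and interpolating (the $\gep$-th power of the first bound against the $(1-\gep)$-th power of the second) produces the prefactor $\gd^\gep(1+\tau^{-3/2})^\gep(1+\tau^{-1/2})^{1-\gep}$, and a direct check on the ranges $\tau\leq 1$ and $\tau\geq 1$ yields $(1+\tau^{-3/2})^\gep(1+\tau^{-1/2})^{1-\gep}\leq C(1+\tau^{-\gep-1/2})$, which is the asserted form.

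The main obstacle is the singular Gronwall step: the integral inequality for $\phi$ has $\phi$ itself on the right against the integrable but singular kernel $(\tau-s)^{-1/2}$, so the elementary Gronwall lemma is not sufficient and one must invoke its generalized (Henry) version. Once this has been set up and the spectral estimate $\Vert(-L_q)e^{sL_q}\Vert\leq C/s$ is verified, the large-$\tau$ regime and the entire time-increment bound reduce to routine calculus.
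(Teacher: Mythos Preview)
Your argument is correct, but it takes a genuinely different route from the paper. The paper works entirely by spectral calculus: it introduces the interpolation norm $\Vert u\Vert_{V^{-1}}^2=\sum_k(1+\gl_k)^{-1}u_k^2$ in the eigenbasis $\{e_k\}$ of $L_q$, invokes the equivalence $\Vert\cdot\Vert_{V^{-1}}\asymp\Vert\cdot\Vert_{-2}$ (which rests on the eigenvalue asymptotics $\gl_j\asymp j^2$ from Appendix~\ref{sec:appB}), and then both inequalities reduce to the elementary bounds $\sup_{y\geq 0}(1+y)e^{-2y\tau}\leq C(1+\tau^{-1/2})^2$ and $(1-e^{-x})\leq x^\gep$, $(1+x)x^{2\gep}e^{-2x\tau}\leq C(1+\tau^{-\gep-1/2})^2$. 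You instead perturb off the heat semigroup via Duhamel and the singular Gronwall--Henry inequality for the first estimate, and for the second you interpolate between the analytic-semigroup derivative bound and the crude triangle-inequality bound. Your approach only uses self-adjointness and non-positivity of the spectrum of $L_q$, so it sidesteps the need for precise eigenvalue asymptotics; the price is the Gronwall--Henry machinery and a few more lines of calculus. The paper's route is shorter once the norm equivalence $V^{-1}\cong H_{-2}$ is available, and has the appeal that both estimates drop out of the same one-line spectral expansion.
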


\medskip

\begin{proof}
We introduce the interpolation spaces associated to $L_q$ that is the (Hilbert) spaces
\begin{equation}
 V^m\, :=\, \left\{u=\sum_{k=0}^\infty u_k e_k, \quad \sum_{k=0}^\infty (1+\gl_k)^m u_k^2<\infty\right\}\, , 
\end{equation}
associated with the norms
\begin{equation}
\label{eq:def norm V}
 \Vert u\Vert^2_{V^m}\, :=\, \Vert (1-L_q)^{m/2} u\Vert^2_{-1,1/q}\, = \, \sum_{k=0}^\infty (1+\gl_k)^m u_k^2\, .
\end{equation}
It is proven in \cite[Remark A.1]{cf:GPPP}  that the norms $\Vert .\Vert_{V^n}$ and $\Vert .\Vert_{n-1}$ are equivalent. This equivalence can also be deduced  from Remark \ref{rem:eigenvalues and eigenfunction expansion}. In particular 
$\Vert .\Vert_{V^{-1}}$ and $\Vert .\Vert_{-2}$ are equivalent, so we will prove \eqref{th:add1} with $\Vert .\Vert_{-1,1/q}$ and $\Vert .\Vert_{V^{-1}}$.
For all $u=\sum_{k=0}^\infty u_k e_k$, we extend $e^{\tau L_q}u$ as
\begin{equation}
 e^{\tau L_q}u\, =\, \sum_{k=0}^\infty e^{-\gl_k \tau} u_k e_k\, ,
\end{equation}
and  we deduce
\begin{equation}
\label{eq:formule semi groupe H-2}
\Vert  e^{\tau L_q}u \Vert^2_{-1,1/q}\, =\, \sum_{k=0}^\infty (1+\gl_k)e^{-2\gl_k \tau} \frac{u_k^2}{1+\gl_k}\, .
\end{equation}
But if we define $f(y):=(1+y)e^{-2 y \tau}$, it is easy to see that for all $y\geq 0$, there exist $C$ such that $f(y)\leq C^{2}\left(1+\frac{1}{\sqrt{\tau}}\right)^{2}$, which with \eqref{eq:def norm V}.
and \eqref{eq:formule semi groupe H-2} gives the first inequality.

For the second inequality we make a similar spectral decomposition and we obtain
\begin{equation}
\label{eq:spdecp2}
\Vert  e^{(\tau+\gd) L_q}u-  e^{\tau L_q}u \Vert^2_{-1,1/q}\, 
=\, \sum_{k=0}^\infty (1+\gl_k)e^{-2\gl_k \tau}
\left(1- \exp( -\gd \gl_k)\right)^2
 \frac{u_k^2}{1+\gl_k}\, .
\end{equation}
We then use $(1-\exp(-x)) \le x^\gep$ for $x\ge 0$ and $(1+x) x^{2\gep}\exp(-x\tau)
\le C^2 (1+\tau^{-\gep-1/2})^2$, for a suitable $C$ which can be chosen independent
of $\gep \in (0/1/2)$. 
\end{proof}

\medskip

\begin{lemma}
\label{lem:H-1 H-2}
For all $u,v\in H_{-1}$, there exists $C>0$ such that
\begin{equation}
 \Vert \partial_\theta(uJ*v)\Vert_{H_{-2}}\, \leq \, C \Vert u\Vert_{H_{-1}}\Vert v\Vert_{H_{-1}}\, .
\end{equation}

\end{lemma}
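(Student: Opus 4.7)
The plan is to exploit the very specific structure $J(\cdot) = -K\sin(\cdot)$. Using the identity $\sin(\theta-\theta')=\sin\theta\cos\theta'-\cos\theta\sin\theta'$, the convolution takes the explicit form
\begin{equation}
(J*v)(\theta)\, =\, -K\sin\theta\cdot\langle v,\cos\rangle+K\cos\theta\cdot\langle v,\sin\rangle,
\end{equation}
where the pairings are the $H_{-1}$--$H_1$ duality. Since $\sin,\cos\in H_1\cap\bbL^2_0$, one has $|\langle v,\sin\rangle|+|\langle v,\cos\rangle|\leq C\|v\|_{-1}$. Hence $J*v$ lies in the two-dimensional span of $\{\sin,\cos\}$ and satisfies $\|J*v\|_{C^k(\bbS)}\leq C_k\|v\|_{-1}$ for every $k$.

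Next I would reduce to an $H_{-1}$ estimate by the standard ``loss of one derivative'' principle: for any mean-zero distribution $w$ and any $\phi$ in the space dual to $H_{-2}$, $|\langle\partial w,\phi\rangle|=|\langle w,\phi'\rangle|\leq\|w\|_{-1}\|\phi'\|_{H_1}\leq\|w\|_{-1}\|\phi\|_{H_2}$, so $\|\partial w\|_{-2}\leq\|w\|_{-1}$. Applied to $w=u\cdot(J*v)$ (which is automatically mean zero once paired against $\phi'$), this reduces the task to bounding $\|u\cdot(J*v)\|_{-1}$.

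For the latter, fix $h\in H_1$. The product $(J*v)\cdot h$ belongs to $H_1$, with
\begin{equation}
\|(J*v)h\|_{H_1}\,\leq\,\|J*v\|_\infty\|h\|_2+\|(J*v)'\|_\infty\|h\|_2+\|J*v\|_\infty\|h'\|_2\,\leq\, C\|J*v\|_{C^1}\|h\|_{H_1},
\end{equation}
and therefore
\begin{equation}
|\langle u\cdot(J*v),h\rangle|\, =\, |\langle u,(J*v)h\rangle|\,\leq\,\|u\|_{-1}\|(J*v)h\|_{H_1}\,\leq\, C\|u\|_{-1}\|J*v\|_{C^1}\|h\|_{H_1}.
\end{equation}
Combining with the $C^1$ bound from the first paragraph yields $\|u\cdot(J*v)\|_{-1}\leq C\|u\|_{-1}\|v\|_{-1}$, and hence the desired estimate after applying the loss-of-one-derivative reduction.

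I do not expect a real obstacle here: the only technical care is to match the precise definition of $H_{-2}$ (analogous to $H_{-1}$ via the rigged Hilbert space structure with pivot $\bbL^2_0$) and to use the norm equivalence between weighted and unweighted $H_{-s}$-norms freely, as already done throughout the paper. The key structural input is that the trigonometric nature of $J$ forces $J*v$ to live in a fixed two-dimensional subspace of smooth functions with norm controlled by $\|v\|_{-1}$, which is what makes the nonlinearity effectively bilinear in the two lowest Fourier modes.
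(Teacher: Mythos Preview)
Your approach is sound and genuinely different from the paper's, but there is one small gap in the way the duality steps are organized. The intermediate object $w=u\cdot(J*v)$ need not have mean zero (its pairing with $1$ equals $\langle u,J*v\rangle$, which has no reason to vanish), so $w\notin H_{-1}$ in the paper's sense, and likewise the claim ``$(J*v)\cdot h\in H_1$'' fails because $\int_\bbS (J*v)h$ is generally nonzero. The parenthetical ``automatically mean zero once paired against $\phi'$'' shows you are aware of the issue but does not quite resolve it. The fix is simply to merge your two steps: pair $\partial_\theta(u\,J*v)$ directly with $\phi\in H_2$, obtaining $-\langle u,(J*v)\phi'\rangle$; using the primitive representation $u=\cU'$ (Section~\ref{sec:linear}) one has $|\langle u,g\rangle|=|\int_\bbS\cU g'|\leq\|\cU\|_2\|g'\|_2$ for \emph{any} smooth $g$, mean-zero or not, and then your product-rule estimate gives $\|[(J*v)\phi']'\|_2\leq C\|J*v\|_{C^1}\|\phi\|_{H_2}$. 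Everything else, in particular the key observation that $J*v$ is a first-order trigonometric polynomial with $\|J*v\|_{C^1}\leq C\|v\|_{-1}$, is correct.

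For comparison, the paper proves the lemma by a direct Fourier computation: writing $\|u\|_{-s}^2=(2\pi)^{-1}\sum_m |u_m|^2 m^{-2s}$, computing $\partial_\theta(u\,J*v)$ explicitly as a $\pm1$ shift of the Fourier coefficients of $u$ weighted by $v_{\pm1}$, and concluding via an elementary index-shift inequality. Your duality argument is more conceptual and would work verbatim for any smooth mean-zero kernel $J$, whereas the Fourier calculation is specific to $J=-K\sin$ but entirely explicit.
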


\medskip

\begin{proof} For this proof it is practical to write the $H_{-1}$-norms by using the Fourier
coefficients. In fact if $u\in H_{-s}$, here $s=1$ or $s=2$, we can define 
$u_n= \langle u, b_n\rangle$, where $b_n(\theta)= \exp(i n  \theta)/2\pi$ (note that $u_0=0$)
and  $\theta \mapsto \sum_{n \in \bbZ: \vert n \vert \le N} u_n \exp(in \theta)$ converges as
$N \to \infty$ in $H_s$ to $u$.
Moreover we have
\begin{equation}
 \Vert u\Vert_{-s}\, :=\, \left(\frac{1}{2\pi} \sum_{m\in\bbZ} \frac{u_m^{2}}{m^{2s}}\right)^{1/2}\, .
\end{equation}
Since $J(\theta)=-K\sin(\theta)$, a direct calculation gives
\begin{equation}
 \partial_\theta(uJ*v)\, =\, K\pi \left[(m-1)v_{-1}\sum_{m\in\bbZ} e^{i(m-1)\theta}u_m -(m+1)v_1\sum_{m\in\bbZ} e^{i(m+1)\theta}u_m\right]\, ,
\end{equation}
from which we extract
\begin{multline}
 \Vert \partial_\theta(uJ*v)\Vert^2_{-2}\, =\, \frac{K^2\pi}{2} \sum_{m\in\bbZ,m\neq 0} m^{-4}\left| m(v_{-1}u_{m+1}-v_1 u_{m-1})\right|^2 \\
                                           \leq\, K^2\pi \max(|v_{-1}|^2,|v_1|^2)\sum_{m\in\bbZ,m\neq 0} m^{-2}(u_{m-1}^2+u_{m+1}^2) \\
                                           \leq\, 4K^2\pi \Vert v\Vert_{-1}^2 \Vert u\Vert_{-1}^2\, .
\end{multline}

\end{proof}

\medskip

\begin{lemma}
\label{th:add2}
The almost sure  limit of
$ Z_{N, t, \tau}$ as $\tau \nearrow t$ exists in $H_{-1}$ and, if we call the limit point
$Z_{N,t}$, we can choose a continuous version of $Z_{N, \cdot}$, that is $Z_{N, \cdot}\in C^0 ([0, \infty); H_{-1})$. 
\end{lemma}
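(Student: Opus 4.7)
The plan is to first show that, for each fixed $t>0$, the net $\{Z_{N,t,\tau}\}_{\tau\nearrow t}$ is Cauchy in $L^{2m}(\Omega;H_{-1})$ for every integer $m$, then upgrade to almost-sure convergence by Borel--Cantelli, and finally prove joint continuity in $t$ of the resulting process via the Garsia--Rodemich--Rumsey lemma already invoked in Lemma~\ref{th:GRR}. All the hard analytic input is exactly what was used in the proof of Lemma~\ref{lem:bound Zk}; the point here is merely to adapt those estimates to the endpoint $s=t$ and to the time variable $t$ itself.

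First I would exploit the norm equivalence between $\Vert\cdot\Vert_{-1}$ and $\Vert\cdot\Vert_{-1,1/q}$ and use the spectral representation \eqref{eq:Gkernel} of $\cG^q$. For $0<\tau<\tau'<t$ the increment $Z_{N,t,\tau'}-Z_{N,t,\tau}$ is a stochastic integral over $[\tau,\tau']$, and the orthogonality of the $e_l$ in $H_{-1,1/q}$ gives
\begin{equation}
\bbE\bigl[\Vert Z_{N,t,\tau'}-Z_{N,t,\tau}\Vert_{-1,1/q}^{2}\bigr]\,\leq\,\frac{C}{N}\sum_{l=0}^{\infty}\int_{\tau}^{\tau'}e^{-2\gl_l(t-s)}\dd s,
\end{equation}
exactly as in \eqref{eq:int-step4.1-0}--\eqref{eq:int-step4.1}. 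The $l=0$ term contributes $\tau'-\tau$, while for $l\geq 1$ the integral equals $\frac{1}{2\gl_l}\bigl(e^{-2(t-\tau')\gl_l}-e^{-2(t-\tau)\gl_l}\bigr)$, dominated by $1/(2\gl_l)$. Since $\gl_l\gtrsim l^2$ by Proposition~\ref{prop:eigenvalues and eigenfunction expansion}, the series $\sum 1/\gl_l$ converges, so dominated convergence gives a vanishing Cauchy bound as $\tau,\tau'\nearrow t$. The same combinatorial reduction to diagonal terms as in \eqref{eq:devel1}--\eqref{bound:first term noise non perp} extends this to all even moments, yielding $\bbE[\Vert Z_{N,t,\tau'}-Z_{N,t,\tau}\Vert_{-1}^{2m}]\to 0$ with the $m$-th power of the quadratic bound.

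From here, a.s.\ convergence along a sequence $\tau_n\nearrow t$ is standard Borel--Cantelli, and to go from sequential to net convergence one controls $\sup_{\tau\in[\tau_n,\tau_{n+1}]}\Vert Z_{N,t,\tau}-Z_{N,t,\tau_n}\Vert_{-1}$ by the same moment bounds applied to the intermediate martingale, using Doob's inequality. Call the limit $Z_{N,t}$. For continuity in $t$, I would decompose, for $s<t$,
\begin{equation}
Z_{N,t}-Z_{N,s}\,=\,\lim_{\tau\nearrow s}\Bigl[\bigl(Z_{N,t,\tau}-Z_{N,s,\tau}\bigr)+\bigl(Z_{N,t}-Z_{N,t,\tau}\bigr)-\bigl(Z_{N,s}-Z_{N,s,\tau}\bigr)\Bigr],
\end{equation}
and bound the two surviving pieces exactly as in \eqref{eq:decompose}: the first by the analog of \eqref{bound:second term noise} (replacing $e^{-\gl_l(t-u)}$ differences), and the small-increment residual by the Cauchy estimate above. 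This produces $\bbE[\Vert Z_{N,t}-Z_{N,s}\Vert_{-1}^{2m}]\leq C h_2^{m}(|t-s|)/N^{m}$, and Lemma~\ref{th:GRR} with $p(u)=u^{(2+\zeta)/(2m)}$ and $\Psi(u)=u^{2m}$ for $m$ large delivers a continuous modification on any bounded interval, which patches into $C^{0}([0,\infty);H_{-1})$.

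The main obstacle I foresee is not the individual estimates, which are copies of those already developed, but the bookkeeping around the endpoint singularity of $\partial_{\theta'}\cG^{q}_{t-s}$ as $s\to t$: one must check that although $\partial_{\theta'}\cG^{q}_{t-s}(\theta,\theta')$ is not a function for $s=t$, its stochastic integral is well-defined in $H_{-1}$ because the negative-index norm absorbs one derivative and the summability $\sum 1/\gl_l<\infty$ (not available in $\bbL^2$) controls the remaining short-time singularity. This is precisely why we work in $H_{-1}$ rather than in $\bbL^2$, and verifying that the limit is identified by duality with smooth test functions (so that it coincides with the object appearing in the mild formulation \eqref{eq:mild}) is the only step that requires a small amount of care beyond reusing Section~\ref{sec:apriori}'s toolbox.
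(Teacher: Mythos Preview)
Your proposal is correct and follows essentially the same route as the paper, which also recycles the moment estimates from the proof of Lemma~\ref{lem:bound Zk} and concludes via the Garsia--Rodemich--Rumsey lemma. The paper is slightly more direct on the existence of the limit: the same computation you use for Cauchyness in fact yields the quantitative bound $\bbE[\Vert Z_{N,t,\tau}-Z_{N,t,\tau'}\Vert_{-1}^{2m}]\leq C\,h_1^m(|\tau-\tau'|)$, so Kolmogorov (or GRR) gives a.s.\ H\"older continuity of $\tau\mapsto Z_{N,t,\tau}$ on $[0,t)$ and the limit follows without the Borel--Cantelli/Doob detour; also, in your final bound $h_2$ should be $h_1$ since the $l=0$ mode is present, though this is immaterial for the continuity argument.
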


\medskip

\noindent
{\it Proof.} 
The claim follows from the same estimates as the one that we have obtained for the proof of Lemma~\ref{lem:bound Zk},
which are however substantially more precise than what we need here: recall that now $N$ is fixed, while 
in Section~\ref{sec:apriori}  one of the crucial points is to follow the $N$ dependence of the results.
Therefore we will not go through the arguments in detail, but we just point out that
that one goes from $Z_{N, t, \tau}$  to $Z^k_{t, t'}$, see  \eqref{eq:M.1} by making obvious changes. 
So, in particular, proceeding like 
for 
\eqref{bound:first term noise non perp}
we easily gets 
\begin{equation}
\label{eq:forKo}
\bbE\left[ \left \Vert Z_{N, t,\tau}- Z_{N, t,\tau'} \right\Vert_{-1}^{2m} \right] \, \le \, C h_1^m(\vert \tau-\tau'\vert)\, ,
\end{equation}
where $C$ depends on $N$ and $m$ and $ 0 \le \tau, \tau' < t$.
An estimate like \eqref{eq:forKo} implies almost sure H\"older continuity of $Z_{N, t,\cdot}$, by a direct application 
of Kolmogorov continuity Lemma \cite{cf:SV} or by using the 
Garsia-Rodemich-Rumsey Lemma (Lemma~\ref{th:GRR}). That is, there exists 
a (positive) random variable $X$ and a positive constant $c>0$ such that
\begin{equation}
\left \Vert Z_{N, t,\tau}- Z_{N, t,\tau'} \right\Vert_{-1}\, \le \, X \, \vert \tau -\tau'\vert ^c\, ,
\end{equation}
for every $ 0 \le \tau, \tau' < t$. Therefore the almost sure limit of $Z_{N, t,\tau}$, as $\tau\nearrow t$, exists.

The continuity of the limit follows in the same way, this time using also 
\eqref{bound:second term noise}. Actually,  in the proof of Lemma~\ref{lem:bound Zk}
we use Lemma~\ref{th:add2} only to define $Z^k_t$ as  almost sure limit in $H_{-1}$:
the proof of continuity is strictly contained in the argument that starts from \eqref{eq:estcp1}
and goes till the end of that proof (but, once again, that proof is substantially more informative
and involved, since it follows the $N$-dependence).
\qed

\subsection{Second order estimates of the projection}
As anticipated in \S~\ref{sec:heur} our approach requires a control up to and including the second
order for the projection map $p(\cdot)$
(recall \S~\ref{sec:Manif} for the definition). The expansion is with respect to the $H_{-1}$ distance from
the manifold $M$.
\medskip

\begin{lemma}
\label{lem:second order projection}
 For all $q=q_\psi \in M$ and $h\in H_{-1}$ with $\Vert h\Vert_{-1}< \gs$, we have
\begin{equation}
 p(q+h)\, =\, \psi -\frac{( h, q')_{-1, 1/q}}{( q', q')_{-1,1/q}}
 \left(1-\frac{1}{2\pi I_0^2(2Kr)}
 \frac{( h, (\log q)'')_{-1,1/q}}{( q',q')_{-1,1/q}}\right)
 +O(\Vert h\Vert_{-1}^3)\, .
\end{equation}
\end{lemma}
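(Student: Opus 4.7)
The plan is to invoke the implicit characterisation of $p$ from Lemma~\ref{lem:def proj on M}: writing $\alpha := p(q+h) - \psi$, the angle $\alpha$ is the unique small root of
\[
F(\alpha, h)\, :=\, \bigl(q + h - q_{\psi + \alpha},\, q'_{\psi + \alpha}\bigr)_{-1, 1/q_{\psi + \alpha}}\, =\, 0,
\]
and the stated formula is just an explicit Taylor expansion of $\alpha(h)$ up to order two in $\Vert h\Vert_{-1}$. Since the pairings $(\cdot,\cdot)_{-1, 1/q_\psi}$ are invariant under the translation $T_\beta u(\theta) := u(\theta-\beta)$, I would first use rotation invariance to reduce to $\psi = 0$ (so $q_\psi = q$, $q'_\psi = q'$) and then apply $T_{-\alpha}$ simultaneously to all three slots of $F$ in order to freeze the weight and the vector $q'$. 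This produces the decomposition
\[
F(\alpha, h)\, =\, G(\alpha)\, +\, H(\alpha, h),
\]
with $G(\alpha) := (T_{-\alpha}q - q,\, q')_{-1, 1/q}$ and $H(\alpha, h) := (T_{-\alpha}h,\, q')_{-1, 1/q}$, in which all $\alpha$-dependence has been moved into the first slot.

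To expand $G$ and $H$ I would then invoke the primitive formula from Remark~\ref{rem:compute}: for any primitive $\tilde\cU$ of $u \in H_{-1}$,
$(u, q')_{-1, 1/q} = \int_\bbS \tilde\cU(1 - c/q)\dd\theta$, with $c := 1/(2\pi I_0^2(2Kr))$. Taking $Q(\cdot+\alpha) - Q(\cdot)$ (with $Q'=q$) as primitive of $T_{-\alpha}q - q$ and Taylor expanding,
\[
G(\alpha)\, =\, \alpha\int_\bbS q\,(1 - c/q)\dd\theta\, +\, \tfrac{\alpha^2}{2}\int_\bbS q'\,(1 - c/q)\dd\theta\, +\, O(\alpha^3)\, =\, \alpha A\, +\, O(\alpha^3),
\]
where $A := (q', q')_{-1, 1/q}$; the crucial cancellation is that the quadratic-in-$\alpha$ coefficient vanishes, because both $\int_\bbS q'\dd\theta$ and $\int_\bbS q'/q\dd\theta = \int_\bbS (\log q)'\dd\theta$ are integrals of derivatives of periodic functions. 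A completely parallel computation for $H$, using $\tilde\cH(\cdot+\alpha)$ (with $\tilde\cH$ any primitive of $h$) together with a change of variables, gives
\[
H(\alpha, h)\, =\, (h, q')_{-1, 1/q}\, -\, c\alpha\,(h, (\log q)'')_{-1, 1/q}\, +\, O\bigl(\alpha^2 \Vert h\Vert_{-1}\bigr),
\]
where the identification of the linear-in-$\alpha$ coefficient as a pairing against $(\log q)''$ uses that $(\log q)'$ is a primitive of $(\log q)''$ already satisfying the normalisation $\int_\bbS (\log q)'/q\dd\theta = 0$ (because $(\log q)'/q = q'/q^2 = -(1/q)'$).

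The final step is to invert $G(\alpha) + H(\alpha, h) = 0$ iteratively. At first order, $\alpha_1 = -(h, q')_{-1, 1/q}/A$; writing $\alpha = \alpha_1 + \alpha_2 + O(\Vert h\Vert_{-1}^3)$ and plugging back in, the vanishing of $G''(0)$ leaves the simple equation $A\alpha_2 - c\,\alpha_1\,(h, (\log q)'')_{-1, 1/q} = O(\Vert h\Vert_{-1}^3)$, from which the announced formula follows after substituting $\alpha_1$. Uniformity of the $O(\Vert h\Vert_{-1}^3)$ remainder on the $\gs$-neighbourhood is inherited from the $C^\infty$ regularity of $\mu \mapsto v(\mu)$ guaranteed by Lemma~\ref{lem:def proj on M}. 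The main point of care is that $h \in H_{-1}$ is in general only a distribution, so all manipulations take place at the level of the primitive formulas of Remark~\ref{rem:compute}; the dependence on the choice of primitive drops out precisely thanks to the identities $\int_\bbS (1 - c/q)\dd\theta = \int_\bbS (\log q)'/q\dd\theta = 0$. The only non-routine ingredient is the cancellation $G''(0) = 0$, and it is exactly what makes the second-order coefficient expressible in terms of $(\log q)''$ alone rather than through a more complicated combination of $q$ and its derivatives.
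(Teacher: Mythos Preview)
Your argument is correct and follows essentially the same route as the paper: both proofs write the defining relation $(q+h-q_{\psi+\alpha},q'_{\psi+\alpha})_{-1,1/q_{\psi+\alpha}}=0$ via the primitive formula of Remark~\ref{rem:compute}, expand to second order, observe that the pure $\alpha^2$ contribution vanishes (you use periodicity, the paper uses parity of $q_\psi(\cdot+\psi)$, which amount to the same cancellations $\int q'=\int q'/q=0$), identify the cross term with $(h,(\log q)'')_{-1,1/q}$, and solve for $\alpha$. Your preliminary use of rotation invariance to freeze the weight at $1/q$ is a clean bookkeeping device, but after your change of variables in $H$ you are computing exactly the same integrals as the paper when it expands $1/q_{\psi+\gep}$ directly.
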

\medskip

\begin{proof}
For $h$ as in the statement we have that
\begin{equation}
\label{eq:pA.1-1}
 ( q_\psi+h-q_{\psi+\gep},q'_{\psi+\gep})_{-1,1/q_{\psi+\gep}}\, =\, 0\, ,
\end{equation}
for $\gep:= p(q_\psi+h)-\psi$. Since $p(\cdot)$ is smooth, we have $\gep=O( \Vert h\Vert_{-1})$.
By expanding 
$q_{\psi+\gep}$ with respect to $\gep$ 
we see that \eqref{eq:pA.1-1} implies
\begin{equation}
\label{eq:pA.1-2}
 \left( h+\gep q'_\psi -\frac{ \gep ^2}{2}q''_\psi,q'_{\psi+\gep }\right)_{-1,1/q_{\psi+\gep}}\, =\, O(\gep ^3)\, .
\end{equation}
Let us rewrite \eqref{eq:pA.1-2} more explicitly (recall Remark~\ref{rem:compute}) as
\begin{equation}
 \int_\bbS \left(\cH (\theta) +\gep  q_\psi(\theta)-\frac{\gep ^2}{2} q'_\psi(\theta)
 \right)\left(1-\frac{1}{2\pi I_0^2(2Kr)}\frac{1}{q_{\psi+\gep }(\theta)}\right)\dd \theta \, =\, O(\gep ^3)\, ,
\end{equation}
where $\cH$ is the primitive of $h$ such that $\int_\bbS \frac{\cH}{q_\psi}=0$. 
At this point 
we expand also  $q_{\psi+\gep }$ with respect to $\gep $ and, using $\gep=O(\Vert h \Vert_{-1})$, the parity of $q_\psi( \cdot+ \psi)$ and Remark~\ref{rem:compute}, we get to
\begin{equation}
 ( h,q'_\psi)_{-1,1/q_\psi}+\gep (q'_\psi,q'_\psi)_{-1,1/q_\psi} +\gep \frac{1}{2\pi I^2_0(2Kr)}( h,(\log q)'')_{-1,1/q_\psi} \, =\, 
 O\left(\Vert h \Vert_{-1} ^3\right)\, .
\end{equation}
Now it suffices to solve this equation for $\gep$ and perform one last Taylor expansion. 
\end{proof}

\section{spectral estimates}
\label{sec:appB}

The aim of this section is to find approximations of  the eigenvalues and eigenfunctions of the operators $L_\psi$
for large eigenvalues. In such a regime we expect the Laplacian to dominate and the spectrum of $L_\psi$
 should get close to the one of the Laplacian (as long as we deal with large eigenvalues). These are standard estimates, developed for example in
  \cite{cf:Nai} that we follow, but we could not find in the literature the result for the  non-local operators we consider. Without loss of generality, we can focus on  $L_0$. We have 
\begin{equation}
\label{eq:def Lq}
 L_0 u \, =\, \frac12 u'' - (uJ*q_0+q_0J*u)'\, =\, \frac12 u'' -(J*q_0) u'- (J*q_0')u-q_0'J*u - q_0J'*u\, .
\end{equation}
We make a change of variable to get rid of the coefficient of order $1$: if we define
\begin{equation}
\label{eq:change variable Lq}
 u\, = \, =\, \sqrt{q_0}y\, ,
\end{equation}
and we observe  that $\sqrt{q}= e^{\tilde J*q_0}$, with $\tilde J(\theta):=K\cos(\theta)$,
 then  we get
\begin{equation}
 u'\, =\, \sqrt{q_0}y'+(J*q_0)\sqrt{q_0}y\, ,
\end{equation}
\begin{equation}
 u''\, =\, \sqrt{q_0}y''+ 2(J*q_0)\sqrt{q_0}y'+(J*q_0')\sqrt{q_0}y+(J*q_0)^2\sqrt{q_0}y\, ,
\end{equation}
and these two last equations together with \eqref{eq:def Lq} give
\begin{multline}
L_{q_0}\sqrt{q_0}y, =\,  \frac12  [\sqrt{q_0}y''+ 2(J*q_0)\sqrt{q_0}y'+(J*q_0')\sqrt{q_0}y+(J*q_0)^2\sqrt{q_0}y]\\ -(J*q_0)[ \sqrt{q_0}y'+(J*q_0)\sqrt{q_0}y]
-(J*q_0')\sqrt{q_0}y-q_0'J*(\sqrt{q_0}y) - q_0J'*(\sqrt{q_0}y)
\end{multline}
which leads, after simplification, to the new operator
\begin{equation}
 \tilde L y\, :=\, \frac12 y''-m(y)\, , 
\end{equation}
where we have set
\begin{equation}
 m(y)\, :=\, \frac12 ((J*q_0)^2 +J*q_0')y+\frac{q_0'}{\sqrt{q_0}}J*(\sqrt{q_0}y)+\sqrt{q_0}J'*(\sqrt{q_0}y)\, .
\end{equation}
Of course $m(y)$ is a function and when we want to make explicit the $\theta$-dependence we use
$m_\theta(y)$.
Since the operator $L_0$ is negative, we are interested in couples $(\rho,y)$ solution of
\begin{equation}
 \tilde L y\, =\,-\rho^2 y \, , 
\end{equation}
where $\rho$ is a positive real number. The method of variation of the parameters shows that such solutions exist (for all $\rho>0$ if we do not restrict the study to the $2\pi$-periodic eigenfunctions of $\tilde l$) and are of the form
\begin{equation}
\label{eq:variation parameter}
 y(\theta)\, =\, c_1 e^{\sqrt{2}\rho i\theta}+c_2 e^{-\sqrt{2}\rho i \theta} - \frac{1}{\sqrt{2}\rho} \int_0^\theta G(\theta,\theta',\rho)m_{\theta'}(y) \dd\theta'
\end{equation}
where
\begin{equation}
 G(\theta,\theta',\rho)\, =\, i e^{\sqrt{2}\rho i(\theta-\theta')} - i e^{-\sqrt{2}\rho i (\theta-\theta')}\, .
\end{equation}
We define $y_1$ the solution such that $c_1=1$, $c_2=0$, and $y_2$ the solution such that $c_1=0$, $c_2=1$. In what
follows we start by getting a first estimate of the eigenfunctions $y_1$ and $y_2$ with respect to $\rho\longrightarrow \infty$. This estimate  implies a first estimate of the eigenvalue $-\gl=-\rho^2$, and this  leads to a new approximation of the eigenfunctions, and thus a new approximation of $-\gl$. This procedure can be repeated recursively, but for us two steps will suffice.

\medskip

\begin{lemma}
\label{lem:first order rho}
For each $\gl>0$, there exist $y_1$ and $y_2$ independent (non necessarily periodic) eigenfunctions of $\tilde L$ associated to $-\gl$ such that (recall that $\rho$=$\sqrt{\gl}$):
 \begin{equation}
  \label{eq:first order y1 rho}
 y_1(\theta,\rho)\, =\, e^{\sqrt{2}\rho i\theta} + O\left(\frac{1}{\rho}\right)\, ,
 \end{equation}
\begin{equation}
 \label{eq:first order yprime1 rho}
 y_2(\theta,\rho)\, =\, e^{-\sqrt{2}\rho i\theta} + O\left(\frac{1}{\rho}\right)\, ,
\end{equation}
\begin{equation}
 y_1'(\theta,\rho)\, =\, \sqrt{2}\rho i e^{\sqrt{2}\rho i\theta} + O(1)\, ,
\end{equation}
\begin{equation}
 y_2'(\theta,\rho)\, =\, -\sqrt{2}\rho i e^{-\sqrt{2}\rho i\theta} + O(1)\, ,
\end{equation}
where $\theta\in [0, 2\pi]$ and $O(\cdot)$ is as $\rho$ tends to infinity
(and we stress that here and below the $O(\cdot)$ term does not depend on $\theta$ or,
equivalently, it is uniform in $\theta \in [0, 2\pi]$).
\end{lemma}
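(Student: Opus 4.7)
The plan is to read \eqref{eq:variation parameter} as a Banach fixed-point equation in $X=C^0([0,2\pi];\bbC)$ with the uniform norm: for $y_1$ I take $c_1=1$, $c_2=0$ and for $y_2$ the converse, producing each $y_i$ as the fixed point of
\[
 T_i(y)(\theta) \, =\, \phi_i(\theta,\rho)\,-\,\frac{1}{\sqrt{2}\rho}\int_0^\theta G(\theta,\theta',\rho)\,m_{\theta'}(y)\,\dd\theta',
\]
with $\phi_1(\theta,\rho)=e^{\sqrt{2}\rho i\theta}$ and $\phi_2(\theta,\rho)=e^{-\sqrt{2}\rho i\theta}$. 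The key observation is that $m:X\to X$ is a bounded linear operator with norm $C_m$ depending only on $K$ and $q_0$ (and \emph{not} on $\rho$): multiplication by $\tfrac12((J*q_0)^2+J*q_0')$ is bounded since $q_0\in C^\infty(\bbS)$, and the nonlocal pieces $\frac{q_0'}{\sqrt{q_0}}\,J*(\sqrt{q_0}\,\cdot)$ and $\sqrt{q_0}\,J'*(\sqrt{q_0}\,\cdot)$ are bounded on $X$ because $J,J'$ and $q_0^{\pm1/2}$ are smooth. Combined with the pointwise estimate $|G(\theta,\theta',\rho)|\leq 2$ this yields
\[
 \|T_i(y)-T_i(\tilde y)\|_\infty \,\leq\, \frac{2\sqrt{2}\,\pi\,C_m}{\rho}\,\|y-\tilde y\|_\infty,
\]
so $T_i$ is a strict contraction for all $\rho\geq\rho_0$ large enough; Banach's theorem produces $y_i$ solving $\tilde L y_i=-\rho^2 y_i$, and a short bootstrap gives $\|y_i\|_\infty\leq 2$ for such $\rho$. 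For $\rho\in(0,\rho_0)$ existence of an independent pair $y_1,y_2$ follows from the same argument applied on sufficiently small subintervals of $[0,2\pi]$ and concatenated; the $O(\cdot)$ bounds, being asymptotic in $\rho$, are insensitive to this range.

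The estimates \eqref{eq:first order y1 rho} and \eqref{eq:first order yprime1 rho} now follow by reading off the fixed-point identity. For the function itself,
\[
 y_1(\theta,\rho)-e^{\sqrt{2}\rho i\theta}\,=\,-\frac{1}{\sqrt{2}\rho}\int_0^\theta G(\theta,\theta',\rho)\,m_{\theta'}(y_1)\,\dd\theta',
\]
which is bounded uniformly in $\theta$ by $(2\sqrt{2}\pi C_m\|y_1\|_\infty)/\rho=O(1/\rho)$; similarly for $y_2$. For the derivative estimates I differentiate the fixed-point equation, noting that the boundary term $G(\theta,\theta,\rho)=0$, so
\[
 y_1'(\theta,\rho)\,=\,\sqrt{2}\rho i\,e^{\sqrt{2}\rho i\theta}\,-\,\frac{1}{\sqrt{2}\rho}\int_0^\theta \partial_\theta G(\theta,\theta',\rho)\,m_{\theta'}(y_1)\,\dd\theta'.
\]
A direct computation gives $|\partial_\theta G(\theta,\theta',\rho)|\leq 2\sqrt{2}\rho$, so that the factor $\rho$ produced by differentiating $G$ is exactly cancelled by the $1/\rho$ prefactor, leaving the integral term $O(1)$; the treatment of $y_2'$ is identical. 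Linear independence of $y_1,y_2$ is immediate from their Wronskian at $\theta=0$, which equals $-2\sqrt{2}\rho i\neq 0$.

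The construction is therefore a textbook application of Banach contraction for large parameter, in the spirit of \cite{cf:Nai}, and the one point deserving attention is the $\rho$-independence of the operator norm of $m$. Since that bound follows directly from the explicit expression for $m$ and the smoothness of $q_0$, no real obstacle is anticipated. The finer expansions of $y_1,y_2$ required later (in the spirit of $y_1=e^{\sqrt{2}\rho i\theta}(1+a(\theta)/\rho+\cdots)$) can be obtained by iterating the same fixed-point identity, substituting the first-order approximation back into the right-hand side.
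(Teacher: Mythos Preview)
Your proof is correct and takes essentially the same approach as the paper: the paper iterates \eqref{eq:variation parameter} into a Neumann series (which is of course equivalent to your Banach contraction), using exactly the same two ingredients---the pointwise bound $|G|\le 2$ and the $\rho$-independent boundedness of $m$ on $C^0$---and then handles the derivative by differentiating the integral equation just as you do. One minor caveat: your subinterval-and-concatenate argument for small $\rho$ does not quite work as stated, because $m$ is genuinely nonlocal (the convolution $J*(\sqrt{q_0}y)$ evaluated at $\theta'$ sees $y$ on all of $[0,2\pi]$), but this is immaterial since the $O(\cdot)$ claims are asymptotic in $\rho$ and the paper likewise treats existence for bounded $\rho$ only in passing.
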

\medskip

\begin{proof}
We prove the result for $y_1$. The proof for $y_2$ is similar. We define
\begin{equation}
\label{eq:def A0}
 A_0(\theta,\theta',v)\, =\, - \frac{1}{\sqrt{2}\rho} G(\theta,\theta',\rho)m_{\theta'}(v)\ind_{\theta'<\theta}
\end{equation}
so that for $\theta\in [0,2\pi]$,
\begin{equation}
\label{eq:y0 with A0}
 y_1(\theta)\, =\,  e^{\sqrt{2}\rho i\theta} + \int_0^{2\pi} A_0(\theta,\theta',y_1) \dd\theta'\, .
\end{equation}
The expression for $y_1$; cf. \eqref{eq:variation parameter}, can be iterated arbitrarily many times and it leads 
to a series expression for $y_1$, at least for $\rho$ sufficiently large. To see this set   $f_0(\theta):=e^{\sqrt{2}\rho i\theta}$ and observe that
\begin{multline}
\label{eq:serie y1}
 y_1(\theta_0)\, =\, f_0(\theta_0)+\sum_{j=1}^{m}\int_0^{2\pi}\cdots\int_0^{2\pi} A_0(\theta_0,\theta_1,A_0(\theta_1,\theta_2,\cdots A_0(\theta_{i-1},\theta_{i},f_0)\cdots))\dd\theta_1\cdots\dd\theta_m \\
+\int_0^{2\pi}\int_0^{2\pi}\cdots\int_0^{2\pi} A_0(\theta_0,\theta_1,A_0(\theta_1,\theta_2,\cdots A_0(\theta_{m},\theta_{m+1},y_1)\cdots))\dd\theta_1\cdots\dd\theta_{m+1} \, .
\end{multline}
One directly verifies that there exists $C=C(K)$ such that for  $\theta,\, \theta'\in [0,2\pi]$, 
\begin{equation}
\label{bound A0}
 |A_0(\theta,\theta',v)|\, \leq\, \frac{C}{\rho}\Vert v \Vert\, , 
\end{equation}
where $\Vert v \Vert:=\sup_{\theta\in[0,2\pi]}|v(\theta)|$.
From \eqref{eq:serie y1} and using $\Vert f_0(\cdot) \vert \equiv 1$ we see that
\begin{equation}
\Vert y_1\Vert \le \, 1+ \sum_{j=1}^m \left( \frac{2\pi C}{\rho}\right)^m 
+\left( \frac{2\pi C}{\rho}\right)^{m +1} \Vert y_1\Vert\, ,
\end{equation}
so for $\rho > 2\pi C$ we see that $\Vert y_1\Vert< \infty$ and we have a series expression for $y_1$, from which
 we directly obtain \eqref{eq:first order y1 rho}.

To deal with $y_1'$ we take the derivative of both sides of
 \eqref{eq:variation parameter} with $c_1=1$ and  $c_2=0$, so that
\begin{equation}
 y'_1(\theta)\, =\, \sqrt{2} \rho i e^{\sqrt{2}\rho i \theta} - \frac{1}{\sqrt{2}\rho}\int_0^\theta \partial_\theta G(\theta,\theta',\rho)m_{\theta'}(y_1)\dd\theta'\, .
\end{equation}
We define the new kernel 
\begin{equation}
 A_1(\theta,\theta',v)\, :=\, -\frac{1}{\sqrt{2}\rho^2}\partial_\theta G(\theta,\theta',\rho)m_{\theta'}(v)\ind_{\theta'<\theta}\, ,
\end{equation}
so we can write
\begin{equation}
 \frac{1}{\rho} y'_1(\theta) \, =\, \sqrt{2} i e^{\sqrt{2}\rho i \theta} + \int_0^{2\pi} A_1(\theta,\theta',y_1) \dd\theta'\, .
\end{equation}
Also $A_1$ verifies 
\begin{equation}
\label{bound A1}
 |A_1(\theta,\theta',v)|\, \leq\, \frac{C}{\rho}\sup_{\theta\in[0,2\pi]}|v(\theta)|\, , 
\end{equation}
for a suitable $C=C(K)$ 
and the same argument as above gives 
\begin{equation}
  \frac{1}{\rho} y'_1(\theta) \, =\, \sqrt{2} i e^{\sqrt{2}\rho i \theta} + O\left(\frac1\rho\right)\, , 
\end{equation}
which is equivalent to \eqref{eq:first order yprime1 rho}.
\end{proof}

\medskip

\begin{lemma}
\label{lem:gl square root}
There exists $l_0\in\bbN$ such that for all $p\in \bbN$ the eigenvalues of $L_0$ satisfy
\begin{equation}
 \gl_{l_0+2p}\, =\, \frac{p^2}{2}+O(\sqrt{p})\, ,
\end{equation}
\begin{equation}
 \gl_{l_0+2p+1}\, =\, \frac{p^2}{2}+O(\sqrt{p})\, .
\end{equation}
\end{lemma}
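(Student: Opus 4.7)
Thanks to the change of variable $u=\sqrt{q_0}\,y$ performed earlier in this appendix, the eigenvalue equation $L_0 u = -\gl u$ becomes $\tilde L y = -\rho^2 y$ with $\rho=\sqrt{\gl}>0$, whose space of (not necessarily periodic) solutions on $\bbR$ is two-dimensional and spanned by $y_1(\cdot,\rho)$ and $y_2(\cdot,\rho)$ from Lemma~\ref{lem:first order rho}. A linear combination $y = c_1 y_1 + c_2 y_2$ descends to a $2\pi$-periodic $C^1$ eigenfunction iff $(c_1,c_2)$ belongs to the kernel of the Floquet matrix
\[
 M(\rho)\,:=\,\begin{pmatrix} y_1(2\pi,\rho)-y_1(0,\rho) & y_2(2\pi,\rho)-y_2(0,\rho) \\ y_1'(2\pi,\rho)-y_1'(0,\rho) & y_2'(2\pi,\rho)-y_2'(0,\rho)\end{pmatrix}.
\]
From \eqref{eq:variation parameter} one reads $y_1(0,\rho)=y_2(0,\rho)=1$ and $y_1'(0,\rho)=\sqrt{2}\,\rho\, i=-y_2'(0,\rho)$; plugging in the asymptotics of Lemma~\ref{lem:first order rho}, a direct expansion of the determinant yields
\[
 D(\rho)\,:=\,\det M(\rho)\,=\,-8\sqrt{2}\,i\,\rho\,\sin^2\!\left(\pi\sqrt{2}\,\rho\right)\,+\, O\!\left(|\sin(\pi\sqrt{2}\,\rho)|\right)\,+\,O(1/\rho),
\]
uniformly in $\rho$ large. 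So $\gl=\rho^2$ is an eigenvalue of $-L_0$ iff $D(\rho)=0$.

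Next, I would extract the location of the zeros of $D$. Setting $s:=|\sin(\pi\sqrt{2}\,\rho)|$, the equation $D(\rho)=0$ forces $8\sqrt{2}\,\rho\, s^2 \le C(s+1/\rho)$, whence $s=O(1/\rho)$ and therefore $\rho$ must lie within $O(1/n)$ of $n/\sqrt{2}$ for some $n\in\bbN$. Squaring, any sufficiently large eigenvalue $\gl$ of $-L_0$ belongs to an interval of the form $n^2/2+O(1)$, which is comfortably stronger than the $O(\sqrt{p})$ claimed.

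The remaining and delicate step is the indexing: one has to identify $l_0$ independent of $p$ and show that $\gl_{l_0+2p}$ and $\gl_{l_0+2p+1}$ are precisely the two members of the cluster near $p^2/2$. I would handle this via a Rouch\'e-type argument on $D$ viewed as an entire function of the complex variable $\rho$. The leading term has a zero of order exactly two at each $\rho=n/\sqrt{2}$ and is bounded below by a positive constant on a small circle of fixed radius $\eta$ around that point; a complexified version of Lemma~\ref{lem:first order rho} shows that the remainder terms in $D(\rho)$ remain $O(1)$ and $O(1/n)$ respectively on such circles once $n$ is large enough, so Rouch\'e yields exactly two zeros of $D$ (with multiplicity) inside each such disc. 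Together with the a priori localization $s=O(1/\rho)$ (which rules out zeros outside the discs), and with the fact, guaranteed by the self-adjointness and compactness of the resolvent of $L_0$ proven in \cite{cf:BGP}, that $(\gl_l)_{l\ge 0}$ is a monotone unbounded sequence, the pairing between eigenvalues and zeros of $D$ is forced and delivers the announced indexing. The main obstacle is precisely this counting argument: the real-line estimate $s=O(1/\rho)$ has to be promoted to a quantitative comparison of $D$ with its leading part on complex circles shrinking like $1/n$, which in turn requires one to propagate the bounds \eqref{bound A0}--\eqref{bound A1} through the fixed-point expansion of $y_1,y_2$ for complex $\rho$ in a neighborhood of the clusters $\{n/\sqrt{2}\}$.
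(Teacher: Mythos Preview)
Your proposal is correct and follows essentially the same route as the paper: both set up the $2\times 2$ periodicity determinant for the fundamental solutions $y_1,y_2$ of Lemma~\ref{lem:first order rho} and expand it via those asymptotics to localize $\sqrt{\gl}$ near $k/\sqrt{2}$. The only difference is that you make the counting/indexing step explicit through a Rouch\'e argument (and extract the slightly sharper $O(1)$ instead of $O(\sqrt{p})$), whereas the paper dispatches that point in one sentence (``Reciprocally, all $\rho$ satisfying~\eqref{eq:approx rho square root} satisfies~\eqref{eq:det periodicity eigenvector}, so the Lemma follows'').
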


\medskip

\begin{rem}
\label{rem:eigenvalues and eigenfunction expansion}
An immediate consequence of Lemma~\ref{lem:gl square root}
and of the basic properties of $L_0$ is that there exist $C>1$ such that for $j=0,1, \ldots$.
\begin{equation}
\frac{j^2}C \, \le \, \gl_{j} \, \le \, C j^2\, .
\end{equation}
\end{rem}

\medskip

\begin{proof}
Let $y_1$ and $y_2$ the eigenfunctions of $\tilde L$ given by Lemma \ref{lem:first order rho} associated to the eigenvalue $-\gl=-\rho^2$. As a linear combination of $y_1$ and $y_2$ is $2\pi$-periodic, the following determinant is equal to zero:
\begin{equation}
\label{eq:determinant exact}
 \left|
\begin{array}{cc}
 y_1(2\pi)-y_1(0) & y_2(2\pi)-y_2(0) \\
 y'_1(2\pi)-y'_1(0) &y'_2(2\pi)-y'_2(0) 
\end{array}
\right| \, =\, 0\, .
\end{equation}
Lemma \ref{lem:first order rho} implies
\begin{equation}
\label{eq:det periodicity eigenvector}
 \left|
\begin{array}{cc}
 e^{2\sqrt{2}\pi\rho i}-1+O\left(\frac1\rho\right) & e^{-2\sqrt{2}\pi\rho i}-1+O\left(\frac1\rho\right) \\
 \sqrt{2}\rho i (e^{2\sqrt{2}\pi\rho i}-1)+O(1) &-\sqrt{2}\rho i (e^{-2\sqrt{2}\pi\rho i}-1) +O(1)
\end{array}
\right| \, =\, 0\, ,
\end{equation}
and thus we get
\begin{equation}
 |e^{2\sqrt{2}\pi\rho i}-1|^2=O\left(\frac1\rho\right)\, .
\end{equation}
We deduce that there exits $k\in \bbN$ such that 
\begin{equation}
\label{eq:approx rho square root}
 \rho=\frac{k}{\sqrt{2}}+O\left(\frac{1}{\sqrt{k}}\right)\, .
\end{equation}
Reciprocally, all $\rho$ satisfying \eqref{eq:approx rho square root} satisfies \eqref{eq:det periodicity eigenvector}, so the Lemma follows.
\end{proof}

\medskip

\begin{proposition}
\label{prop:eigenvalues and eigenfunction expansion}
There exists $l_0\in \bbN$ such that for all $p\in \bbN$ the eigenvalues of $L_0$ satisfy
\begin{equation}
 \gl_{l_0+2p}\, =\, \frac{p^2}{2} -\frac{K^2r^2}{8}+O\left(\frac{1}{p}\right)\, ,
\end{equation}
\begin{equation}
\gl_{l_0+2p+1}\, =\, \frac{p^2}{2} -\frac{K^2r^2}{8}+O\left(\frac{1}{p}\right)\, ,
\end{equation}
and any eigenfunction of $L_0$ associated to $\gl_{l_0+2p}$ or $\gl_{l_0+2p+1}$ is, up to a correction of order $1/p^2$, a linear combination
 of the two functions $q_0^{1/2}v_{1,l_0+p}$ and $q_0^{1/2}v_{2,l_0+p}$, where 
\begin{equation}
\label{eq:2eqAB}
\begin{split}
v_{1,l_0+p}(\theta)\, &=\,\cos(p\theta)-\frac{\sin(p\theta)}{p}\left[\frac{Kr}{2}\sin(\theta)+\frac{K^2r^2}{8}\sin(2\theta)\right]\, ,
\\
  v_{2,l_0+p}(\theta)\, &=\, \sin(p\theta)+\frac{\cos(p\theta)}{p} \left[\frac{Kr}{2}\sin(\theta)+\frac{K^2r^2}{8}\sin(2\theta)\right]\, .
  \end{split}
\end{equation}
\end{proposition}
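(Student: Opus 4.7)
The plan is to push the Volterra-iteration argument of Lemmas~\ref{lem:first order rho}--\ref{lem:gl square root} one order further. I substitute the crude approximation $y_1(\theta)=e^{\sqrt{2}\rho i\theta}+O(1/\rho)$ back into \eqref{eq:variation parameter}, obtaining sharper expressions for $y_1(2\pi),y_1'(2\pi)$ and their counterparts for $y_2$; feeding these into \eqref{eq:determinant exact} and solving that condition to one further order in $1/\rho$ will identify both the eigenvalue correction and the explicit form of the eigenfunctions.

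The key observation at the new iteration is that the nonlocal pieces of $m$, namely $(q_0'/\sqrt{q_0})J*(\sqrt{q_0}\,\cdot)$ and $\sqrt{q_0}J'*(\sqrt{q_0}\,\cdot)$, applied to $e^{\sqrt{2}\rho i\cdot}$, are $O(\rho^{-N})$ for every $N$: $\sqrt{q_0}$ is real analytic, so its Fourier coefficients decay super-polynomially, and after the shift by $\sqrt{2}\rho$ only these tail coefficients survive. Thus only the local multiplier $V(\theta):=\tfrac12\bigl((J*q_0)^2+J*q_0'\bigr)$ contributes at order $1/\rho$. Using $G(\theta,\theta',\rho)=ie^{\sqrt{2}\rho i(\theta-\theta')}-ie^{-\sqrt{2}\rho i(\theta-\theta')}$, the $e^{\sqrt{2}\rho i\theta}$-component of the integral yields $(\int_0^\theta V)/(\sqrt{2}\rho i)$ after simplification, while the $e^{-\sqrt{2}\rho i\theta}$-component carries a fast counter-oscillation $e^{2\sqrt{2}\rho i\theta'}$ whose integration by parts produces only $O(1/\rho^2)$. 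The net result should be
\begin{equation*}
 y_1(\theta)\,=\,e^{\sqrt{2}\rho i\theta}\Bigl[1-\frac{i}{\sqrt{2}\rho}\int_0^\theta V(\theta')\dd\theta'\Bigr]+O(1/\rho^2),
\end{equation*}
together with the analogous formula for $y_2$ obtained via $\rho\mapsto-\rho$, and corresponding expressions for $y_1',y_2'$ after differentiation.

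Setting $\alpha:=e^{2\sqrt{2}\pi\rho i}-1$, which by Lemma~\ref{lem:gl square root} is $O(1/\rho)$, I would expand \eqref{eq:determinant exact} keeping all terms up to $O(1/\rho^2)$. The leading $O(1)$ entries cancel by a Wronskian-type identity (the Volterra iterates $y_1$ and $y_2$ are approximate solutions of the same second-order equation), and the surviving balance reduces to $\alpha=2\pi i\langle V\rangle/(\sqrt{2}\rho)+O(1/\rho^2)$ with $\langle V\rangle:=(2\pi)^{-1}\int_{\bbS}V$. Writing $\alpha=2\pi i(\sqrt{2}\rho-p)+O(\alpha^2)$ for the integer $p$ such that $\sqrt{2}\rho-p=O(1/\rho)$, I solve $\sqrt{2}\rho=p+\langle V\rangle/p+O(1/p^2)$ and square to obtain $\rho^2=p^2/2+\langle V\rangle+O(1/p)$; an explicit evaluation of $\int_{\bbS}V$ in terms of $K$ and $r$ then yields the constant in the Proposition. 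For the eigenfunctions, taking the two natural null combinations of $(c_1,c_2)$ corresponding to the real and imaginary parts of the WKB phase $\exp\bigl(i\sqrt{2}\rho\theta-i(\sqrt{2}\rho)^{-1}\int_0^\theta V\bigr)$ and expanding to order $1/p$, after absorbing the linear-in-$\theta$ secular part of $\int V$ into the corrected wavenumber $\sqrt{2}\rho\simeq p$, reproduces $v_{1,l_0+p}$ and $v_{2,l_0+p}$ of \eqref{eq:2eqAB}. Reverting the change of variable \eqref{eq:change variable Lq} through multiplication by $\sqrt{q_0}$ then gives the claimed form of the eigenfunctions of $L_0$.

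The main obstacle is the careful cancellation in \eqref{eq:determinant exact}: each of the four entries has $O(1)$ and $O(1/\rho)$ pieces, and one must check that the $O(1)$ contributions to the determinant annihilate exactly so that the $O(1/\rho)$ correction controlled by $V$ is what actually drives the quantization; this is the step where the identification of the correct constant in the eigenvalue expansion takes place. A secondary but routine difficulty is ensuring the $O(\rho^{-N})$ control of the nonlocal part is uniform in $\theta\in[0,2\pi]$; this follows from the super-polynomial decay of the Fourier coefficients of $\sqrt{q_0}$ together with the crude bound \eqref{bound A0} applied during iteration.
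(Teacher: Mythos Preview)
Your strategy is the same as the paper's (iterate \eqref{eq:variation parameter} once more, discard the nonlocal part of $m$ by Bessel decay, and read off the quantization from the determinant \eqref{eq:determinant exact}), and your WKB expression for $y_1$ is the right shape. But there is a genuine gap in the error control.

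You write that $\alpha=e^{2\sqrt2\pi\rho i}-1$ is $O(1/\rho)$ by Lemma~\ref{lem:gl square root}. It is not: that lemma gives $\sqrt2\rho=k+O(1/\sqrt k)$, hence only $\alpha=O(1/\sqrt\rho)$. This matters because the determinant \eqref{eq:determinant exact}, after your expansion, is not linear in $\alpha$; it is essentially
\[
|\alpha|^2-\frac{\sqrt2}{\rho}\,\Im\bigl(\alpha H_1\bigr)+\frac{1}{\rho^2}\bigl(|H_1|^2-|H_2|^2\bigr)=O(1/\rho^3),
\]
with $H_1=\int_0^{2\pi}e^{-\sqrt2\rho i\theta}m_\theta(e^{\sqrt2\rho i\cdot})\dd\theta=2\pi\langle V\rangle+O(1/\sqrt\rho)$ real to leading order. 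With only $\alpha=O(1/\sqrt\rho)$ the quadratic term $|\alpha|^2$ is of the same order $O(1/\rho)$ as the linear one, so you cannot collapse the equation to $\alpha=2\pi i\langle V\rangle/(\sqrt2\rho)+O(1/\rho^2)$ in one stroke. The paper handles this by a bootstrap: a first pass through the quadratic yields $\sqrt2\rho=k+O(1/k)$, a second pass gives $\sqrt2\rho=k+\tfrac{K^2r^2}{4k}+O(1/k^{3/2})$, and a third, using one further Volterra iterate, pins the remainder down to $O(1/k^2)$. Your sketch skips this cascade and therefore does not justify either the $O(1/p)$ in the eigenvalue expansion or the $O(1/p^2)$ in the eigenfunction expansion.

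A smaller point: your description of the determinant as having ``$O(1)$ entries that cancel by a Wronskian identity'' does not match the actual structure. After dividing the derivative row by $\sqrt2\rho i$, all four entries are $\alpha+O(1/\rho)$ or $\bar\alpha+O(1/\rho)$; nothing $O(1)$ survives, and no Wronskian cancellation is needed. What drives the answer is solving the resulting quadratic in $\alpha$, which is exactly where the bootstrap above enters.
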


From Proposition~\ref{prop:eigenvalues and eigenfunction expansion} one can directly extract some important conclusions:
let us give them before the proof of the proposition. 
\medskip

\begin{cor}
\label{cor: unitary eigenfunction expansion}
There exists $l_0 \in \bbN$ such that for all $p\in \bbN$ and $\psi\in \bbS$, the unitary (in $H_{-1,1/q_{\psi}}$) eigenfunctions $e_{\psi,l_0+2p}$ and $e_{\psi,l_0+2p+1}$ of $L_\psi$ are up to a correction of order $1/p$ a bounded (with respect to $p$) linear combination of $\theta\mapsto pq_\psi^{1/2}(\theta) v_{1,l_0+p}(\theta-\psi)$ and $\theta\mapsto pq_\psi^{1/2}(\theta) v_{2,l_0+p}(\theta-\psi)$ (see Proposition~\ref{prop:eigenvalues and eigenfunction expansion} for the definition of $v_{1,l}$ and $v_{2,l}$).
\end{cor}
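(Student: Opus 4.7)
The plan is to use rotation invariance to reduce to $\psi=0$, and then combine Proposition~\ref{prop:eigenvalues and eigenfunction expansion} with a short Gram matrix computation in $H_{-1,1/q_0}$ to pin down the $p$-scaling of the normalisation factor. Indeed, a direct change of variables in the convolution in \eqref{eq:Lq} gives $L_\psi = R_\psi L_0 R_{-\psi}$ where $R_\psi u(\theta):=u(\theta-\psi)$, and the same substitution shows that $R_\psi$ is an isometry from $H_{-1,1/q_0}$ onto $H_{-1,1/q_\psi}$. Hence the unitary eigenfunctions satisfy $e_{\psi,j}(\theta)=e_{0,j}(\theta-\psi)$ and it suffices to work at $\psi=0$.

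Set $\phi_i:=q_0^{1/2}v_{i,l_0+p}$. Two integrations by parts, exploiting the analyticity of $q_0^{1/2}=\exp(Kr\cos(\cdot))/\sqrt{2\pi I_0(2Kr)}$, give a primitive of $q_0^{1/2}(\theta)\cos(p\theta)$ in the form $q_0^{1/2}(\theta)\sin(p\theta)/p+O(1/p^2)$ uniformly in $\theta$; the $1/p$-order corrections in the definition \eqref{eq:2eqAB} of $v_{i,l_0+p}$ contribute only $O(1/p^2)$ to such a primitive after one further integration by parts. Using Remark~\ref{rem:norecenter} to bypass the centring condition, one obtains $\Vert\phi_i\Vert_{-1,1/q_0}^2=\pi/p^2+O(1/p^3)$ (from $\int_\bbS\sin^2(p\theta)\dd\theta=\pi$), while $(\phi_1,\phi_2)_{-1,1/q_0}=O(1/p^3)$ because $\int_\bbS\sin(p\theta)\cos(p\theta)\dd\theta=0$. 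Thus the Gram matrix of $\{p\phi_1,p\phi_2\}$ in $H_{-1,1/q_0}$ is $\pi I_2+o(1)$, invertible for $p$ large.

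For $j\in\{l_0+2p,l_0+2p+1\}$, Proposition~\ref{prop:eigenvalues and eigenfunction expansion} furnishes $\alpha_j,\beta_j$ and a remainder $r_j$ with $e_{0,j}=\alpha_j\phi_1+\beta_j\phi_2+r_j$ and $\Vert r_j\Vert_\infty=O(1/p^2)$, hence $\Vert r_j\Vert_{-1,1/q_0}=O(1/p^2)$ by the continuous injection of $L^2$ into $H_{-1,1/q_0}$. Imposing $\Vert e_{0,j}\Vert_{-1,1/q_0}=1$, expanding the square, and using the Gram matrix estimate together with the Cauchy--Schwarz bounds $|(\phi_i,r_j)_{-1,1/q_0}|=O(1/p^3)$ yields $|\alpha_j|^2+|\beta_j|^2=p^2/\pi+O(1)$. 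Setting $\alpha_j=pa_j$ and $\beta_j=pb_j$, the coefficients $a_j,b_j$ are therefore bounded uniformly in $p$, and the resulting expansion $e_{0,j}=a_j(p\phi_1)+b_j(p\phi_2)+r_j$ has remainder of order $1/p^2$ in $H_{-1,1/q_0}$, which is in particular $O(1/p)$. Rotating back by $\psi$ completes the proof.

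The delicate point will be confirming that the error in Proposition~\ref{prop:eigenvalues and eigenfunction expansion} can be read in sup norm (or $L^2$), which is what a proof via Lemma~\ref{lem:first order rho} and \eqref{eq:change variable Lq} should naturally provide; once this is in hand, the remainder is a matter of explicit oscillatory-integral estimates made clean by the analyticity of $q_0$.
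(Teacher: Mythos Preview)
Your approach mirrors the paper's: reduce to $\psi=0$ by the rotation isometry and then compute the $H_{-1,1/q_0}$-size of the leading-order approximation $q_0^{1/2}v_{i,l_0+p}$ in order to pin down the normalization. Your Gram-matrix computation is correct and is in fact slightly more systematic than the paper's version, which tacitly uses the even/odd splitting of the eigenfunctions to treat a single $v_i$ at a time; your constant $\pi/p^2$ differs from the paper's (which seems to drop the factor $q_0^{1/2}$ when passing from Proposition~\ref{prop:eigenvalues and eigenfunction expansion} to the norm computation), but both give the $1/p$ scaling that is all that is needed.

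There is, however, a circularity in your handling of the remainder. Proposition~\ref{prop:eigenvalues and eigenfunction expansion} yields $\Vert r\Vert_\infty=O(1/p^2)$ for the \emph{unnormalized} representative coming from the construction (sup norm $O(1)$), not for the unit-norm $e_{0,j}$; once you multiply by the normalization factor---which you are in the process of showing is of order $p$---the remainder becomes $O(1/p)$, not $O(1/p^2)$. The paper sidesteps this by writing $e_{0,j}=c_p\cdot(\text{natural representative})$ and computing the $H_{-1,1/q_0}$-norm of the bracket directly, then solving for $c_p$. Your argument is easily repaired the same way, or equivalently by using the scale-invariant form of the remainder, $\Vert r_j\Vert_\infty=O\big((|\alpha_j|+|\beta_j|)/p^2\big)$: the unit-norm condition then reads $1=(\alpha_j^2+\beta_j^2)\big(\pi/p^2+O(1/p^3)\big)$, which still forces $\alpha_j^2+\beta_j^2\sim p^2/\pi$, and the corollary follows with remainder $O(1/p)$ as stated.
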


\medskip
\noindent
{\it Proof.} We set $\psi=0$ without loss of generality.
Proposition~\ref{prop:eigenvalues and eigenfunction expansion} tells us that the normalized eigenfunctions of $L_0$
can be written either as
\begin{equation}
\label{eq:eigenAB}
c_p \left(
\cos(p\theta)-\frac{\sin(p\theta)}{p}\left[\frac{Kr}{2}\sin(\theta)+\frac{K^2r^2}{8}\sin(2\theta)\right] + r_p(\theta) \right)
\end{equation}
where $r_p(\theta)= O(1/p^2)$ and $c_p$ is the normalizing constant,
or with the analogous expression coming from the second line in \eqref{eq:2eqAB} (but we will deal only
with \eqref{eq:eigenAB} because the other case is treated analogously). To estimate $c_p$
let us observe that the first two addends in \eqref{eq:eigenAB} are in $H^{-1}$ (since they are smooth,
it suffices to remark that their integral from $0$ to $2\pi$ is zero), so $r_p\in H_{-1}$, since
the eigenfunction is: of course $r_p$ is smooth, since the eigenfunction is. Now we claim that
the $H_{-1,1/q}$ norm of $\cos (p\cdot)$, that is the first addendum, is proportional to $1/p$, apart for a correction 
that is beyond all orders in $1/p$, while the norm of the two other terms is $O(1/p^2)$.
In fact if we set $u(\theta):= \cos(p\theta)$, then $\cU(\theta)= \sin (p\theta) /p$
so
\begin{equation}
\Vert u \Vert _{-1,1/q}\, =\, \frac 1p \sqrt{\int_\bbS \frac{1-\cos(2p\theta)}{2 q(\theta)}}\dd \theta\, .
\end{equation} 
If we use the standard estimate 
\begin{equation}
\label{eq:neglectable local terms 2}
 I_k(x)\,=\, \frac{1}{2\pi}\int_0^{2\pi}\cos(k\theta)e^{x\cos(\theta)}\dd\theta\, =\, \sum_{m=0}^\infty\frac{1}{m!\gG(m+k+1)}\left(\frac{x}{2}\right)^{2m+k}\, \leq\, \frac{C_x}{(k!)^{1/2}}\, .
\end{equation}
we readily see that
\begin{equation}
\int_\bbS \frac{\cos(2p\theta)}{ q(\theta)}\dd \theta\, = \, 
O\left( \frac 1 {\sqrt{(2p)!}}\right)\, .
\end{equation}
On the other hand
\begin{equation}
\int_\bbS \frac{1}{q(\theta)}\dd \theta\, =\, \left( 2\pi I_0(2Kr) \right)^2\,,
\end{equation}
so $\Vert u \Vert _{-1,1/q}$ is equal to $c(K) /p$, $c(K):=\sqrt{2}\pi I_0(2Kr)$, up to a correction
that decays faster than any power of $1/p$. 

For the second addendum it suffices to observe that it can be rewritten as
a linear combination of terms of  $\cos(p' \theta)$, with $\vert p-p'\vert=1$ and $2$.
But then the computation is very similar to the one that we have done for the first addendum
(or, easier, one can explicitly compute the $H_-1$ norm, without weight). Therefore this term
is $O(1/p^2)$.

For the third addendum we recall that $\vert r_p(\theta) \vert \le C /p^2$, so that if we set
$\cR (\theta):= \int_0^\theta r_p(\theta')\dd \theta'$, we have $\Vert \cR (\theta) \vert \le C \theta/p^2$.
Of course $\cR$ is not necessarily centered, but, by using Remark~\ref{rem:norecenter},
we see that $\Vert r_p \Vert_{-1}\le 2C^2 \pi^2 /p^2$. 

By collecting the estimates of the three addends we see that
\begin{equation}
c_p \, =\, c(K) p \left( 1+ O(1/p)\right)\, ,
\end{equation} 
and this completes the proof of Corollary~\ref{cor: unitary eigenfunction expansion}.
\qed

\medskip

By putting  Corollary \ref{cor: unitary eigenfunction expansion} and \eqref{eq:link e and f} together we obtain
\medskip

\begin{cor}
\label{cor:fj}
With $\{f_j\}_{j=0,1, \ldots}$ defined as in Appendix~\ref{sec:A_H}, we have
$\sup_j \Vert f_j' \Vert_\infty < \infty$ and $\sup_j \Vert f_j'' \Vert_\infty /j <\infty$. 
\end{cor}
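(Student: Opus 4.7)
The plan is to extract both bounds from the relation $\mathtt{A}_q f_j = e_j$ together with the asymptotic description of $e_j$ provided by Corollary~\ref{cor: unitary eigenfunction expansion}. By rotation invariance (Remark~\ref{rem:Lpsi}) it suffices to work at $\psi=0$, and since each individual $f_j$ is smooth, the task reduces to controlling the regime $j\to\infty$. From \eqref{eq:link e and f} we have $f_j' = -\cE_j/q_0$, where $\cE_j$ is a primitive of $e_j$; since $q_0$ is smooth and bounded away from $0$ on $\bbS$, the first claim $\sup_j \Vert f_j'\Vert_\infty<\infty$ reduces to showing $\sup_j \Vert \cE_j\Vert_\infty < \infty$.

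For the large-$j$ regime, writing $j=l_0+2p$ or $j=l_0+2p+1$, Corollary~\ref{cor: unitary eigenfunction expansion} combined with \eqref{eq:2eqAB} gives
\begin{equation*}
 e_j(\theta)\, =\, p\, q_0^{1/2}(\theta)\bigl[a_j\cos(p\theta)+b_j\sin(p\theta)\bigr] + q_0^{1/2}(\theta)\, g_{j}(\theta) + R_j(\theta),
\end{equation*}
with $|a_j|+|b_j|$ uniformly bounded, $g_j$ a uniformly bounded linear combination of $\sin(p\theta)$ and $\cos(p\theta)$ with coefficients themselves trigonometric polynomials, and $\Vert R_j\Vert_\infty = O(1/p)$. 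In particular $\Vert e_j\Vert_\infty = O(j)$. A single integration by parts yields
\begin{equation*}
 \int_0^\theta p\, q_0^{1/2}(s)\cos(p s)\,\dd s\, =\, q_0^{1/2}(\theta)\sin(p\theta)-q_0^{1/2}(0)\sin(0)-\int_0^\theta (q_0^{1/2})'(s)\sin(p s)\, \dd s,
\end{equation*}
and the right-hand side is uniformly $O(1)$ (actually $O(1)+O(1/p)$ after a further integration by parts); the $\sin(ps)$ piece is treated identically. The $g_j$ contribution integrates to $O(1/p)$ by a single integration by parts, while the primitive of $R_j$ is bounded by $2\pi\Vert R_j\Vert_\infty = O(1/p)$. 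Summing these contributions gives $\Vert\cE_j\Vert_\infty = O(1)$, hence $\sup_j\Vert f_j'\Vert_\infty<\infty$.

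For the second claim, expanding $\mathtt{A}_q f_j = e_j$ produces
\begin{equation*}
 f_j''(\theta)\, =\, -\frac{q_0'(\theta)f_j'(\theta)+e_j(\theta)}{q_0(\theta)},
\end{equation*}
so $\Vert f_j''\Vert_\infty\le C\bigl(\Vert f_j'\Vert_\infty+\Vert e_j\Vert_\infty\bigr)=O(1)+O(j)$, giving the second bound. The main obstacle is essentially bookkeeping: one must verify that the only contribution of size $p$ to $e_j$ is a pure $\cos(p\theta)$ or $\sin(p\theta)$ oscillation multiplied by the smooth factor $q_0^{1/2}$, so that a single integration by parts produces the crucial $p^{-1}$ cancellation needed to pass from $\Vert e_j\Vert_\infty = O(j)$ to $\Vert \cE_j\Vert_\infty = O(1)$. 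This structural cancellation is exactly what Corollary~\ref{cor: unitary eigenfunction expansion} together with the explicit form \eqref{eq:2eqAB} of $v_{1,\cdot}$ and $v_{2,\cdot}$ makes available.
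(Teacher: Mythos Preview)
Your proof is correct and follows the same approach as the paper. The paper's own argument is extremely terse: it records the identities $f_j'=-\cE_j/q$ and $f_j''=-e_j/q+\cE_j q'/q^2$ and then simply points back to the normalization computation in the proof of Corollary~\ref{cor: unitary eigenfunction expansion}, where the primitive of $\cos(p\,\cdot)$ is seen to be $O(1/p)$, so that after multiplying by the normalization $c_p\sim p$ one gets $\Vert\cE_j\Vert_\infty=O(1)$. You have written out exactly this mechanism---the single integration by parts against the smooth factor $q_0^{1/2}$ that converts the leading $p\,q_0^{1/2}(\theta)\cos(p\theta)$ contribution into an $O(1)$ primitive---in explicit detail, and your handling of the lower-order pieces $g_j$ and $R_j$ is also correct. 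The formula you use for $f_j''$ agrees with the paper's after substituting $f_j'=-\cE_j/q_0$.
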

\medskip

\noindent
{\it Proof.} From \eqref{eq:link e and f}, see also the discussion right after that, we see that
$f'_j= -\cE_j/q$ and $f''_j = -e_j/q + \cE_j  q' /q^2$. Where $e_j$ is the $j^{\mathrm{th}}$
(normalized) eigenvector. Taking into account the normalization, see proof of Corollary~\ref{cor: unitary eigenfunction expansion}, the claim is readily proven. 
\qed

\medskip

\noindent{\it Proof of Proposition~\ref{prop:eigenvalues and eigenfunction expansion}.}
Injecting \eqref{eq:first order y1 rho} in the integral term of \eqref{eq:y0 with A0} leads to 
\begin{multline}
\label{eq:second order y1 rho}
 y_1(\theta)\, =\, e^{\sqrt{2}\rho i\theta}-\frac{1}{\sqrt{2}\rho}\bigg[i e^{\sqrt{2}\rho i\theta}\int_0^\theta e^{-\sqrt{2}\rho i\theta'}m_{\theta'}(e^{\sqrt{2}\rho i\cdot})\dd\theta'\\
-i e^{-\sqrt{2}\rho i\theta}\int_0^\theta e^{\sqrt{2}\rho i\theta'}m_{\theta'}(e^{\sqrt{2}\rho i\cdot})\dd\theta'\bigg]
+O\left(\frac{1}{\rho^2}\right) \, .
\end{multline}
Similarly, we obtain
\begin{multline}
\label{eq:second order y1 rho2}
 y_1'(\theta)\, =\, \sqrt{2}\rho ie^{\sqrt{2}\rho i\theta}+\bigg[ e^{\sqrt{2}\rho i\theta}\int_0^\theta e^{-\sqrt{2}\rho i\theta'}m_{\theta'}(e^{\sqrt{2}\rho i\cdot})\dd\theta'\\+e^{-\sqrt{2}\rho i\theta}\int_0^\theta e^{\sqrt{2}\rho i\theta'}m_{\theta'}(e^{\sqrt{2}\rho i\cdot})\dd\theta'\bigg]
+O\left(\frac{1}{\rho}\right)\, ,
\end{multline}
and similar expressions for $y_2$ and $y'_2$, which actually are just the complex conjugate of $y_1$
and $y_1'$.
We define
\begin{equation}
 H_1\, =\, \int_0^{2\pi} e^{-\sqrt{2}\rho i\theta'}m_{\theta'}(e^{\sqrt{2}\rho i\cdot})\dd\theta'\, ,
 \ \ \ 
 H_2\, =\, \int_0^{2\pi} e^{\sqrt{2}\rho i\theta'}m_{\theta'}(e^{\sqrt{2}\rho i\cdot})\dd\theta' \ \ \text{ and } \ \
 \gO\, =\, e^{2\sqrt{2}\pi\rho i}\, .
\end{equation}
With 
the higher estimates \eqref{eq:second order y1 rho} and \eqref{eq:second order y1 rho2}, 
we see that \eqref{eq:determinant exact} becomes
\begin{equation}
\label{eq:determinant order 2}
\left|
\begin{array}{cc}
 \gO-1 -\frac{1}{\sqrt{2}\rho}\left[i \gO H_1-i \bar{\gO} H_2\right] +O\left(\frac{1}{\rho^2}\right) &  
 \bar{\gO}-1 -\frac{1}{\sqrt{2}\rho}\left[-i \bar{\gO}\bar{H_1}+i \gO \bar{H_2}\right] +O\left(\frac{1}{\rho^2}\right) \\
\gO-1-\frac{1}{\sqrt{2}\rho}\left[i \gO H_1+i \bar{\gO} H_2\right] +O\left(\frac{1}{\rho^2}\right) &  
 -\bar{\gO}+1 -\frac{1}{\sqrt{2}\rho}\left[i \bar{\gO}\bar{H_1}+i \gO \bar{H_2}\right] +O\left(\frac{1}{\rho^2}\right)                                                    
\end{array}
\right|\, =\, 0\, , 
\end{equation}
which implies
\begin{equation}
\label{eq:expansion determinant im}
|\gO-1|^2-\frac{\sqrt{2}}{\rho}\Im( (\gO-1)H_1)\, =\, O\left(\frac{1}{\rho^2}\right)\, .
\end{equation}
We now use the expansion of $\rho$ given by \eqref{eq:approx rho square root}. In particular, the $O(1/\rho^2)$ above becomes a $O(1/k^2)$. The second term of the left hand side above is of order $1/k^2$. In fact, we get the first order of $H_1$ :
\begin{equation}
 H_1\, =\, \int_0^{2\pi} e^{-k i\theta'}m_{\theta'}(e^{k i\cdot})\dd\theta'+O\left(\frac{1}{\sqrt{k}}\right)\, ,
\end{equation}
where the non local terms in the integral are negligible, since we have 
\begin{equation}
\label{eq:neglectable local terms}
 J*(\sqrt{q_0}e^{ki\cdot})(\theta)\, =\, \frac{iK}{2(2\pi I_0(2Kr))^{1/2}}(e^{i\theta}I_{k-1}(Kr)-e^{-i\theta}I_{k+1}(Kr))
\end{equation}
and we can apply \eqref{eq:neglectable local terms 2}.
A similar bound apply for $J'*(\sqrt{q_0}e^{ki\cdot})$. So it remains the (real !!) first order (remark that $J*q_0(\cdot)
=-Kr\sin(\cdot)$):
\begin{equation}
\label{eq:expansion H1}
 H_1 =\, \int_0^{2\pi} \frac12 ((J*q_0)^2 +J*q_0')(\theta')\dd\theta'+O\left(\frac{1}{\sqrt{k}}\right)\, =\, \frac{\pi K^2r^2}{2}+O\left(\frac{1}{\sqrt{k}}\right)\, .
\end{equation}
But since (using \eqref{eq:approx rho square root})
\begin{equation}
\label{eq:expansion expo}
 \gO-1\, =\, 2\pi i (\sqrt{2} \rho-k)+O\left(\frac{1}{k}\right)\, ,
\end{equation}
where the first term of the right hand side is of order $1/\sqrt{k}$, we have improved the result of Lemma \ref{lem:gl square root}, since using \eqref{eq:expansion determinant im}, \eqref{eq:expansion H1}, \eqref{eq:expansion expo} and \eqref{eq:approx rho square root} we obtain
\begin{equation}
 |e^{2\sqrt{2}\pi\rho i}-1|^2-\frac{2\pi^2K^2r^2}{k}(\sqrt{2}\rho-k)=O\left(\frac{1}{k^2}\right)
\end{equation}
which implies
\begin{equation}
\label{eq:expansion rho 1 over k}
 \sqrt{2}\rho\, =\, k +O\left(\frac1k\right)\, . 
\end{equation}
Taking \eqref{eq:expansion rho 1 over k} into account,  \eqref{eq:determinant order 2}  yields
\begin{equation}
\label{eq:det order 3}
 |\gO-1|^2- \frac{2}{k}\Im( (\gO-1)H_1)+\frac{1}{k^2}(|H_1|^2-|H_2|^2)\, =\, O\left(\frac{1}{k^3}\right)\, .
\end{equation}
The non local terms in $H_2$ are negligible as for $H_1$ (see above) and a direct calculation shows that the local terms are of order $1/k$, so from \eqref{eq:det order 3}, \eqref{eq:expansion H1} and \eqref{eq:expansion expo} we get
\begin{equation}
(\sqrt{2}\rho-k)^2 -\frac{K^2r^2}{2k}(\sqrt{2}\rho-k)+\frac{K^4r^4}{16k^2}\, =\, \left(\sqrt{2}\rho-k-\frac{K^2r^2}{4k}\right)^2\, =\, O\left(\frac{1}{k^3}\right)\, ,
\end{equation}
which implies
\begin{equation}
\label{eq:second order rho plusmoins}
 \sqrt{2}\rho\, =\, k+\frac{K^2r^2}{4}\frac1k +O\left(\frac{1}{k^{3/2}}\right) \, .
\end{equation}
We now go further in the expansion to prove that the $O(1/k^{3/2})$ in \eqref{eq:second order rho plusmoins} is in fact a $O(1/k^2)$.
Using \eqref{eq:serie y1}, we get the the second order expansion of $y_1$ (recall \eqref{eq:def A0} and $f_0=e^{\sqrt{2}\rho i\cdot}$)
\begin{equation}
\label{eq:expansion y1 rho cube}
 y_1(2\pi)\, =\,\gO+\int_0^{2\pi}A_0(2\pi,\theta_1,f_0)\dd \theta_1 +\int_0^{2\pi}\int_0^{2\pi} A_0(2\pi,\theta_1,A_0(\theta_1,\theta_2,f_0))\dd\theta_1\dd\theta_2+O\left(\frac{1}{\rho^3}\right)\, .
\end{equation}
From \eqref{eq:expansion rho 1 over k}, we deduce
\begin{multline}
 \int_0^{2\pi}A_0(\theta_1,\theta_2,f_0)\dd\theta_2\, =\, -\frac{1}{\sqrt{2}\rho}\Bigg[ie^{\sqrt{2}\rho i\theta_1}\int_0^{\theta_1}e^{-\sqrt{2}\rho i\theta_2}m_{\theta_2}(e^{\sqrt{2}\rho i \cdot})\dd\theta_2\\
-ie^{-\sqrt{2}\rho i\theta_1}\int_0^{\theta_1}e^{\sqrt{2}\rho i\theta_2}m_{\theta_2}(e^{\sqrt{2}\rho i \cdot})\dd\theta_2 \Bigg]\\
=\, -\frac{i}{k}\Bigg[e^{ki\theta_1}\int_0^{\theta_1}e^{-ki\theta_2}m_{\theta_2}(e^{ki\cdot})\dd\theta_2-e^{-ki\theta_1}\int_0^{\theta_1}e^{ki\theta_2}m_{\theta_2}(e^{ki\cdot})\dd\theta_2\Bigg]+O\left(\frac{1}{k^2}\right)\, ,
\end{multline}
and since the non local terms are negligible  (see \eqref{eq:neglectable local terms}), we get
\begin{equation}
\label{eq:expansion A0}
  \int_0^{2\pi}A_0(\theta_1,\theta_2,f_0)\dd\theta_2\, =\,\frac{iKre^{ki\theta_1}}{2k}\left(\sin \theta_1 +\frac{Kr}{4}\sin(2\theta_1)-\frac{Kr}{2}\theta_1\right)+O\left(\frac{1}{k^2}\right)\, .
\end{equation}
We deduce the following expansion for the third term of the right hand side of \eqref{eq:expansion y1 rho cube}:
\begin{multline}
\label{eq:expansion A0A0}
 \int_0^{2\pi}\int_0^{2\pi}A_0(2\pi,\theta_1,A_0(\theta_1,\theta_2,f_0))\dd\theta_1\dd\theta_2\, 
 =
 \\ \frac{Kr}{2k^2}\Bigg(\gO\int_0^{2\pi} e^{-ki\theta_1}m_{\theta_1}\Bigg[e^{ki\cdot}\Big(\sin\cdot+\frac{Kr}{4}\sin(2\cdot)-\frac{Kr}{2}\cdot\Big)\Bigg]\dd\theta_1 
 \\
     -\bar{\gO}\int_0^{2\pi} e^{ki\theta_1}m_{\theta_1}\Bigg[e^{ki\cdot}\Big(\sin\cdot+\frac{Kr}{4}\sin(2\cdot)-\frac{Kr}{2}\cdot\Big)\Bigg]\dd\theta_1 \Bigg)+O\left(\frac{1}{k^3}\right)\, .
\end{multline}
Using similar arguments as before, we get to
\begin{equation}
 \int_0^{2\pi} e^{-ki\theta_1}m_{\theta_1}\left(e^{ki\cdot}\Big(\sin(\cdot)+\frac{Kr}{4}\sin(2\cdot)\Big)\right)\dd\theta_1\, =\, O\left(\frac{1}{k^3}\right)\, ,
\end{equation}
\begin{equation}
 \int_0^{2\pi} e^{ki\theta_1}m_{\theta_1}\left(
 e^{ki\cdot}\Big(\sin(\cdot)+\frac{Kr}{4}\sin(2\cdot)\Big)\right)\dd\theta_1\, =\, O\left(\frac{1}{k^3}\right)\, .
\end{equation}
Moreover, the non local terms of $m_{\theta_1}(e^{ki\cdot}\cdot)$ are of order $1/k$. In fact, these non local terms are finite sums of the form
\begin{equation}
\label{eq:non local + linear}
 \int_0^{2\pi}e^{mi\theta}\sqrt{q_0}(\theta)\theta\dd\theta\, ,
\end{equation}
where $|m|$ is included in $[k-1,k+1]$, and it is easy to see that since the Fourier coefficients of $\sqrt{q_0}$ decay very quickly (see \eqref{eq:neglectable local terms 2}), \eqref{eq:non local + linear} is of order $1/k$. So \eqref{eq:expansion A0A0} becomes
\begin{equation}
 \int_0^{2\pi}\int_0^{2\pi}A_0(2\pi,\theta_1,A_0(\theta_1,\theta_2,f_0))\dd\theta_1\dd\theta_2\, =\, -\frac{K^4r^4\pi^2}{8k^2}\gO +\left(\frac{1}{k^3}\right)\, ,
\end{equation}
and we deduce from \eqref{eq:expansion y1 rho cube}
\begin{equation}
 y_1(2\pi)-y_1(0)\, =\, \gO-1-\frac{i}{k}(\gO H_1 -\bar{\gO}H_2)-\frac{K^4r^4\pi^2}{8k^2}\gO +O\left(\frac{1}{k^3}\right)\, .
\end{equation}
Similarly, we obtain
\begin{equation}
 \frac{y'_1(2\pi)-y'_1(0)}{\sqrt{2}\rho i}\, =\, \gO-1-\frac{i}{k}(\gO H_1+i\bar{\gO}H_2)-\frac{K^2r^2\pi^2}{8k^2}\gO+ O\left(\frac{1}{k^3}\right)\, .
\end{equation}
Using these new estimates, \eqref{eq:determinant order 2} becomes
\begin{equation}
\label{eq:determinant order 3}
\left|
\begin{array}{cc}
 \gO-1 -\frac{1}{\sqrt{2}\rho}\left[i \gO H_1-i \bar{\gO} H_2\right]  &  
 \bar{\gO}-1 -\frac{1}{\sqrt{2}\rho}\left[-i \bar{\gO}\bar{H_1}+i \gO \bar{H_2}\right]  \\
 -\frac{K^4r^4\pi^2}{8k^2}\gO +O\left(\frac{1}{k^3}\right) & -\frac{K^4r^4\pi^2}{8k^2}\bar{\gO} +O\left(\frac{1}{k^3}\right) \\
\gO-1-\frac{1}{\sqrt{2}\rho}\left[i \gO H_1+i \bar{\gO} H_2\right] &  
 -\bar{\gO}+1 -\frac{1}{\sqrt{2}\rho}\left[i \bar{\gO}\bar{H_1}+i \gO \bar{H_2}\right] \\
  -\frac{K^4r^4\pi^2}{8k^2}\gO +O\left(\frac{1}{k^3}\right) & +\frac{K^4r^4\pi^2}{8k^2}\bar{\gO}+O\left(\frac{1}{k^3}\right)                                                 
\end{array}
\right|\, =\, 0\, ,
\end{equation}
which leads to
\begin{multline}
\label{eq:det order 4}
 |\gO-1|^2- \frac{\sqrt{2}}{\rho}\Im( (\gO-1)H_1)+\frac{1}{k^2}(|H_1|^2-|H_2|^2)+\frac{K^4r^4\pi^2}{8k^2}\left(4-2\gO-2\bar{\gO}\right)\\
+\frac{K^4r^4\pi^2}{2k^3}\Im(H_1) \, =O\, \left(\frac{1}{k^4}\right)\, .
\end{multline}
The last term of \eqref{eq:det order 4} is of order $1/k^4$ since using \eqref{eq:expansion rho 1 over k} we get
\begin{equation}
\label{eq:expansion gO order 2}
 \gO\, =\, 1+i 2\pi (\sqrt{2}\rho-k)+O\left(\frac{1}{k^2}\right)\, .
\end{equation}
Moreover  using \eqref{eq:expansion rho 1 over k} we have
\begin{multline}
\label{eq:expansion H1 k2}
 H_1\, =\, \int_0^{2\pi}e^{-ki\theta} m_{\theta}(e^{ki\cdot})\dd\theta +i(\sqrt{2}\rho-k)\Bigg(-\int_0^{2\pi}e^{ki\theta}\theta m_\theta(e^{ki\cdot})\dd\theta \\
+\int_0^{2\pi}e^{ki\theta}m_\theta(e^{ki\cdot}\cdot)\dd\theta\Bigg)+O\left(\frac{1}{k^2}\right)\, .
\end{multline}
As before, the non local terms of $m_\theta(e^{ki\cdot}\cdot)$ are of order $1/k$, so the last two integrals in \eqref{eq:expansion H1 k2} are equal up to a correction of order $1/k$, and thus (recall \eqref{eq:expansion H1} for the first order term), using \eqref{eq:expansion rho 1 over k},
\begin{equation}
 H_1\, =\, \frac{\pi K^2r^2}{2}+O\left(\frac{1}{k^2}\right)\, .
\end{equation}
We deduce that the first term of the second row of \eqref{eq:det order 4} is of order $1/k^4$, and that, using \eqref{eq:expansion gO order 2},
\begin{equation}
\frac{\sqrt{2}}{\rho}\Im((\gO-1)H_1)\, =\, \frac{2\pi^2K^2r^2}{k}(\sqrt{2}\rho-k)+O\left(\frac{1}{k^4}\right)\, ,
\end{equation}
and
\begin{equation}
 \frac{1}{k^2}|H_1|^2\, =\, \frac{\pi^2 K^4 r^4}{4k^2}+O\left(\frac{1}{k^4}\right)\, .
\end{equation}
Since $|H_2|$ is of order $1/k$ and that \eqref{eq:expansion gO order 2} implies
\begin{equation}
 |\gO-1|^2\, =\, 4\pi^2(\sqrt{2}-\rho)^2+O\left(\frac{1}{k^4}\right)\, ,
\end{equation}
\eqref{eq:det order 4} becomes
\begin{equation}
(\sqrt{2}\rho-k)^2 -\frac{K^2r^2}{2k}(\sqrt{2}\rho-k)+\frac{K^4r^4}{16k^2}\, =\, O\left(\frac{1}{k^4}\right)\, ,
\end{equation}
and we deduce
\begin{equation}
\label{eq:second order rho k2}
 \sqrt{2}\rho\, =\, k+\frac{K^2r^2}{4}\frac{1}{k}+O\left(\frac{1}{k^2}\right)\, .
\end{equation}
Now we are able to get a second expansion of the eigenvectors: using \eqref{eq:second order y1 rho}, \eqref{eq:expansion A0} and \eqref{eq:second order rho k2}, we get the following expansion for $y_1$
\begin{equation}
 y_1(\theta)\, =\, e^{ki\theta}\left(1+\frac{Kri}{2k}\sin(\theta)+\frac{K^2r^2i}{8k}\sin(2\theta)\right)+O\left(\frac{1}{k^2}\right)\,
\end{equation}
and $y_2$ is the complex conjugate. So if we define $w_{1}$ and $w_{2}$ the real and imaginary parts, we get
\begin{align}
w_{1}(\theta)\, =\,& \cos(k\theta)-\frac{\sin(k\theta)}{k}\left(\frac{Kr}{2}\sin\theta +\frac{K^2r^2}{8}\sin(2\theta)\right)+O\left(\frac{1}{k^2}\right)\, , \\
w_{2}(\theta)\, =\,& \sin(k\theta)+\frac{\cos(k\theta)}{k}\left(\frac{Kr}{2}\sin\theta +\frac{K^2r^2}{8}\sin(2\theta)\right)+O\left(\frac{1}{k^2}\right) \, .
\end{align}
Therefore the proof of Proposition \ref{prop:eigenvalues and eigenfunction expansion} is complete.
\qed

\section*{Acknowledgments}
G. G. acknowledges the support of ANR, grants SHEPI and ManDy, and 
the support of the Petronio Fellowship Fund at the Institute for Advanced Study
(Princeton, NJ) where part of this research has been conducted. 
C. P. acknowledges the support of FSMP.

\end{document}